\def\Image#1{\raisebox{-.5\height}{\includegraphics[scale=0.7]{#1}}}
\newtheorem{mythm}{Theorem}[section]
\newtheorem*{mythm*}{Theorem}
\newtheorem{myprop}[mythm]{Proposition}
\newtheorem*{myprop*}{Proposition}
\newtheorem{mylemma}[mythm]{Lemma}
\newtheorem{mycor}[mythm]{Corollary}
\newtheorem*{mycor*}{Corollary}
\theoremstyle{definition}
\newtheorem{mydef}[mythm]{Definition}
\newtheorem{myrem}[mythm]{Remark}
\newtheorem{myex}[mythm]{Example}
\newtheorem{myquestion}[mythm]{Question}
\newcommand\note[1]{\adjustbox{bgcolor=yellow,minipage=[t]{\linewidth}}{\textbf{\Large #1}}}
\newcommand\lsub[1]{{}_{#1}\hspace{-0.25mm}}
\newcommand{\C}{\mathbb{C}}
\newcommand{\Z}{\mathbb{Z}}
\newcommand{\K}{\mathbb{K}}
\newcommand{\R}{\mathbb{R}}
\renewcommand{\t}{\mathfrak{t}}
\renewcommand{\b}{\mathfrak{b}}
\DeclareMathOperator{\Hom}{\mathrm{Hom}}
\DeclareMathOperator{\gldim}{\mathrm{gldim}}
\DeclareMathOperator{\DD}{\mathsf{D}}
\title[Jacobian Algebras of Species with Potentials and $2$-Representation Finite Algebras]{Jacobian Algebras of Species with Potentials and $2$-Representation Finite Algebras}
\author{Christoffer S\"oderberg}
\begin{document}
\begin{abstract}
	We study $2$-representation finite $\K$-algebras obtained from tensor products of tensor algebras of species. In earlier work \cite{soderberg2024mutation} we computed the higher preprojective algebra of said algebras to be given as Jacobian algebras of certain species with potential $(S, W)$, which are self-injective and finite dimensional. Truncating these Jacobian algebras yields a rich source of $2$-representation finite $\K$-algebras. Under suitable assumptions, we prove that the set of all cuts of $(S, W)$ is transitive under successive cut-mutations. Furthermore, we show that cuts and cut-mutation correspond to truncated Jacobian algebras and $2$-APR tilting, respectively. Consequently, under certain assumptions, all truncated Jacobian algebras are related to each other via $2$-APR tilting. We produce various new examples of $2$-representation finite $\K$-algebras.
\end{abstract}

\maketitle
\tableofcontents

\section{Introduction}
A finite dimensional algebra $\Lambda$ is called representation finite if there are only finitely many non-isomorphic finitely generated indecomposable $\Lambda$-modules. Gabriel \cite{gabriel1973indecomposable} showed that the path algebra of a connected quiver is representation finite if and only if its underlying diagram is a Dynkin diagram of type ADE. Later this was generalised to species by Dlab and Ringel \cite{dlab1975algebras}, who showed that the tensor algebra of a connected species is representation finite if and only if its underlying diagram is a Dynkin diagram of ABCDEFG. For studying the module category in these cases we use the preprojective algebra introduced by \cite{Gelfand1979ModelAA}, and later by Dlab and Ringel \cite{Dlab_1980} in the species case. It is well known that the preprojective algebra $\Pi(S)$ of a species $S$ does not depend on the orientation $S$. Each orientation of $S$ corresponds to a natural grading on $\Pi(S)$, such that the degree zero part in this grading is isomorphic to the tensor algebra of $S$.

In the paper \cite{APR1979} the notion of APR tilting was introduced. Performing APR tilting on a representation finite algebra yields another representation finite algebra. Let $S$ be a representation finite species over a quiver $Q$ with diagram $\Delta$. Then performing APR tilting on a representation finite species at a sink in $Q$ turns into a source and vice versa. Since the underlying quiver is a tree, we can obtain every possible orientation in $\Delta$. The description provided by \cite{Dlab_1980} says that every orientation of $Q$ yields a grading for the preprojective algebra. Thus, for species, APR tilting corresponds to a change of grading on the preprojective algebra. This means that if $S$ and $S'$ are both representation finite species such that $\Pi(S) = \Pi(S')$, then $T(S')$ can be retrieved by applying successive APR tilting on $T(S)$.

We now turn our focus towards the higher analogue of representation finite algebras, introduced by Iyama and Oppermann in \cite{IO11nRFalgandnAPRtilt}, called $d$-representation finite algebras. An algebra $\Lambda$ with $\gldim \Lambda \le d$ is $d$-representation finite if there exists a $d$-cluster tilting $\Lambda$-module. Iyama and Oppermann introduced the operation $d$-APR tilting, which is a generalisation of the ordinary APR tilting, and proved that it preserves $d$-representation finiteness. In particular, for $d=2$ this means that $2$-APR tilting preserves $2$-representation finiteness. In contrast to the case $d=1$ there does not exist any known classification of $d$-representation finite algebra for $d\ge 2$, not even for $d=2$. In \cite{IO13Stablecateofhigherpreproj} Iyama and Oppermann introduced the $(d+1)$-preprojective algebra $\Pi_{d+1}(\Lambda)$ as the higher analogue of the preprojective algebra whenever $\gldim \Lambda \le d$. Iyama and Oppermann proved that $\Lambda$ is $d$-representation finite if and only if $\Pi_{d+1}(\Lambda)$ is self-injective. In this paper we provide a proof that $d$-APR tilting on $\Lambda$ corresponds to a change of grading of $\Pi_{d+1}(\Lambda)$.

A partial classification of $2$-representation finite algebras over an algebraically closed field is due to Herschend and Iyama \cite{HerschendOsamu2011quiverwithpotential} using quivers with potentials. A quiver with potential is a pair $(Q, W)$ of a quiver $Q$ and a potential $W$, i.e. a sum of cycles in the path algebra of $Q$. Given a quiver with potential $(Q, W)$ we define the Jacobian algebra $\mathcal{P}(Q, W)$ as the path algebra of $Q$ modulo the ideal generated by all derivations of $W$. A quiver with potential is said to be self-injective if its corresponding Jacobian algebra is a finite dimensional self-injective algebra. A set of arrows $C$ in $Q$ is called a cut if it induces a grading on the arrows such that $W$ is homogeneous of degree $1$. For a cut $C$ the truncated Jacobian algebra $\mathcal{P}(Q, W, C)$ is defined as the path algebra over $Q_C$ modulo the ideal generated by all derivations of $W$ by elements from $C$, where $Q_C$ is the quiver obtained from $Q$ by removing all arrows in $C$. In \cite{HerschendOsamu2011quiverwithpotential} it was proved that every basic $2$-representation finite algebra over an algebraically closed field appears as a truncated Jacobian algebra of a self-injective quiver with potential. In this case we have that $\Pi_3(\mathcal{P}(Q, W, C)) \cong \mathcal{P}(Q, W)$. Herschend and Iyama also showed that performing $2$-APR tilting on $\mathcal{P}(Q, W, C)$ yields $\mathcal{P}(Q, W, C')$, where $C'$ is obtained from $C$ by performing an operation called cut-mutation, yielding a rich source of $2$-representation finite algebras. Furthermore, they also gave sufficient conditions for when all cuts are transitive under successive cut-mutation. Moreover, cut-mutation corresponds to a change of grading on $\mathcal{P}(Q, W)$.

In this paper we aim to generalise this correspondence between $2$-APR tilting and cut-mutation. We also provide sufficient conditions when all cuts are transitive under successive cut-mutation. For this we study a more general version of Jacobian algebras $\mathcal{P}(S, W)$ constructed using species with potential $(S, W)$, studied by \cite{Nquefack2012PotentialsJacobian}. We assume throughout the article that $\K$ is a perfect field. Let $(S, W)$ be a species with potential where $S$ is a species over a quiver $Q$ and $W$ a potential. Similarly, to the quiver case, we define cuts $C$ and truncated Jacobian algebras $\mathcal{P}(S, W, C)$. Our main result is a generalisation of \cite[Theorem 8.7]{HerschendOsamu2011quiverwithpotential}.

\begin{mythm*}(Theorem~\ref{theorem - jacobian 2apr tils of each other})
	Let $(S, W)$ be a self-injective simply connected species with potential with enough cuts. Assume that $(S, W)$ has a preprojective cut. Then all cuts $C$ are preprojective and the corresponding truncated Jacobian algebras $\mathcal{P}(S, W, C)$ are iterated $2$-APR tilts of each other. In particular, each $\mathcal{P}(S, W, C)$ is $2$-representation finite.
\end{mythm*}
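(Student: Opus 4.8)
The plan is to run an induction along cut-mutations, propagating the preprojective property from the given cut to every cut while translating each single cut-mutation into a $2$-APR tilt of the associated truncated Jacobian algebra. First I would record the structural inputs established earlier in the paper: under the standing hypotheses (simply connected, enough cuts) the set of cuts of $(S,W)$ is transitive under successive cut-mutations, and cut-mutation of a cut $C$ to $C'$ corresponds to $2$-APR tilting of $\mathcal{P}(S,W,C)$ into $\mathcal{P}(S,W,C')$. I would also invoke the result of this paper that $2$-APR tilting is realised as a change of grading on the third preprojective algebra $\Pi_3$, so that $\Pi_3$ is preserved up to isomorphism under a single $2$-APR tilt.

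Next I would set up the base case. By hypothesis there is a preprojective cut $C_0$; by definition this means $\mathcal{P}(S,W,C_0)$ has global dimension $2$ and $\Pi_3(\mathcal{P}(S,W,C_0)) \cong \mathcal{P}(S,W)$. Since $(S,W)$ is self-injective, $\mathcal{P}(S,W)$ is finite dimensional and self-injective, so by the Iyama--Oppermann criterion ($\Lambda$ is $2$-representation finite if and only if $\Pi_3(\Lambda)$ is self-injective) the algebra $\mathcal{P}(S,W,C_0)$ is $2$-representation finite. This is precisely what licenses the application of $2$-APR tilting at $C_0$.

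For the inductive step, suppose $C$ is a preprojective cut and $C'$ is obtained from $C$ by a single cut-mutation. The correspondence gives that $\mathcal{P}(S,W,C')$ is the $2$-APR tilt of $\mathcal{P}(S,W,C)$, and in particular again has global dimension $2$. Invariance of $\Pi_3$ under $2$-APR tilting then yields $\Pi_3(\mathcal{P}(S,W,C')) \cong \Pi_3(\mathcal{P}(S,W,C)) \cong \mathcal{P}(S,W)$, so $C'$ is again preprojective and the induction may continue. Feeding this step into transitivity, every cut $C$ is reached from $C_0$ by a finite chain of cut-mutations, each step being a $2$-APR tilt (and each reversible, so the relation is symmetric); hence every cut is preprojective and the algebras $\mathcal{P}(S,W,C)$ are iterated $2$-APR tilts of one another. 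Finally, since $\Pi_3(\mathcal{P}(S,W,C)) \cong \mathcal{P}(S,W)$ is self-injective for every cut, the Iyama--Oppermann criterion gives the $2$-representation finiteness of each $\mathcal{P}(S,W,C)$.

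The main obstacle I anticipate is verifying that the preprojective property genuinely propagates, i.e. that the $2$-APR tilt of $\mathcal{P}(S,W,C)$ is again the truncated Jacobian algebra $\mathcal{P}(S,W,C')$ of the mutated cut and that its $\Pi_3$ is unchanged. This rests on the identification of cut-mutation with $2$-APR tilting together with the grading interpretation of $\Pi_3$. The delicate point is that $2$-APR tilting a priori presupposes that the starting algebra has global dimension $2$; consequently the argument must be threaded as an induction in which each step first certifies global dimension $2$ (via the preprojective property inherited from the previous step) before the tilt is carried out, rather than assuming global dimension $2$ for all cuts simultaneously.
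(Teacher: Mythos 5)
Your strategy coincides with the paper's own proof: start at the given preprojective cut, deduce $2$-representation finiteness of $\mathcal{P}(S,W,C_0)$ from self-injectivity of $\mathcal{P}(S,W)\cong\widehat{\Pi}_3(\mathcal{P}(S,W,C_0))$ via the Iyama--Oppermann criterion, propagate the preprojective property along single cut-mutations through the cut-mutation/$2$-APR correspondence (Theorem~\ref{theorem - cut and apr correspondence}, which rests on the grading-shift description of $\widehat{\Pi}_3$ in Proposition~\ref{proposition - degree shift for preprojective}), and then use transitivity of cut-mutation to reach every cut.

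There is, however, one genuine gap. You invoke transitivity of the set of cuts under successive cut-mutations as something that follows from the standing hypotheses ``simply connected $+$ enough cuts'' alone. The transitivity results actually available (Theorem~\ref{theorem - qwc fully, covered enough implies trans} and Theorem~\ref{theorem - simply con implies trans}) require in addition that the quiver with cycles is \emph{covered}, i.e.\ that every arrow of $Q_1$ appears in some cycle of $Q_2$: coveredness is what guarantees that $(Q,C)$ is sufficiently cyclic in the sense of Herschend--Iyama, which is an essential hypothesis of the transitivity theorem being quoted. Coveredness is \emph{not} among the hypotheses of the statement you are proving, so it must be derived, and this is exactly the step your argument is missing. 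The paper obtains it from self-injectivity: by Proposition~\ref{proposition - every cut is alg}, for a self-injective $(S,W)$ every arrow of $Q$ appears at least once in $Q_2$ (an arrow $\beta$ outside $Q_2$ would force $\partial_{\overline{\beta^*},\underline{\alpha^*}}W=0$, contradicting exactness of the defining sequence of the self-injective Jacobian algebra). Without this step, your appeal to transitivity is to a theorem whose hypotheses have not been verified; with it, your induction goes through and reproduces the paper's proof.
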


Here a cut $C$ is said to be preprojective if $\gldim \mathcal{P}(S, W, C)\le 2$ and there is an isomorphism of complete graded algebras $\varphi: \Pi_3(\mathcal{P}(S, W, C)) \cong \mathcal{P}(S, W)$ with $\mathcal{P}(S, W)$ graded by $g_C$ such that $\varphi$ is the identity on the degree $0$ part. In the quiver case, all cuts for self-injective quivers with potentials are preprojective due to \cite{Keller2011CYcompletions}, see also \cite[Proposition 3.9]{HerschendOsamu2011quiverwithpotential}. By earlier work \cite{soderberg2022preprojective} we have a complete list of species for $S^1$ and $S^2$ such that $\Lambda=T(S^1)\otimes_\K T(S^2)$, i.e. the tensor product of tensor algebras of species, becomes a $2$-representation finite $\K$-algebra. In \cite{soderberg2024mutation} we showed that $\Lambda$ is not necessarily basic, but it is Morita equivalent to a basic $\K$-algebra $\Lambda'$. Thus we replace $\Lambda$ by $\Lambda'$ whenever $\Lambda$ is not a basic $\K$-algebra, in order to perform $2$-APR tilting. We prove in this case that there is a self-injective simply connected species with potential $(S, W)$ that has enough cuts such that $\Pi_3(\Lambda) = \mathcal{P}(S, W)$. We show that there exists a cut $C$ such that $\mathcal{P}(S, W, C)\cong \Lambda$, so in particular $C$ is a preprojective cut. Thus the above theorem applies and every truncated Jacobian algebra corresponding to a cut is a $2$-representation finite algebra.

The outline of the proof of our main theorem goes as follows. We introduce the notion of quivers with cycles, which is defined to be a quiver $Q$ together with a set of cycles $Q_2$. Every species with potential $(S, W)$ gives rise to a quiver with cycles, where $Q_2$ is given as the support of $W$. In particular, cuts can be defined on the level of quiver with cycles. We define an operation called cut-mutation following \cite{HerschendOsamu2011quiverwithpotential}. We give sufficient conditions on quivers with cycles for successive cut-mutations to act transitively on all cuts. Then the main theorem follows by showing that cut-mutation corresponds to $2$-APR tilting. Note that for any cut $C'$ reached by $2$-APR tilting we have that $\Pi_3(\mathcal{P}(S, W, C')) \cong \mathcal{P}(S, W)$ and $\mathcal{P}(S, W, C')$ is $2$-representation finite. Now the transitivity of cut-mutation implies that all truncated Jacobian algebras are $2$-representation finite.

Let us also demonstrate an example. We will refer to Example~\ref{ex - example for intro} for more details. We obtain a species with potential $(S, W)$, which is computed by taking the tensor product of two species of type $A_3$ and $B_2$ respectively, and is described by the following diagram.
\begin{equation*}
	\begin{tikzcd}[row sep = 0.6cm, column sep = 1cm]
		\C \arrow[r, "\C"] \arrow[dd, "\C"] & \C \arrow[dd, "\C"] & \C \arrow[l, "\C"'] \arrow[dd, "\C"] \\ \\
		\R \arrow[r, "\R"] & \R \arrow[luu, "\C"'] \arrow[ruu, "\C"'] & \R \arrow[l, "\R"']
	\end{tikzcd}
\end{equation*}
The support of the potential $W$ consists of all $3$-cycles. By earlier work \cite{soderberg2022preprojective} and \cite{soderberg2024mutation}, the Jacobian algebra $\mathcal{P}(S, W)$ is equal to $\Pi_3(\mathcal{P}(S, W, C))$ where $C$ is the cut consisting of the two diagonal arrows. Applying our main theorem now implies that any other cut yields a $2$-representation finite algebra. For example, we have the following $2$-representation finite algebra
\begin{equation*}
	\begin{tikzcd}[row sep = 0.6cm, column sep = 1cm]
		\C \arrow[r, "\C"] \arrow[dd, "\C"] & \C \arrow[dd, dotted] & \C \arrow[l, "\C"'] \arrow[dd, dotted] \\ \\
		\R \arrow[r, dotted] & \R \arrow[luu, "\C"'] \arrow[ruu, "\C"'] & \R \arrow[l, "\R"']
	\end{tikzcd}
\end{equation*}
where the chosen cut is represented by the dotted arrows and the relations given by derivation of the potential by the elements at the cut. \\

\noindent\textbf{Acknowledgements.} The author wishes to thank his supervisor Martin Herschend for all his guidance and helpful discussions while writing this article.

\subsection{Outline}
In Section~\ref{Section - prel} we establish preliminaries on species with potential and higher preprojective algebras. In Section~\ref{section - quiver with cycles} we define quivers with cycles, which are the underlying structures of a species with potentials. We define cuts for quivers with cycles and cut-mutation. We provide sufficient conditions for a quiver with cycles such that all cuts are transitive under successive cut-mutation. In Section~\ref{section - tensor products of quiver with cycles} we define quivers with cycles related to preprojective algebras of tensor products. We provide a set of example species with potential described using tensor products where all cuts are transitive under successive cut-mutation. In Section~\ref{section - truncated jacobian algebras} we define truncated Jacobian algebras. We introduce the notion of algebraic cuts for species with potentials. We prove that all cuts for every self-injective species with potential are algebraic. In Section~\ref{Section - Preproj, 2apr and cutmutation} we show that cut-mutation corresponds to $2$-APR tilting and present the main theorem of this article. We compute various examples of $2$-representation finite $\K$-algebras by applying our main theorem.

\section{Preliminaries}\label{Section - prel}
We will use the same setup and conventions as in \cite{soderberg2024mutation}. For convenience of the reader we will state the most important steps and will refer to \cite{soderberg2024mutation} for the details.

\subsection{Conventions}
In this article $R$-modules mean left $R$-modules and right $R$-modules are considered as left $R^{op}$-modules. All graded rings $R$ are assumed to be non-negatively graded, i.e. $R = \bigoplus_{i\ge 0}R_i$, where $R_iR_j \subseteq R_{i+j}$. We say that $R$ is complete graded if instead $R = \prod_{i\ge 0}R_i$, i.e. $R$ is the completion of the subring $\bigoplus_{i\ge 0}R_i$. The composition of arrows and morphisms goes from right to left. The center of an algebra $D$ is denoted by $\mathcal{Z}(D)$. The $D$-center $\mathcal{Z}_D(A)$, of some $D$-$D$-bimodule $A$, is defined to be the $\mathcal{Z}(D)$-subbimodule of $A$
\begin{equation*}
	\mathcal{Z}_D(A) = \{a\in A \mid ad = da, d\in D \}.
\end{equation*}
All quivers are assumed to be finite and connected.

\subsection{Casimir Elements}
The following lemma is well-known and we will refer to \cite[Lemma 4.1]{kulshammer2017pro} for the proof. It can also be found in \cite{PareigisLnotes}.

\begin{mylemma}\label{lemma - dual basis}(Dual basis lemma)
	Let $P_R$ be a right $R$-module. Then, the following statements are equivalent:
	\begin{enumerate}
		\item $P$ is finitely generated and projective.
		\item There are $x_1, x_2, \dots, x_m\in P$ and $f_1, f_2, \dots, f_m\in \mathrm{Hom}_{R^{op}}(P, R)$ such that for every $x\in P$ we have
		\begin{equation*}
			x = \sum_{i=1}^m x_if_i(x).
		\end{equation*}
		\item The dual basis homomorphism $P\otimes_R \mathrm{Hom}_{R^{op}}(P, R) \to \mathrm{End}_{R^{op}}(P)$, $p\otimes f\mapsto (q\mapsto pf(q))$ is an isomorphism.
	\end{enumerate}
\end{mylemma}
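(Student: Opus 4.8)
The plan is to take statement $(2)$ as the hub and prove the two equivalences $(1)\Leftrightarrow(2)$ and $(2)\Leftrightarrow(3)$. The core input is the standard characterisation of finitely generated projectives as direct summands of finitely generated free modules, and the essential construction is an explicit inverse to the dual basis homomorphism $\theta\colon P\otimes_R\Hom_{R^{op}}(P,R)\to\End_{R^{op}}(P)$. Throughout I would keep careful track of sidedness: $P$ is a right module, $\Hom_{R^{op}}(P,R)$ carries the left $R$-action $(rf)(q)=rf(q)$, the tensor product is balanced over $R$, and most verifications reduce to the right $R$-linearity of the maps involved.

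For $(1)\Rightarrow(2)$ I would use that a finitely generated projective $P$ is a direct summand of some $R^m$, giving right-module maps $\iota\colon P\to R^m$ and $\rho\colon R^m\to P$ with $\rho\iota=\id_P$. Setting $x_i=\rho(e_i)$ for the standard generators $e_i$ of $R^m$ and $f_i=\pi_i\iota$, where $\pi_i\colon R^m\to R$ is the $i$-th coordinate functional (an $R^{op}$-homomorphism), the identity $\iota(x)=\sum_i e_i f_i(x)$ pushes forward under $\rho$ to $x=\sum_i x_i f_i(x)$, which is $(2)$. For the converse $(2)\Rightarrow(1)$, the same data define $\iota(x)=\sum_i e_i f_i(x)$ and $\rho(e_i)=x_i$; right-linearity of the $f_i$ makes $\iota$ a morphism and $(2)$ yields $\rho\iota=\id_P$, exhibiting $P$ as a summand of $R^m$ and hence finitely generated projective.

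For $(2)\Rightarrow(3)$ I would exhibit the inverse explicitly, defining $\psi\colon\End_{R^{op}}(P)\to P\otimes_R\Hom_{R^{op}}(P,R)$ by $\psi(\phi)=\sum_i\phi(x_i)\otimes f_i$. Verifying $\theta\psi=\id$ reduces to $\sum_i\phi(x_i)f_i(q)=\phi\bigl(\sum_i x_i f_i(q)\bigr)=\phi(q)$, using right-linearity of $\phi$ followed by $(2)$; verifying $\psi\theta=\id$ on a simple tensor $p\otimes g$ uses the balancing $pg(x_i)\otimes f_i=p\otimes g(x_i)f_i$ together with $\sum_i g(x_i)f_i=g$, which again follows from $(2)$ and the right-linearity of $g$. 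For $(3)\Rightarrow(2)$ it suffices that $\theta$ is surjective: choosing a (necessarily finite) preimage $\sum_{i=1}^m x_i\otimes f_i$ of $\id_P$ and unwinding $\theta$ gives $q=\sum_i x_i f_i(q)$ for all $q$, which is precisely $(2)$.

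The only genuinely delicate point---the main obstacle---is the bookkeeping of one-sided structures: ensuring that each scalar $f_i(x)\in R$ acts on the correct side, that $\theta$ is well defined on the tensor product (that is, $R$-balanced, so $pr\otimes f=p\otimes rf$), and that expressions such as $g(x_i)f_i$ are read in the intended left $R$-module structure on $\Hom_{R^{op}}(P,R)$. No deeper idea is required beyond the direct-summand characterisation; the entire content lies in organising these linearity checks consistently with the paper's right-module conventions.
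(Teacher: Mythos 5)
Your proof is correct and is precisely the standard dual-basis argument: realising $P$ as a direct summand of $R^m$ for $(1)\Leftrightarrow(2)$, and inverting the dual basis homomorphism explicitly via $\phi\mapsto\sum_i\phi(x_i)\otimes f_i$ for $(2)\Leftrightarrow(3)$. The paper itself gives no proof, deferring to \cite[Lemma 4.1]{kulshammer2017pro} and \cite{PareigisLnotes}, and your argument matches the well-known proof found in those sources, with the sidedness bookkeeping handled correctly throughout.
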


\begin{mydef}
	Let $x_i$ and $f_i$ be as in Lemma~\ref{lemma - dual basis}. The element
	\begin{equation*}
		\sum_{i=1}^n x_i\otimes_R f_i\in P\otimes_R \mathrm{Hom}_{R^{op}}(P, R)
	\end{equation*}
	is called the Casimir element of $P\otimes_R \mathrm{Hom}_{R^{op}}(P, R)$. The Casimir element does not depend on the choice of $x_i$ and $f_i$ by \cite[Lemma 1.1]{Dlab_1980}.
\end{mydef}

\subsection{Species with Potential}
Let $\K$ be a field.

\begin{mydef}\label{Definition - Species}(Species)
	Let $Q$ be a quiver without multiple arrows.
	\begin{enumerate}
		\item A species $S=(D_i, M_\alpha)_{i\in Q_0, \alpha\in Q_1}$ is a collection where each $D_i$ is a semisimple $\K$-algebra and $M_\alpha\in D_j$-$D_i$-$\mathrm{mod}$, where $\alpha:i\rightarrow j$, such that $\mathrm{Hom}_{D_i^{op}}(M_\alpha, D_i)\cong \mathrm{Hom}_{D_j}(M_\alpha, D_j)$ as $D_i$-$D_j$-modules, such that $\mathcal{Z}_\K(D_i) = D_i$ and $\mathcal{Z}_\K(M_\alpha) = M_\alpha$ for all $i\in Q_0$ and $\alpha\in Q_1$.
		\item A finite dimensional species $S$ is a species $S$ such that $\dim_\K D_i<\infty$ and $\dim_\K M_\alpha<\infty$ for all $i\in Q_0$ and $\alpha\in Q_1$.
		\item We say that a finite dimensional species $S$ is a $\mathbb{K}$-species if all $D_i$ are division $\K$-algebras over a common central subfield $\mathbb{K}$.
		\item We call a species $S$ acyclic if $Q$ is acyclic.
	\end{enumerate}
\end{mydef}

\begin{mydef}
	For a species $S$ let $D=\bigoplus_{i\in Q_0}D_i$ and $M=\bigoplus_{\alpha\in Q_1}M_\alpha\in D$-$D\mathrm{-mod}$. 
	\begin{enumerate}
		\item We define the tensor algebra $T(S)$ to be the tensor ring $T(D, M)$. More explicitly,
		\begin{equation*}
			T(S) = T(D, M) = \bigoplus_{k\ge 0} M^k = \bigoplus_{k\ge 0} M^{\otimes_D k}, \quad M^0 = D.
		\end{equation*}
		\item We define the complete tensor algebra $\widehat{T}(S)$ as
		\begin{equation*}
			\widehat{T}(S) = \prod_{k\ge 0} M^k = \prod_{k\ge 0} M^{\otimes_D k}, \quad M^0 = D.
		\end{equation*}
	\end{enumerate}
\end{mydef}

For a species $S$ the semisimple $\K$-algebra $D$ is a symmetric algebra due to \cite[Corollary 5.17]{skowronski2011frobenius}, i.e. there exists a morphism $\t: D\to \K$ such that $\t(ab) = \t(ba)$ for all $a,b\in D$ and that there are no non-zero ideals contained in the kernel of $\t$. As noted in \cite[Remark 2.2]{Nquefack2012PotentialsJacobian} the dualising condition, i.e. $\mathrm{Hom}_{D_i^{op}}(M_\alpha, D_i)\cong \mathrm{Hom}_{D_j}(M_\alpha, D_j)$ as $D_i$-$D_j$-modules, in Definition~\ref{Definition - Species} is automatic as $D$ is symmetric. Moreover, the $D$-$D$-bimodule morphisms
\begin{equation*}
	\begin{aligned}
		\t_l &= \t\circ -: \mathrm{Hom}_D(M, D) \xrightarrow{\sim} \mathrm{Hom}_\K(M, \K) \\
		\t_r &= \t\circ -: \mathrm{Hom}_{D^{op}}(M, D) \xrightarrow{\sim} \mathrm{Hom}_\K(M, \K)
	\end{aligned}
\end{equation*}
are isomorphisms.

Let us denote $M^* = \mathrm{Hom}_\K(M, \K)$. Following \cite{Nquefack2012PotentialsJacobian} we define the morphism
\begin{equation*}
	\begin{aligned}
		\mathfrak{b}: M\otimes_D M^* \oplus M^*\otimes_D M &\to D, \\
		x\otimes x^* + y^*\otimes y &\mapsto \t_l^{-1}(x^*)(x) + \t_r^{-1}(y^*)(y).
	\end{aligned}
\end{equation*}

We will denote the morphism $\b$ as $\b_S$ to make it clear that it is defined for a given species $S$.

\begin{mylemma}\cite[Lemma 3.9]{soderberg2024mutation}
	The morphism $\mathfrak{b}$ satisfies:
	\begin{enumerate}
		\item $\t$ is a symmetrising trace for $\b$, i.e. $\t(\mathfrak{b}(x\otimes x^*)) = \t(\b(x^*\otimes x))$ for all $x\in M$ and $x^*\in M^*$.
		\item The morphisms
		\begin{equation*}
			\begin{aligned}
				\b_{1, l}:& M^* \to \mathrm{Hom}_D(M, D), \\
				&\hspace*{0.2cm}x^*\mapsto \b(-\otimes x^*), \\
				\b_{1, r}:& M \to \mathrm{Hom}_{D^{op}}(M^*, D), \\
				&\hspace*{0.2cm}x \mapsto \b(x\otimes -),
			\end{aligned}
		\end{equation*}
		are bijective. Or equivalently, the morphisms
		\begin{equation*}
			\begin{aligned}
				\b_{2, r}:& M^* \to \mathrm{Hom}_{D^{op}}(M, D), \\
				&\hspace*{0.2cm}x^*\mapsto \b(x^*\otimes -), \\
				\b_{2, l}:& M \to \mathrm{Hom}_D(M^*, D), \\
				&\hspace*{0.2cm}x \mapsto \b(-\otimes x),
			\end{aligned}
		\end{equation*}
		are bijective.
	\end{enumerate}
\end{mylemma}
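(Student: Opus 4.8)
The plan is to reduce every assertion to the defining property of the structure isomorphisms $\t_l$ and $\t_r$, after which both parts become essentially formal. Recall that $\t_l^{-1}(x^*)\in \mathrm{Hom}_D(M, D)$ is by construction the unique left $D$-linear map with $\t\circ \t_l^{-1}(x^*) = x^*$, and likewise $\t_r^{-1}(x^*)\in \mathrm{Hom}_{D^{op}}(M, D)$ is the unique right $D$-linear map with $\t\circ \t_r^{-1}(x^*) = x^*$. Unwinding the definition of $\b$ gives the two identities $\b(x\otimes x^*) = \t_l^{-1}(x^*)(x)$ and $\b(x^*\otimes x) = \t_r^{-1}(x^*)(x)$, and everything below is read off from these.

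For part (1) I would simply apply $\t$ to both expressions and invoke the defining property of $\t_l$, $\t_r$:
\begin{equation*}
	\t(\b(x\otimes x^*)) = \t(\t_l^{-1}(x^*)(x)) = x^*(x) = \t(\t_r^{-1}(x^*)(x)) = \t(\b(x^*\otimes x)).
\end{equation*}
No further input is needed, so the symmetrising-trace condition is immediate.

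For part (2), the first observation is that two of the four maps are literally the structure isomorphisms. From $\b(x\otimes x^*) = \t_l^{-1}(x^*)(x)$ one reads off $\b_{1,l}(x^*) = \t_l^{-1}(x^*)$, i.e.\ $\b_{1,l} = \t_l^{-1}$; similarly $\b_{2,r} = \t_r^{-1}$. Both are bijective by hypothesis. For the remaining maps $\b_{1,r}$ and $\b_{2,l}$, which have $M$ as their source, I would factor them through biduality. Concretely, $\b_{1,r}(x)$ is the map $x^*\mapsto \t_l^{-1}(x^*)(x) = \mathrm{ev}_x(\t_l^{-1}(x^*))$, so that $\b_{1,r} = (\t_l^{-1})^*\circ \delta$, where $\delta\colon M\to \mathrm{Hom}_{D^{op}}(\mathrm{Hom}_D(M, D), D)$ is the canonical biduality map $x\mapsto \mathrm{ev}_x$ and $(\t_l^{-1})^*$ denotes precomposition with the $D$-$D$-bimodule isomorphism $\t_l^{-1}$. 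The mirror factorization $\b_{2,l} = (\t_r^{-1})^*\circ \delta'$, with $\delta'\colon M\to \mathrm{Hom}_D(\mathrm{Hom}_{D^{op}}(M, D), D)$, handles $\b_{2,l}$.

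The only nonformal point, and thus the main obstacle, is checking that $\delta$ and $\delta'$ are isomorphisms, together with the bookkeeping of which $\mathrm{Hom}$-space is left or right $D$-linear and the compatibility of the induced $D$-$D$-bimodule structures. This is where I would spend care: one must verify that $\mathrm{ev}_x$ lands in $\mathrm{Hom}_{D^{op}}(\mathrm{Hom}_D(M, D), D)$ (resp.\ $\mathrm{Hom}_D(\mathrm{Hom}_{D^{op}}(M, D), D)$) and that precomposition with the bimodule isomorphism $\t_l^{-1}$ (resp.\ $\t_r^{-1}$) preserves the relevant handedness. Since $D = \bigoplus_{i} D_i$ is a finite product of semisimple $\K$-algebras and $M$ is finite-dimensional, $M$ is finitely generated projective as a one-sided $D$-module on each side; hence $M$ is reflexive, so $\delta$ and $\delta'$ are bijective, and $\b_{1,r}$, $\b_{2,l}$ are bijective as composites of isomorphisms. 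Finally, because all four maps are shown to be bijective outright, the asserted equivalence of the two formulations is automatic.
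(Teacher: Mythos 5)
The paper contains no proof of this lemma to compare against: it is imported verbatim from \cite[Lemma 3.9]{soderberg2024mutation}, so your proposal can only be judged on its own terms, and on those terms it is correct. Part (1) and the identifications $\b_{1,l}=\t_l^{-1}$, $\b_{2,r}=\t_r^{-1}$ are indeed immediate from $\b(x\otimes x^*)=\t_l^{-1}(x^*)(x)$ and $\b(x^*\otimes x)=\t_r^{-1}(x^*)(x)$. Your treatment of $\b_{1,r}$ and $\b_{2,l}$ is also sound: $\mathrm{ev}_x$ is right (resp.\ left) $D$-linear on $\Hom_D(M,D)$ (resp.\ $\Hom_{D^{op}}(M,D)$), precomposition with the bimodule isomorphisms $\t_l^{-1}$, $\t_r^{-1}$ preserves the relevant one-sided linearity, and since $D$ is semisimple and $M$ is finite dimensional over $\K$, $M$ is finitely generated projective on either side, so both biduality maps $\delta$, $\delta'$ are bijective --- a standard consequence of the dual basis lemma, which is even available in the paper as Lemma~\ref{lemma - dual basis}.

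Two remarks. First, the finite-dimensionality you invoke is essential, not a convenience: your part (1) computation shows that composing $\b_{1,r}$ with the isomorphism $\t\circ -\colon \Hom_{D^{op}}(M^*, D)\to \Hom_\K(M^*,\K)$ sends $x$ to $\mathrm{ev}_x$, i.e.\ equals the canonical map $M\to M^{**}$ over $\K$, which fails to be surjective when $\dim_\K M=\infty$; so the lemma genuinely lives in the finite dimensional setting, and you placed the hypothesis exactly where it is needed. Second, this same observation shortens your argument for the two harder maps: since $\t\circ\b_{1,r}(x)=\mathrm{ev}_x=\t\circ\b_{2,l}(x)$ as elements of $\Hom_\K(M^*,\K)$, bijectivity of $\b_{1,r}$ and $\b_{2,l}$ follows at once from bijectivity of $\t_r$ and $\t_l$ (applied to the bimodule $M^*$) together with bijectivity of $M\to M^{**}$, with no need to discuss $D$-reflexivity or the bimodule bookkeeping at all. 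Your route through $D$-linear biduality is correct, just slightly heavier than necessary.
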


Let
\begin{equation*}
	\sum_{i=1}^n \hat{x}_i\otimes x_i\in \mathrm{Hom}_D(M, D)\otimes_D M, \qquad \sum_{j=1}^m y_j\otimes \hat{y}_j\in M\otimes_D \mathrm{Hom}_{D^{op}}(M, D),
\end{equation*}
be Casimir elements. Setting $x^*_i = \b_{1, l}^{-1}(\hat{x}_i)$ and $y^*_j = \b_{2, r}^{-1}(\hat{y}_j)$ we get elements
\begin{equation}\label{eq - Casimir elements of M M^*}
	\sum_{i=1}^n x_i^*\otimes x_i\in M^*\otimes_D M, \qquad \sum_{j=1}^m y_j\otimes y_j^*\in M\otimes_D M^*,
\end{equation}
satisfying
\begin{equation}\label{eq - casimir elements equations ass. to b}
	\sum_{i=1}^n \b(x\otimes x_i^*)x_i = x = \sum_{j=1}^my_j\b(y_j^*\otimes x), \qquad \sum_{i=1}^n x_i^*\b(x_i\otimes \xi) = \xi = \sum_{j=1}^m\b(\xi\otimes y_j)y_j^*,
\end{equation}
for all $x\in M$ and $\xi\in M^*$.

\subsection{Jacobian Algebras}
In this subsection we define the Jacobian algebra for a species as in \cite{Nquefack2012PotentialsJacobian}.

\begin{mydef}\cite[Definition 3.5]{Nquefack2012PotentialsJacobian}
	Let $S$ be a species. We call elements in $\mathcal{Z}_D(\widehat{T}(S)_{\ge 2})$ potentials.
\end{mydef}

Throughout this article we assume that all potentials are finite, i.e. we assume that all potentials $W\in \mathcal{Z}_D(\widehat{T}(S)_{\ge 2})$ are such that $W\in T(S)$.

\begin{mydef}\cite[Proposition 3.2]{Nquefack2012PotentialsJacobian}\label{definition - derivative operators}
	Let $S$ be a species. We define the derivative operators as
	\begin{equation*}
		\begin{aligned}
			\partial^l: M^*\otimes_D M\otimes_D \widehat{T}(S) &\to \widehat{T}(S), \\
			\xi\otimes a\otimes b &\mapsto \partial^l_\xi(a\otimes b) = \b(\xi\otimes a)b, \\
			\partial^r: \widehat{T}(S) \otimes_D M\otimes_D M^* &\to \widehat{T}(S), \\
			a\otimes b\otimes \xi &\mapsto \partial^r_\xi(a\otimes b) = a\b(b\otimes \xi).
		\end{aligned}
	\end{equation*}
\end{mydef}

\begin{mylemma}\cite[Lemma 2.5, Proposition 3.2]{Nquefack2012PotentialsJacobian}
	Let $S$ be a species and let $W\in \mathcal{Z}_D(\widehat{T}(S)_{\ge 2})$ be a potential. We define the permutation operators $\varepsilon_l, \varepsilon_r: \mathcal{Z}_D(\widehat{T}(S)_{\ge 2})\to \mathcal{Z}_D(\widehat{T}(S)_{\ge 2})$ by
	\begin{equation*}
		\begin{aligned}
			\varepsilon_l(W) = \sum_{i=1}^m \partial^l_{x^*_i}(W)x_i, \\
			\varepsilon_r(W) = \sum_{i=1}^n y\partial^r_{y^*_i}(W),
		\end{aligned}
	\end{equation*}
	where $x_i, x^*_i, y_i, y^*_i$ make up the Casimir elements in \eqref{eq - Casimir elements of M M^*}. We define the cyclic permutation operator $\varepsilon_c: \mathcal{Z}_D(\widehat{T}(S)_{\ge 2})\to \mathcal{Z}_D(\widehat{T}(S)_{\ge 2})$ on the homogeneous element of degree $d+1$ by $\varepsilon_c(W) = \sum_{k=0}^d \varepsilon^k_l(W) = \sum_{k=0}^d \varepsilon^k_r(W)$.
\end{mylemma}

For each $\xi\in M^*$ we use the notation $\partial^l_\xi = \partial^l(\xi\otimes-)$ and $\partial^r_\xi = \partial(-\otimes \xi)$.

\iffalse
\begin{myrem}
	The derivative operators in Definition~\ref{definition - derivative operators} satisfy the following:
	\begin{equation*}
		\begin{aligned}
			\partial^l_\xi(d) &= 0 = \partial^r_\xi(d), \\
			\partial^l_\xi(mx) &= \partial^l_\xi(m)x, \\
			\partial^r_\xi(xm) &= x\partial^r_\xi(m),
		\end{aligned}
	\end{equation*}
	for all $d\in D$, $m\in M$ and $x\in \widehat{T}(S)$.
\end{myrem}

\begin{mylemma}\cite[Proposition 3.2]{Nquefack2012PotentialsJacobian}
	Let $S$ be a species and $W\in \mathcal{Z}_D(\widehat{T}(S)_{\ge 2})$.
	\begin{enumerate}
		\item The derivative operators in Definition~\ref{definition - derivative operators} satisfy $\partial^r_\xi(\varepsilon_l (W)) = \partial^l_\xi(W)$ and $\partial^l_\xi(\varepsilon_r(W)) = \partial^r_\xi(W)$ for $\xi\in M^*$.
		\item $\varepsilon_l\circ \varepsilon_r(W) = W = \varepsilon_r\circ \varepsilon_l(W)$. 
	\end{enumerate}
\end{mylemma}
\fi

\begin{mydef}
	We define the cyclic derivative operator
	\begin{equation*}
		\begin{aligned}
			\partial: M^*\otimes_\K \mathcal{Z}_D(\widehat{T}(S)_{\ge 2}) &\to \widehat{T}(S)_{\ge 1} \\
			\xi\otimes W &\mapsto \partial^l(\xi\otimes \varepsilon_c(W)) = \partial^r(\varepsilon_c(W)\otimes \xi).
		\end{aligned}
	\end{equation*}
\end{mydef}

\begin{mydef}
	For a species with potential $(S, W)$ we define the Jacobian algebra $\mathcal{P}(S, W) = \widehat{T}(S)/\mathcal{J}(S, W)$, where $\mathcal{J}(S, W) = \overline{\langle \partial_\xi (W) \mid \xi\in M^*\rangle}$. 
\end{mydef}

\subsection{\textit{d}-Representation Finite Algebras}
The main part of this paper is to understand the relation between $2$-APR tilting and cut mutation defined in later sections. For this we will work with $2$-representation finite algebras.

\begin{mydef}
	Let $\Lambda$ be a finite dimensional $\K$-algebra with $\mathrm{gldim}\Lambda \le d$ for some positive integer $d$. We say that $\Lambda$ is a $d$-representation finite if there exists a $d$-cluster tilting object $M\in \mathrm{mod} (\Lambda)$, i.e.
	\begin{equation*}
		\begin{aligned}
			\mathrm{add}(M) &= \{X\in \Lambda\mathrm{-mod} \mid \mathrm{Ext}^i_\Lambda(M, X) = 0 \mbox{ for any } 0 < i < d \} = \\
			&= \{X\in \Lambda\mathrm{-mod} \mid \mathrm{Ext}^i_\Lambda(X, M) = 0 \mbox{ for any } 0 < i < d \}.
		\end{aligned}
	\end{equation*}
\end{mydef}

For a $d$-representation finite algebra $\Lambda$ we define the $d$-Auslander-Reiten translations as
\begin{equation*}
	\begin{aligned}
		\tau_d &= \mathrm{Tor}^\Lambda_d(\DD\Lambda, -)\cong \DD\mathrm{Ext}^d_\Lambda(-, \Lambda): \Lambda\mathrm{-mod}\rightarrow \Lambda\mathrm{-mod}, \\
		\tau_d^{-1} &= \DD\mathrm{Tor}^\Lambda_d(\DD-, \DD\Lambda)\cong \mathrm{Ext}^d_\Lambda(\DD\Lambda, -): \Lambda\mathrm{-mod}\rightarrow \Lambda\mathrm{-mod},
	\end{aligned}
\end{equation*}
where $\DD = \Hom_\K(-, \K)$.

\begin{mydef}\cite[Definition 2.11]{IO13Stablecateofhigherpreproj}
	Let $\Lambda$ be a finite dimensional algebra with $\gldim \Lambda \le d$. We denote the $(d+1)$-preprojective algebra, or the higher preprojective algebra, of $\Lambda$ by $\Pi_{d+1}(\Lambda)$. It is defined as
	\begin{equation*}
		\Pi_{d+1}(\Lambda) = \bigoplus_{i\ge 0}(\Pi_{d+1}(\Lambda))_i,
	\end{equation*}
	where
	\begin{equation*}
		\Pi_{d+1}(\Lambda)_i = \Pi_i = \mathrm{Hom}_\Lambda(\Lambda, \tau_d^{-i}\Lambda)
	\end{equation*}
	with multiplication
	\begin{equation*}
		\begin{aligned}
			\Pi_i\times \Pi_j&\to \Pi_{i+j}, \\
			(u, v)&\mapsto uv = (\tau^{-i}_d(v)\circ u: \Lambda\to \tau_d^{-(i+j)}\Lambda).
		\end{aligned}
	\end{equation*}
	The complete $(d+1)$-preprojective algebra of $\Lambda$ is defined as the completion of $\Pi_{d+1}(\Lambda)$, i.e.,
	\begin{equation*}
		\widehat{\Pi}_{d+1}(\Lambda) = \prod_{i\ge 0} \Hom_\Lambda(\Lambda, \tau^{-i}_d\Lambda).
	\end{equation*}
\end{mydef}

\begin{mydef}
	Let $\Lambda$ be a finite dimensional $\K$-algebra with $\gldim \Lambda < \infty$. We define $\nu_d = \nu\circ [-d]: \mathcal{D}^b(\Lambda\mathrm{-mod})\to \mathcal{D}^b(\Lambda\mathrm{-mod})$, where $\nu = D\Lambda\otimes_\Lambda^\mathbb{L}-$ the is the Nakayama functor.
\end{mydef}

\begin{myprop}
	For a finite dimensional algebra $\Lambda$ with $\gldim \Lambda \le d$, we can also define the $(d+1)$-preprojective algebra using $\nu_d^{-1}$ instead of $\tau_d^-$ since
	\begin{equation}\label{eq - derived prop preprojective}
		\Hom_{\mathcal{D}^b(\Lambda\mathrm{-mod})}(\Lambda, \tau_d^{-i}\Lambda) = \tau_d^{-i}\Lambda = H^0(\nu_d^{-i}\Lambda) = \Hom_{\mathcal{D}^b(\Lambda\mathrm{-mod})}(\Lambda, \nu_d^{-i}\Lambda),
	\end{equation}
	i.e.
	\begin{equation}\label{Remark - definition of derived preprojective algebra}
		\Pi_{d+1}(\Lambda) = \bigoplus_{i\ge 0}\mathrm{Hom}_{\mathcal{D}^b(\Lambda\mathrm{-mod})}(\Lambda, \nu_d^{-i}\Lambda),
	\end{equation}
	and similarly the complete $(d+1)$-preprojective algebra can be defined as
	\begin{equation*}
		\widehat{\Pi}_{d+1}(\Lambda) = \prod_{i=0}^\infty \Hom_{\mathcal{D}^b(\Lambda\mathrm{-mod})}(\Lambda, \nu^{-i}_d\Lambda).
	\end{equation*}
\end{myprop}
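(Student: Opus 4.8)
The plan is to establish the chain of four equalities in \eqref{eq - derived prop preprojective} term by term: the two outer equalities are formal consequences of $\Lambda$ being a projective module, while the middle equality $\tau_d^{-i}\Lambda = H^0(\nu_d^{-i}\Lambda)$ carries the real content and is proved by induction on $i$ via a truncation argument in $\mathcal{D}^b(\Lambda\mathrm{-mod})$. I would first dispose of the outer equalities. Since $\Lambda$ is projective, $R\Hom_\Lambda(\Lambda, X)\cong X$ for every $X\in \mathcal{D}^b(\Lambda\mathrm{-mod})$, so $\Hom_{\mathcal{D}^b(\Lambda\mathrm{-mod})}(\Lambda, X) = H^0(X)$. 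Applying this with $X = \nu_d^{-i}\Lambda$ gives the rightmost equality, and with $X = \tau_d^{-i}\Lambda$, which is a module concentrated in degree $0$, it gives the leftmost equality through $H^0(\tau_d^{-i}\Lambda) = \tau_d^{-i}\Lambda$ and the canonical identification $\Hom_\Lambda(\Lambda, M)\cong M$.

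For the middle equality I would work with the quasi-inverse $\nu^{-1} = R\Hom_\Lambda(\DD\Lambda, -)$ of the Nakayama functor, which is an autoequivalence of $\mathcal{D}^b(\Lambda\mathrm{-mod})$ because $\gldim\Lambda<\infty$, so that $\nu_d^{-1} = \nu^{-1}[d]$. Since $\mathrm{pd}_\Lambda \DD\Lambda \le \gldim\Lambda\le d$, the functor $\nu^{-1}$ sends a module to a complex with cohomology concentrated in degrees $[0,d]$; writing $\mathcal{D}^{\le n}$ for the complexes with cohomology in degrees $\le n$, this means $\nu^{-1}$ maps $\mathcal{D}^{\le n}$ into $\mathcal{D}^{\le n+d}$ and preserves $\mathcal{D}^{\ge n}$, so after the shift $\nu_d^{-1}$ preserves the upper bound, i.e. $\nu_d^{-1}(\mathcal{D}^{\le n})\subseteq \mathcal{D}^{\le n}$ while $\nu_d^{-1}(\mathcal{D}^{\ge n})\subseteq \mathcal{D}^{\ge n-d}$. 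The base case $i=1$ is then the direct computation $H^0(\nu_d^{-1}\Lambda) = H^d(\nu^{-1}\Lambda) = \mathrm{Ext}^d_\Lambda(\DD\Lambda, \Lambda) = \tau_d^{-1}\Lambda$, using the description $\tau_d^{-1}\cong \mathrm{Ext}^d_\Lambda(\DD\Lambda,-)$ recalled in the excerpt.

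For the inductive step I set $C = \nu_d^{-(i-1)}\Lambda$, which lies in $\mathcal{D}^{\le 0}$ by the amplitude estimate applied repeatedly to $\Lambda\in\mathcal{D}^{\le 0}$. The truncation triangle $\tau_{\le -1}C \to C \to H^0(C) \xrightarrow{+1}$ gives, after applying $\nu_d^{-1}$, a triangle in which the outer term $\nu_d^{-1}(\tau_{\le -1}C)$ lies in $\mathcal{D}^{\le -1}$, so its $H^0$ and $H^1$ vanish; the long exact cohomology sequence then forces $H^0(\nu_d^{-1}C)\cong H^0(\nu_d^{-1}H^0(C)) = \tau_d^{-1}(H^0 C)$. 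Combining with the induction hypothesis $H^0(C) = \tau_d^{-(i-1)}\Lambda$ yields $H^0(\nu_d^{-i}\Lambda) = \tau_d^{-i}\Lambda$.

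I expect this truncation step to be the main obstacle, and it is the only genuinely nonformal point: because $\nu_d^{-i}\Lambda$ is \emph{not} concentrated in degree $0$, one cannot simply identify iterating $\nu_d^{-1}$ in the derived category with iterating the module functor $\tau_d^{-1}$, and the amplitude bound $\nu_d^{-1}(\mathcal{D}^{\le 0})\subseteq \mathcal{D}^{\le 0}$ is exactly what renders the negative-degree part of each $\nu_d^{-i}\Lambda$ invisible to $H^0$. Finally, the graded-algebra statement \eqref{Remark - definition of derived preprojective algebra} follows by assembling these identifications over all $i$, after checking that under the isomorphisms above the composition in $\mathcal{D}^b(\Lambda\mathrm{-mod})$ transports to the multiplication on $\Pi_{d+1}(\Lambda)$ defined via $\tau_d$, which is routine given the functoriality just used.
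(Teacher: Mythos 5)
Your proposal is correct. You handle the two outer equalities exactly as the paper does, via $\Hom_{\mathcal{D}^b(\Lambda\mathrm{-mod})}(\Lambda, X) = H^0(X)$ for bounded complexes $X$, using that $\Lambda$ is projective. The difference is in the middle equality $\tau_d^{-i}\Lambda = H^0(\nu_d^{-i}\Lambda)$, which the paper does not prove at all: it simply cites \cite[Lemma 5.5 (b)]{Iyama2011clustertiltinghigheraus}. You instead give a self-contained proof by induction on $i$, using the amplitude bound $\nu_d^{-1}(\mathcal{D}^{\le 0})\subseteq \mathcal{D}^{\le 0}$ (coming from $\mathrm{pd}_\Lambda \DD\Lambda \le d$), the truncation triangle $\tau_{\le -1}C\to C\to H^0(C)$, and the long exact sequence, in which $H^0$ and $H^1$ of $\nu_d^{-1}(\tau_{\le -1}C)$ vanish, giving $H^0(\nu_d^{-1}C)\cong H^0(\nu_d^{-1}H^0(C))$. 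This is in substance a reconstruction of the cited lemma (your amplitude estimate is its part (a), your $H^0$-identification its part (b)), and the argument is sound. What your route buys is self-containedness, so the reader need not chase the reference; what the paper's route buys is the brevity appropriate for a preliminaries section. Two minor points if you write this up: first, in the inductive step you apply the identity $H^0(\nu_d^{-1}M)=\mathrm{Ext}^d_\Lambda(\DD\Lambda, M)=\tau_d^{-1}M$ to the module $M=H^0(C)$ rather than to $\Lambda$ itself, so the base computation should be stated for an arbitrary module $M$ (it holds verbatim); second, the claim that $\nu^{-1}$ maps $\mathcal{D}^{\le n}$ into $\mathcal{D}^{\le n+d}$ for complexes, not just modules, deserves a sentence of justification, e.g.\ induction on the cohomological amplitude via truncation triangles.
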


\begin{proof}
	The first and the third equality in \eqref{eq - derived prop preprojective} follows since
	\begin{equation*}
		\Hom_{\mathcal{D}^b(\Lambda\mathrm{-mod})}(\Lambda, X) = H^0(X)
	\end{equation*}
	for $X\in \mathcal{D}^b(\Lambda\mathrm{-mod})$ and the second equality in \eqref{eq - derived prop preprojective} follows from \cite[Lemma 5.5 (b)]{Iyama2011clustertiltinghigheraus}.
\end{proof}

Let $\Lambda$ be a basic $d$-representation finite $\K$-algebra. Let $\{e_i\}_{i\in I}$ be a complete set of primitive orthogonal idempotents. Then by \cite[Proposition 1.3 (b)]{Iyama2011clustertiltinghigheraus} there exists a permutation $\sigma:I\to I$, which is called the Nakayama permutation, such that
\begin{equation*}
	\Lambda e_{\sigma(i)} = \tau_d^{l_i - 1}\DD(e_i\Lambda)
\end{equation*}
for some integers $l_i$.

\begin{mydef}\cite[Definition 1.2]{herschend2011n}
	A basic $d$-representation finite $\K$-algebra $\Lambda$ is said to be $l$-homogeneous if $l_i = l$ for all $i$.
\end{mydef}

We have the following useful result.

\begin{mycor}\cite[Corollary 1.5]{herschend2011n}\label{corollary - A x B is n_1 + n_2 RF}
	Let $\mathbb{K}$ be a perfect field and $l$ a positive integer. If $\Lambda_i$ is $l$-homogeneous $d_i$-representation finite for each $i\in \{1, 2\}$, then $\Lambda_1\otimes_\K \Lambda_2$ is an $l$-homogeneous $(d_1 + d_2)$-representation-finite algebra with a $(d_1 + d_2)$-cluster tilting module $\bigoplus_{i=0}^{l-1}(\tau_{d_1}^{-i}\Lambda_1\otimes_\K \tau_{d_2}^{-i}\Lambda_2)$.
\end{mycor}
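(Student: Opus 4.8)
The plan is to unpack the three assertions the statement bundles together: that $\gldim(\Lambda_1\otimes_\K\Lambda_2)=d_1+d_2$, that the prescribed module is the correct cluster tilting object, and that it is $(d_1+d_2)$-cluster tilting with the tensor product $l$-homogeneous. Write $A=\Lambda_1\otimes_\K\Lambda_2$. First I would record the two facts that make tensor products behave over a perfect field: global dimension is additive, $\gldim A=\gldim\Lambda_1+\gldim\Lambda_2=d_1+d_2$ (since $\K$ perfect forces each $\Lambda_j/\rad\Lambda_j$ separable, so an external tensor product of minimal projective resolutions of simples stays minimal), and the Nakayama functor of $A$ is the external tensor product of those of the factors, $\nu_A\cong\nu_{\Lambda_1}\otimes_\K\nu_{\Lambda_2}$ on $\mathcal{D}^b(A\mathrm{-mod})\simeq\mathcal{D}^b(\Lambda_1\mathrm{-mod})\otimes_\K\mathcal{D}^b(\Lambda_2\mathrm{-mod})$. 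Since the shift distributes across an external tensor product, $\nu_{d_1+d_2}=\nu_A[-(d_1+d_2)]$ satisfies $\nu_{d_1+d_2}^{-i}(X\otimes_\K Y)\cong\nu_{d_1}^{-i}X\otimes_\K\nu_{d_2}^{-i}Y$ for all $i$.

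Applying this with $X\otimes_\K Y=A$ gives $\nu_{d_1+d_2}^{-i}(A)\cong\nu_{d_1}^{-i}\Lambda_1\otimes_\K\nu_{d_2}^{-i}\Lambda_2$. Because each $\Lambda_j$ is $d_j$-representation finite, $\nu_{d_j}^{-i}\Lambda_j$ is concentrated in degree $0$ and equal to $\tau_{d_j}^{-i}\Lambda_j$ (cf. \eqref{eq - derived prop preprojective}); as $\K$ is a field the derived tensor product carries no higher Tor, so $\nu_{d_1+d_2}^{-i}(A)$ is again concentrated in degree $0$ and equals the module $\tau_{d_1}^{-i}\Lambda_1\otimes_\K\tau_{d_2}^{-i}\Lambda_2$. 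The $l$-homogeneity of the factors gives $\tau_{d_j}^{-(l-1)}\Lambda_j\cong\DD\Lambda_j$ (injective), hence $\tau_{d_j}^{-l}\Lambda_j=\tau_{d_j}^{-1}(\DD\Lambda_j)=0$; so the summands vanish for $i\ge l$ and $\bigoplus_{i\ge0}\nu_{d_1+d_2}^{-i}(A)=\bigoplus_{i=0}^{l-1}(\tau_{d_1}^{-i}\Lambda_1\otimes_\K\tau_{d_2}^{-i}\Lambda_2)=:M$, exactly the prescribed module. Moreover $\nu_{d_1+d_2}^{-(l-1)}(A)\cong\DD\Lambda_1\otimes_\K\DD\Lambda_2\cong\DD A$ is injective while the $l$-th power vanishes, yielding the Nakayama permutation with all $l_i=l$, so $A$ is $l$-homogeneous.

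It then remains to show $M$ is $(d_1+d_2)$-cluster tilting. For the orthogonality (rigidity) I would compute $\mathrm{Ext}^n_A$ between summands by the K\"unneth isomorphism $\mathrm{Ext}^n_A(X_1\otimes_\K X_2,Y_1\otimes_\K Y_2)\cong\bigoplus_{p+q=n}\mathrm{Ext}^p_{\Lambda_1}(X_1,Y_1)\otimes_\K\mathrm{Ext}^q_{\Lambda_2}(X_2,Y_2)$, valid since $\K$ is a field. For $0<n<d_1+d_2$ any summand with $0<p<d_1$ or $0<q<d_2$ vanishes by cluster tilting of the factors, leaving only the two corner terms at $n=d_1$ and $n=d_2$, namely $\mathrm{Ext}^{d_1}_{\Lambda_1}\otimes_\K\Hom_{\Lambda_2}$ and $\Hom_{\Lambda_1}\otimes_\K\mathrm{Ext}^{d_2}_{\Lambda_2}$, evaluated at $X_j=\tau_{d_j}^{-a}\Lambda_j$, $Y_j=\tau_{d_j}^{-b}\Lambda_j$. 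Here I would invoke higher Auslander--Reiten duality $\mathrm{Ext}^{d_j}_{\Lambda_j}(\tau_{d_j}^{-a}\Lambda_j,\tau_{d_j}^{-b}\Lambda_j)\cong\DD\Hom_{\Lambda_j}(\tau_{d_j}^{-b}\Lambda_j,\tau_{d_j}^{-(a-1)}\Lambda_j)$ together with the directedness of the $\tau_{d_j}$-orbit, $\Hom_{\Lambda_j}(\tau_{d_j}^{-a}\Lambda_j,\tau_{d_j}^{-b}\Lambda_j)\ne0\Rightarrow a\le b$: in each corner term the surviving $\Hom$-factor forces $a\le b$ while the AR-dualized factor forces $b\le a-1$, a contradiction, so both corner terms vanish and $\mathrm{Ext}^n_A(M,M)=0$ for $0<n<d_1+d_2$; the same computation with the roles reversed handles the $\mathrm{Ext}$-into-$M$ vanishing.

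I expect the genuinely hard part to be the generation (maximality) half of the cluster tilting condition: that every $X$ with $\mathrm{Ext}^i_A(M,X)=0$ for $0<i<d_1+d_2$ (and symmetrically) already lies in $\add M$. I would handle this not module-by-module but by passing to the derived category: the subcategories $\mathcal{U}_j=\add\{\nu_{d_j}^{i}\Lambda_j\mid i\in\Z\}$ are $d_j$-cluster tilting in $\mathcal{D}^b(\Lambda_j\mathrm{-mod})$, and the key lemma to establish is that their external product is $(d_1+d_2)$-cluster tilting in $\mathcal{D}^b(A\mathrm{-mod})$; intersecting this subcategory with $\mathrm{mod}\,A$ (degree $0$) then returns $\add M$. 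This abstract tensor statement for cluster tilting subcategories of triangulated categories with Serre functor, and the K\"unneth decomposition controlling the two-sided orthogonality there, is where the real work lies. One could instead bypass generation by verifying the Iyama--Oppermann self-injectivity criterion for $\Pi_{d_1+d_2+1}(A)$, but computing the higher preprojective algebra of a tensor product is itself delicate, so I would keep the derived-category route as the primary line of attack.
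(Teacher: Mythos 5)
The paper does not prove this corollary at all: it is imported verbatim from \cite[Corollary 1.5]{herschend2011n}, where the proof runs through the characterisation of $l$-homogeneous $d$-representation finite algebras as twisted fractionally Calabi--Yau algebras of global dimension at most $d$, a condition that is visibly preserved under $-\otimes_\K-$ over a perfect field because Serre functors multiply and shifts add. Your preparatory steps are correct and consistent with the ingredients of that proof: additivity of global dimension over a perfect field, the compatibility $\nu_{d_1+d_2}^{-i}(X\otimes_\K Y)\cong\nu_{d_1}^{-i}X\otimes_\K\nu_{d_2}^{-i}Y$, the resulting identification $\tau_{d_1+d_2}^{-i}A\cong\tau_{d_1}^{-i}\Lambda_1\otimes_\K\tau_{d_2}^{-i}\Lambda_2$ for $A=\Lambda_1\otimes_\K\Lambda_2$ together with vanishing for $i\ge l$, the $l$-homogeneity of $A$, and the K\"unneth corner-term argument for rigidity of $M$ (granting higher AR duality and directedness of the $\tau_{d_j}^{-}$-orbits, which are standard for $d_j$-representation finite algebras).

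However, the key lemma you propose for the maximality half is false, so the hard part of your proof fails. The external product $\add\{U_1\otimes_\K U_2\mid U_j\in\mathcal{U}_j\}$ is never $(d_1+d_2)$-cluster tilting when $d_1,d_2\ge 1$; it is not even rigid. Indeed, $U=\Lambda_1\otimes_\K\Lambda_2$ and $V=\nu_{d_1}\Lambda_1\otimes_\K\Lambda_2$ both lie in it, and by the very K\"unneth formula you use, $\Hom_{\mathcal{D}^b(A\mathrm{-mod})}(U,V[d_1])\cong\DD\Lambda_1\otimes_\K\Lambda_2\neq 0$ with $0<d_1<d_1+d_2$: this is exactly the corner term you killed in the diagonal case, and it survives for off-diagonal pairs because the directedness constraint coming from the second factor disappears. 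For the same reason, intersecting the external product with $\mathrm{mod}\,A$ yields all mixed modules $\tau_{d_1}^{-i}\Lambda_1\otimes_\K\tau_{d_2}^{-j}\Lambda_2$ with $i\neq j$, which do not lie in $\add M$. What is actually needed is cluster-tiltingness of the \emph{diagonal} orbit category $\add\{\nu_{d_1+d_2}^{i}A\mid i\in\Z\}$, and that is not an abstract statement about tensor products of cluster tilting subcategories: it is equivalent to the corollary itself, and any proof of it must use the common homogeneity parameter $l$, which your maximality sketch never invokes. The statement genuinely fails without a common $l$: for instance $\K A_2\otimes_\K\K A_2$ (the commutative square, each factor $1$-representation finite but not homogeneous) is not $2$-representation finite, as its $3$-preprojective algebra is not self-injective. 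A repair that stays inside your framework is to replace the false lemma by Iyama's criterion (see \cite{Iyama2011clustertiltinghigheraus}): for $\gldim A\le d$, the algebra $A$ is $d$-representation finite with $d$-cluster tilting module $\bigoplus_{i\ge 0}\tau_d^{-i}A$ if and only if $\DD A\in\add\bigl(\bigoplus_{i\ge 0}\tau_d^{-i}A\bigr)$; your computations already give $\DD A\cong\DD\Lambda_1\otimes_\K\DD\Lambda_2\cong\tau_d^{-(l-1)}A$, so this criterion closes the argument and renders the direct rigidity/maximality verification unnecessary.
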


\begin{myrem}\label{remark - nakayama permutation tensor product}
	In the situation of Corollary~\ref{corollary - A x B is n_1 + n_2 RF} we have that
	\begin{equation*}
		\tau_{d_1 + d_2}^{-i}(\Lambda e_s \otimes_\K \Lambda_2 e_t) = \tau_{d_1}^{-i}(\Lambda_1 e_s)\otimes_\K \tau_{d_2}^{-i}(\Lambda_2 e_t)
	\end{equation*}
	for $0\le i\le l$, $s\in I_1$ and $t\in I_2$. Taking $i=l$ gives the Nakayama permutation $\sigma: I_1\times I_2\to I_1\times I_2$, sending $\sigma(s, t) = (\sigma_1(s), \sigma_2(t))$, where $\sigma_1$ and $\sigma_2$ are the Nakayama permutations for $\Lambda_1$ and $\Lambda_2$ respectively.
\end{myrem}

The classification of (1-)representation finite species is done by \cite{dlab1975algebras}. It is stated as the following theorem.

\begin{mythm}\cite[Theorem B]{dlab1975algebras}\label{theorem - dlab ringel}
	A species $S$ is representation finite if the diagram of $S$ is a Dynkin diagram.
\end{mythm}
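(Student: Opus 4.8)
The plan is to reduce representation finiteness to the finiteness of the root system attached to $\Delta$, exploiting reflection functors and the Coxeter transformation. First I would attach to the acyclic species $S$ (acyclic since a Dynkin diagram is a tree) its symmetrized Tits form. Writing $d_i = \dim_\K D_i$ and $m_\alpha = \dim_\K M_\alpha$, one builds a symmetric bilinear form on $\Z^{Q_0}$ whose Gram matrix is the symmetrizable Cartan matrix of the valued graph $\Delta$. The hypothesis that $\Delta$ is Dynkin is, by the classical classification of symmetrizable generalized Cartan matrices of finite type, equivalent to this form being positive definite; in particular the associated Weyl group $W$ is finite and there are only finitely many real roots.

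Next I would set up the homological interpretation. Since $T(S)$ is hereditary, for finite-dimensional modules $U, V$ with dimension vectors $\underline{\dim}\,V = (\dim_{D_i} e_i V)_i$ the Euler form
\begin{equation*}
	\langle \underline{\dim}\,U, \underline{\dim}\,V\rangle = \dim_\K \Hom_{T(S)}(U,V) - \dim_\K \mathrm{Ext}^1_{T(S)}(U,V)
\end{equation*}
coincides with the non-symmetrized bilinear form read off from $\Delta$ and the orientation, where the Casimir and dualising data of the preliminaries identify $\mathrm{Ext}^1$ with the cokernel of the map assembled from the $M_\alpha$. Its symmetrization is the positive definite Tits form $q$. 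For an indecomposable $V$ one has $\End_{T(S)}(V)$ local, and a standard argument yields $q(\underline{\dim}\,V) > 0$; integrality together with positive definiteness then forces $\underline{\dim}\,V$ to be a positive real root of $\Delta$.

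The core of the argument is the construction of the Bernstein--Gelfand--Ponomarev reflection functors for species. For a sink (resp. source) $i$ of $Q$ I would define $S_i^{+}$ (resp. $S_i^{-}$) between the module categories of $S$ and of the species $s_i S$ obtained by reversing all arrows incident to $i$; here the dualising isomorphisms $\b_{1,l}, \b_{2,r}$ from the preliminaries are precisely what transports the bimodule structure. These functors restrict to mutually inverse equivalences between the indecomposables not isomorphic to the simple at $i$, and on dimension vectors they act by the simple reflection $s_i \in W$. Choosing a source-to-sink ordering of the vertices, available because $\Delta$ is a tree, I compose them into the Coxeter functor $C^{+}$, whose effect on dimension vectors is the Coxeter element $c \in W$. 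Since $W$ is finite, $c$ has finite order, so iterating $C^{+}$ on any indecomposable eventually annihilates it; tracking the step at which this happens shows every indecomposable arises from a simple by applying finitely many reflection functors. Hence $V \mapsto \underline{\dim}\,V$ is a bijection from indecomposables to positive roots, and finiteness of the root system gives representation finiteness.

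The main obstacle is the species-theoretic bookkeeping inside the reflection functors: unlike the quiver case over an algebraically closed field, the data attached to arrows are morphisms of bimodules over the possibly noncommutative division algebras $D_i$, so one must check that $S_i^{\pm}$ are well defined, exact on the relevant sequences, mutually inverse, and act as claimed on dimension vectors. This is exactly where the Casimir element and the morphism $\mathfrak{b}$ of the preliminaries replace the matrix transpose of the quiver setting, and verifying their compatibility with the reflections is the technical heart; by contrast the equivalence \emph{positive definite} $\Leftrightarrow$ \emph{Dynkin} and the finiteness of the roots are routine finite-dimensional linear algebra.
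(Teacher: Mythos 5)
This statement is not proved in the paper at all: it is imported with a citation to Dlab--Ringel (Theorem B of \cite{dlab1975algebras}), so there is no internal argument to compare against. Your sketch is, in outline, precisely the argument of that cited source: reflection functors and Coxeter functors for species/valued graphs, the action of simple reflections on dimension vectors, finiteness of the Weyl group in the Dynkin case, and the resulting bijection between indecomposables and positive roots. Two small corrections to the sketch itself. First, your intermediate claim that ``a standard argument yields $q(\underline{\dim}\,V)>0$'' for indecomposable $V$ is, in its usual form, the geometric Tits/orbit-dimension argument, which is not available over a ground field that is not algebraically closed (and the $D_i$ here are division algebras); in Dlab--Ringel the root property is instead a \emph{consequence} of the Coxeter functor machinery (every indecomposable is preprojective, has division-ring endomorphisms and vanishing self-extensions), which your later steps supply anyway, so that sentence should be deleted or rerouted rather than relied upon. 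Second, the admissible source-to-sink ordering used to form the Coxeter functor needs only acyclicity of the orientation, not that $\Delta$ is a tree. With those adjustments your proposal is a faithful reconstruction of the proof in the reference the paper points to.
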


Let $\K\subseteq F\subseteq G$ be division $\K$-algebras with the valuation $k=\dim_\K G / \dim_\K F$. All representation finite species are given in Figure~\ref{Figure - Dynkin Diagrams} with their diagrams shown in Figure \ref{Figure - Dynkin Diagrams 1}. We omit writing out the orientation in Figure~\ref{Figure - Dynkin Diagrams} since every possible orientation is representation finite species with the same diagram for each type.

\begin{figure}[ht]
	\vspace{-0.5em}\begin{equation*}
		\begin{aligned}
			&\begin{tikzcd}
			(A_n) & F \arrow["F", -, r] & F \arrow["F", r, -] & \cdots \arrow["F", r, -] & F \arrow["F", r, -] & F \\
			\end{tikzcd} \\[-2em]
			&\begin{tikzcd}
			(B_n) & F & G \arrow["G"', -, l] \arrow["G", r, -] & \cdots \arrow["G", r, -] & G \arrow["G", r, -] & G \\
			\end{tikzcd} \\[-2em]
			&\begin{tikzcd}
			(C_n) & G \arrow["G", -, r] & F \arrow["F", r, -] & \cdots \arrow["F", r, -] & F \arrow["F", r, -] & F \\
			\end{tikzcd} \\[-2em]
			&\begin{tikzcd}[row sep = 0em]
			& F \\
			(D_n) & & F \arrow["F", -, r] \arrow["F"', lu, -] \arrow["F"', ld, -] & \cdots \arrow["F", r, -] & F \arrow["F", r, -]& F \\
			&F \\
			\end{tikzcd} \\[-0.5em]
			&\begin{tikzcd}[row sep=0em]
			& F \arrow["F", r, -] & F \arrow["F", r, -] & F \arrow["F", r, -] \arrow["F", dd, -] & F \arrow["F", r, -] & F \\
			(E_6) \\
			& & & F \\
			\end{tikzcd} \\[-0.3em]
			&\begin{tikzcd}[row sep=0em]
			& F \arrow["F", r, -] & F \arrow["F", r, -] & F \arrow["F", r, -] \arrow["F", dd, -] & F \arrow["F", r, -] & F \arrow["F", r, -] & F \\
			(E_7) \\
			& & & F \\
			\end{tikzcd} \\[-0.3em]
			&\begin{tikzcd}[row sep=0em]
			& F \arrow["F", r, -] & F \arrow["F", r, -] & F \arrow["F", r, -] \arrow["F", dd, -] & F \arrow["F", r, -] & F \arrow["F", r, -] & F \arrow["F", r, -] & F \\
			(E_8) \\
			& & & F \\
			\end{tikzcd} \\[-0.3em]
			&\begin{tikzcd}
			(F_4) & G \arrow["G", r, -] & G \arrow["G", r, -] & F \arrow["F", r, -] & F \\
			\end{tikzcd} \\[-2em]
			&\begin{tikzcd}
			(G_2) & G \arrow["G", r, -] & F
			\end{tikzcd}
		\end{aligned}
	\end{equation*}
	\caption{Description of species over Dynkin diagrams}\label{Figure - Dynkin Diagrams}
\end{figure}
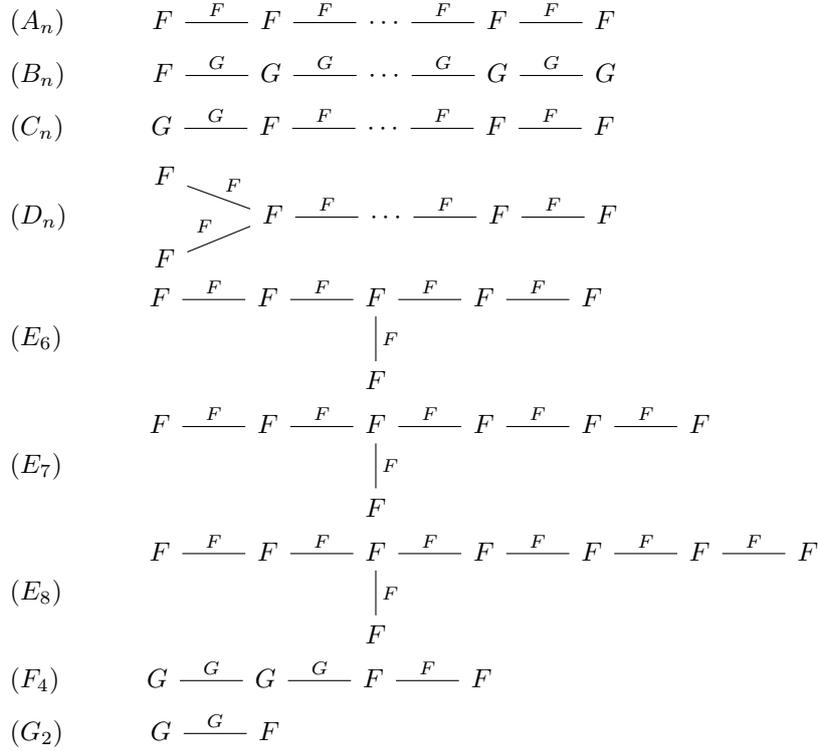

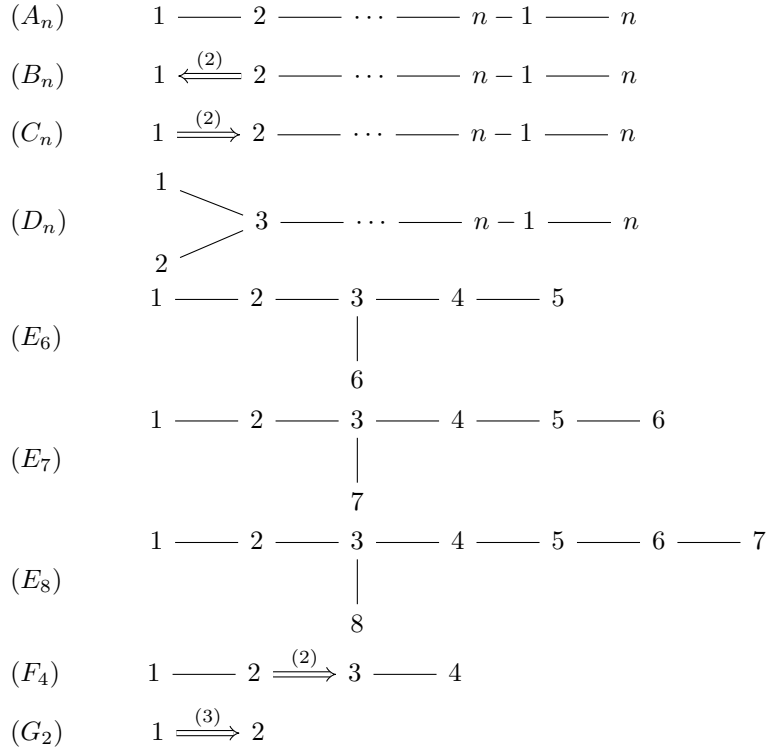
\begin{figure}[ht]
	\vspace{-0.5em}\begin{equation*}
		\begin{aligned}
		&\begin{tikzcd}
		(A_n) & 1 \arrow[-, r] & 2 \arrow[r, -] & \cdots \arrow[r, -] & n-1 \arrow[r, -] & n \\
		\end{tikzcd} \\[-2em]
		&\begin{tikzcd}
		(B_n) & 1 & 2 \arrow["(2)"', l, Rightarrow] \arrow[r, -] & \cdots \arrow[r, -] & n-1 \arrow[r, -] & n \\
		\end{tikzcd} \\[-2em]
		&\begin{tikzcd}
		(C_n) & 1 \arrow["(2)", r, Rightarrow] & 2 \arrow[r, -] & \cdots \arrow[r, -] & n-1 \arrow[r, -] & n \\
		\end{tikzcd} \\[-2em]
		&\begin{tikzcd}[row sep = 0em]
		& 1 \\
		(D_n) & & 3 \arrow[-, r] \arrow[lu, -] \arrow[ld, -] & \cdots \arrow[r, -] & n-1 \arrow[r, -]& n \\
		&2 \\
		\end{tikzcd} \\[-0.5em]
		&\begin{tikzcd}[row sep=0em]
		& 1 \arrow[r, -] & 2 \arrow[r, -] & 3 \arrow[r, -] \arrow[dd, -] & 4 \arrow[r, -] & 5 \\
		(E_6) \\
		& & & 6 \\
		\end{tikzcd} \\[-0.3em]
		&\begin{tikzcd}[row sep=0em]
		& 1 \arrow[r, -] & 2 \arrow[r, -] & 3 \arrow[r, -] \arrow[dd, -] & 4 \arrow[r, -] & 5 \arrow[r, -] & 6 \\
		(E_7) \\
		& & & 7 \\
		\end{tikzcd} \\[-0.3em]
		&\begin{tikzcd}[row sep=0em]
		& 1 \arrow[r, -] & 2 \arrow[r, -] & 3 \arrow[r, -] \arrow[dd, -] & 4 \arrow[r, -] & 5 \arrow[r, -] & 6 \arrow[r, -] & 7 \\
		(E_8) \\
		& & & 8 \\
		\end{tikzcd} \\[-0.3em]
		&\begin{tikzcd}
		(F_4) & 1 \arrow[r, -] & 2 \arrow["(2)", r, Rightarrow] & 3 \arrow[r, -] & 4 \\
		\end{tikzcd} \\[-2em]
		&\begin{tikzcd}
		(G_2) & 1 \arrow["(3)", r, Rightarrow] & 2
		\end{tikzcd}
		\end{aligned}
	\end{equation*}
	\caption{Dynkin Diagrams}\label{Figure - Dynkin Diagrams 1}
\end{figure}

The relation between $l$-homogeneous species and representation finite species is described in the following corollary. 

\begin{mycor}\label{Corollary - l homogeneous species}\cite[Corollary 3.7]{soderberg2022preprojective}
	Let $S$ be a representation finite species. Then $S$ is $l$-homogeneous if $Q$ is stable under $\sigma$. Moreover, for the different cases, the integer $l$ is
	\begin{center}
		\begin{tabular}{c|c|c|c|c|c|c|c|c|c}
			$\Delta$ & $A_n$ & $B_n$ & $C_n$ & $D_n$ & $E_6$ & $E_7$ & $E_8$ & $F_4$ & $G_2$ \\\hline
			$l$ & $\frac{n+1}{2}$ & $n$ & $n$ & $n-1$ & $6$ & $9$ & $15$ & $6$ & $3$
		\end{tabular}
	\end{center}
\end{mycor}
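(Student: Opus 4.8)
The plan is to translate the $1$-representation-finite data of $\Lambda := T(S)$ into the root combinatorics of the underlying Dynkin diagram $\Delta$. First I would record that $\gldim \Lambda \le 1$ and that $\Lambda$ is representation finite by Theorem~\ref{theorem - dlab ringel}, so $\Lambda$ is $1$-representation finite and the integers $l_i$ are computed from the relation $\Lambda e_{\sigma(i)} = \tau_1^{l_i - 1}\DD(e_i\Lambda)$. Writing $P_j$ and $I_j$ for the indecomposable projective and injective at vertex $j$, this reads $P_{\sigma(i)} = \tau_1^{l_i-1}I_i$, so $l_i$ is exactly the length of the Auslander--Reiten $\tau_1$-orbit running from $I_i$ to $P_{\sigma(i)}$.

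Next I would count and organise the indecomposables. By Dlab--Ringel the indecomposable $\Lambda$-modules are in bijection with the positive roots of the root system of type $\Delta$, hence number $|\Phi^+_\Delta| = nh/2$, where $n = |Q_0|$ and $h$ is the Coxeter number of $\Delta$. Since $\Lambda$ is hereditary and representation finite, its Auslander--Reiten quiver is a single (pre)projective component consisting of exactly $n$ $\tau_1$-orbits, each containing exactly one projective and exactly one injective. Thus the $l_i$ are precisely the lengths of these $n$ orbits, and $\sum_i l_i = nh/2$.

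The heart of the argument is to show that stability of $Q$ under $\sigma$ forces all $n$ orbits to share the common length $h/2$. For this I would use that $\sigma$ is realised by the diagram automorphism $-w_0$ of $\Delta$ together with the fractionally Calabi--Yau structure of $\mathcal{D}^b(\mathrm{mod}\,\Lambda)$ in Dynkin type, which pins down $\tau_1^{h/2}$ as the functor carrying each injective to the projective in the $\sigma$-matched orbit. When $Q$ is $\sigma$-stable this matching is compatible with the orientation and yields $P_{\sigma(i)} = \tau_1^{h/2-1}I_i$ uniformly in $i$, so $l_i = h/2$ for every $i$ and $S$ is $l$-homogeneous with $l = h/2$. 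Substituting the Coxeter numbers $A_n : n+1$, $B_n, C_n : 2n$, $D_n : 2n-2$, $E_6 : 12$, $E_7 : 18$, $E_8 : 30$, $F_4 : 12$, $G_2 : 6$ and halving then produces the table.

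The main obstacle is this homogeneity step: proving that $\sigma$-stability distributes the $nh/2$ indecomposables evenly among the $n$ orbits, rather than merely permuting orbits of possibly different lengths. Equivalently, one must identify the Nakayama permutation $\sigma$ with $-w_0$ and verify that half the Coxeter number of $\tau_1$-steps transports each injective to its paired projective. It is worth noting that when $h/2$ fails to be an integer --- as for $A_n$ with $n$ even --- this is exactly the obstruction to $Q$ being $\sigma$-stable, consistent with the hypothesis of the statement.
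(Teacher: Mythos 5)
Your skeleton (indecomposables $\leftrightarrow$ positive roots, one projective and one injective per $\tau_1$-orbit, $\sum_i l_i = nh/2$, then a symmetry argument forcing every orbit length to be $h/2$) is sound, and your consistency check for $A_n$ with $n$ even is a nice touch; note also that this paper contains no proof to compare against, since the statement is quoted from \cite{soderberg2022preprojective}, where it follows from an explicit computation of the Nakayama permutation. However, your crucial homogeneity step has a genuine gap: the fractional Calabi--Yau property of $\mathcal{D}^b(\Lambda\mathrm{-mod})$ in Dynkin type is the relation $\tau_1^{h}\cong[-2]$, and this does \emph{not} ``pin down'' $\tau_1^{h/2}$ --- an autoequivalence is not determined by its square (in type $D_4$, for instance, the derived Picard group contains triality, and square roots are far from unique). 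The refined statement you actually need, namely $\tau_1^{h/2}\cong \Phi_\sigma\circ[-1]$ where $\Phi_\sigma$ is the autoequivalence induced by the automorphism $\sigma$ of $Q$ (a functor that only exists once $Q$ is $\sigma$-stable), is precisely the content to be proven; as written, your argument asserts it rather than derives it.

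The gap can be closed with ingredients you already have, combined differently. First, $\sigma$-stability of $Q$ gives an automorphism of the species itself (the division algebras at $\sigma$-matched vertices coincide in the Dlab--Ringel list), hence of $\Lambda$ and of its AR quiver; this automorphism carries the orbit running from $I_i$ to $P_{\sigma(i)}$ onto the orbit running from $I_{\sigma(i)}$ to $P_{\sigma^2(i)}$, so $l_{\sigma(i)}=l_i$. Second, in $\mathcal{D}^b(\Lambda\mathrm{-mod})$ one has $\tau_1 P_i\cong I_i[-1]$, so $P_{\sigma(i)}\cong\tau_1^{l_i-1}I_i$ becomes $\tau_1^{l_i}P_i\cong P_{\sigma(i)}[-1]$; iterating and using that $\sigma$ is an involution (it is induced by $-w_0$) yields $\tau_1^{l_i+l_{\sigma(i)}}P_i\cong P_i[-2]$, and comparing with $\tau_1^{h}P_i\cong P_i[-2]$, together with the fact that $\tau_1$ acts freely on the infinite derived orbits, gives $l_i+l_{\sigma(i)}=h$. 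The two facts combine to $2l_i=h$ for every $i$, which is exactly your table. Finally, be careful with attributions: both the identification of $\sigma$ with the $-w_0$ permutation and the relation $\tau_1^{h}\cong[-2]$ are standard for path algebras over algebraically closed fields, but for types $B_n$, $C_n$, $F_4$, $G_2$ you are working with species, so these inputs must be quoted in their valued-quiver versions (Dlab--Ringel), with $h$ the Coxeter number of the non-simply-laced root system.
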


\begin{myrem}
	The Nakayama permutation can be extended uniquely from the vertices to $Q$ by \cite[Lemma 3.3]{soderberg2022preprojective}. The Nakayama permutation is given by \cite[Theorem 3.1]{soderberg2022preprojective}, in particular it is equal to the identity morphism when the diagram of $Q$ is non-simply laced or is $D_{2n}$ for some integer $n$. Moreover, the Nakayama automorphism of the corresponding preprojective algebra is explicitly given in \cite[Theorem 5.3]{soderberg2022preprojective}.
\end{myrem}

The upshot of Corollary~\ref{Corollary - l homogeneous species} is that if we combine it with Corollary~\ref{corollary - A x B is n_1 + n_2 RF} we get many examples of $2$-representation finite $\K$-algebras. Furthermore, in these cases their $3$-preprojective algebra can be described up to Morita equivalence as Jacobian algebras of a species with potential, see Proposition~\ref{proposition - preprojective algebra of tensor species}. By \cite[Proposition 10.11]{soderberg2022preprojective} we have a description of the Nakayama permutation, and Nakayama automorphism, of said examples, see Remark~\ref{remark - nakayama permutation tensor product}.

\section{Quiver with Cycles}\label{section - quiver with cycles}
In this section we introduce the notion of quiver with cycles which is inspired by \cite[Section 8]{HerschendOsamu2011quiverwithpotential} and we will restate the definitions in \cite[Section 6 and Section 7]{HerschendOsamu2011quiverwithpotential} adjusted to the setting of quivers with cycles. In Section~\ref{Section - Preproj, 2apr and cutmutation} we show how cuts correspond to $2$-APR-tilts of the Jacobian algebra. In this section we study the combinatorics of cuts. We use the results of \cite{HerschendOsamu2011quiverwithpotential} in the setting when the proofs only depend on the quiver with cycles and not the underlying potential.

\subsection{Quiver with Cycles and Cuts}
A cycle in a quiver $Q$ is a path that starts and ends at the same vertex.

\begin{mydef}
	We call the triple $Q = (Q_0, Q_1, Q_2)$ quiver with cycles, where $(Q_0, Q_1)$ is a quiver and $Q_2$ is a set of cycles in $(Q_0, Q_1)$. 
\end{mydef}

The main motivation for studying quivers with cycles stems from the following definition.

\begin{mydef}
	Let $(S, W)$ be a species with potential, where $S$ is a species over $Q$. We define the quiver with cycles associated to $(S, W)$, also denoted by $Q$, as a quiver with cycles where we let $Q_2$ to be a minimal set of paths in $Q$ such that
	\begin{equation*}
		W \in \bigoplus_{\alpha_n\alpha_{n-1}\cdots \alpha_1\in Q_2}\bigoplus_{i = 1}^nM_{\alpha_i}\otimes_D M_{\alpha_{n-1}}\otimes_D \dots \otimes_D M_{\alpha_1}\otimes_D M_{\alpha_n} \otimes_D \dots \otimes_D M_{\alpha_{i+1}}.
	\end{equation*}
\end{mydef}

Whenever $(S, W)$ is a species with potential, $Q$ is assumed to be the quiver with cycles defined as above.

\begin{myrem}
	Note that $Q_2$ is unique up to choosing a starting vertex for each $c\in Q_2$. The results in this article using $Q_2$ do not depend on the choice of minimal paths. Therefore, we will no longer mention the choice of minimal set of paths.
\end{myrem}

For a quiver with cycles $Q$ and a subset $C\subseteq Q_1$ we call the pair $(Q, C)$ a truncated quiver with cycles. In this case $C$ introduces a grading on $Q_1$ via
\begin{equation*}
	g_C(\alpha) = \begin{cases}
		1, & \text{if }\alpha\in C \\
		0, & \text{if }\alpha\in Q_1\backslash C
	\end{cases}
\end{equation*}
which naturally extends to paths by taking $g_C(\alpha_1 \alpha_2 \cdots \alpha_n) = g_C(\alpha_1) + g_C(\alpha_2) + \dots + g_C(\alpha_n)$, where $\alpha_1\alpha_2\cdots \alpha_n$ is a path in $Q$. Moreover, if $g_C(c) = 1$ for all $c\in Q_2$ we say that $C$ is a cut of $Q$. We will adopt the notation that we represent elements in $C$ as dotted lines in $Q$.

Let $\overline{Q}$ be the doubled quiver of $Q$, i.e. $\overline{Q}_0 = Q_0$ and $\overline{Q}_1 = \{\alpha: i\to j, \alpha^{-1}:j\to i \mid \alpha\in Q_1 \}$. A walk in $Q$ is defined to be a path in $\overline{Q}$. We extend $g_C$ to walks by setting $g_C(\alpha^{-1}) = -g_C(\alpha)$ where $\alpha\in Q_1$. This means that
\begin{equation*}
	g_C(\alpha_1^{\epsilon_1}\alpha_2^{\epsilon_2}\cdots \alpha_n^{\epsilon_n}) = \sum_{i=0}^n \epsilon_i g_C(\alpha_i)
\end{equation*}
where $\epsilon_i \in \{-1, 1\}$ for all $0\le i\le n$.

\begin{myrem}
	Let $(S, W)$ be a species with potential. If $C\subseteq Q_1$ is a cut then $W$ is homogeneous of degree $1$ with respect to $g_C$.
\end{myrem}

\begin{mydef}\label{definition - cut-mutation}\cite[Definition 6.10]{HerschendOsamu2011quiverwithpotential}
	Let $Q$ be a quiver with cycles and $C\subseteq Q_1$.
	\begin{enumerate}[label=(a)]
		\item We say that a vertex $x\in Q_0$ is a strict source of $(Q, C)$ if all arrows ending at $x$ belong to $C$ and all arrows starting at $x$ do not belong to $C$.
		\item For a strict source $x\in Q_0$ of $(Q, C)$, we define the subset $\mu_x^+(C)\subseteq Q_1$ by removing all arrows in $Q$ ending at $x$ from $C$ and adding all arrows in $Q$ starting at $x$ to $C$.
		\item Dually we define a strict sink $x$ and $\mu_x^-(C)\subseteq Q_1$. In particular, $\mu_x^-\circ \mu_x^+(C) = C$ and $\mu_x^+\circ \mu_x^-(C) = C$ whenever is defined.
	\end{enumerate}
\end{mydef}

We call these operations cut-mutation.

\begin{myrem}
	Note that for a cut $C\subseteq Q_1$ and a strict source $x\in Q_1$ we have $g_C(p) = g_{\mu^+_x(C)}(p)$ for all cycles $p$ in $Q$. In particular, this means that $\mu^+_x(C)$ is also a cut. Similarly, if $x$ is a strict sink then $\mu^-_x(C)$ is a cut.
\end{myrem}

\begin{mydef}\cite[Definition 6.5, Definition 6.15, Definition 7.4]{HerschendOsamu2011quiverwithpotential}
	Let $Q$ be a quiver with cycles. We say that
	\begin{enumerate}
		\item two cuts $C, C'\subseteq Q_1$ are compatible if for any cyclic walk $p$ the equality $g_C(p) = g_{C'}(p)$ holds. In this case we write $C\sim C'$.
		\item $Q$ is fully compatible if all cuts are compatible with each other.
		\item $Q$ is covered if every arrow in $Q_1$ appears in a cycle in $Q_2$.
		\item $Q$ has enough cuts if each arrow in $Q$ is contained in a cut.
	\end{enumerate}
\end{mydef}

\begin{myprop}\label{proposition - Q full comp and enouch cuts}
	Let $Q$ be a quiver with cycles which is fully compatible and let $C$ be a cut. Then the following are equivalent:
	\begin{enumerate}
		\item $Q_C$ is an acyclic quiver.
		\item $Q$ has enough cuts.
	\end{enumerate}
\end{myprop}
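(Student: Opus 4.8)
The plan is to prove the two implications separately. Full compatibility is needed only for $(2)\Rightarrow(1)$; the converse turns out to be a direct construction from a grading on $Q_0$ and needs only that $C$ is a cut with $Q_C$ acyclic. For $(2)\Rightarrow(1)$ I would argue by contradiction: suppose $Q$ has enough cuts but $Q_C$ contains a directed cycle $p$. Every arrow of $p$ lies in $Q_1\setminus C$, so $g_C(p)=0$. Pick any arrow $\alpha$ occurring in $p$; by enough cuts there is a cut $C'$ with $\alpha\in C'$. Since $p$ is a genuine directed cycle, every arrow contributes non-negatively to $g_{C'}$ and $\alpha$ contributes $1$, whence $g_{C'}(p)\ge 1$. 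But $p$ is in particular a cyclic walk, so full compatibility forces $g_{C'}(p)=g_C(p)=0$, a contradiction; hence $Q_C$ is acyclic.

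For $(1)\Rightarrow(2)$, assume $Q_C$ is acyclic and construct, for each arrow, a cut containing it. As $Q_C$ is finite and acyclic, set $\ell(v)$ to be the length of a longest directed path in $Q_C$ ending at $v$, so that $\ell(j)\ge\ell(i)+1$ for every arrow $\alpha\colon i\to j$ in $Q_1\setminus C$. The key observation is that if $\gamma\colon i\to j$ is a cut arrow lying on a cycle $c\in Q_2$, then the complementary path $p$ from $j$ to $i$ along $c$ satisfies $g_C(p)=g_C(c)-g_C(\gamma)=0$; thus $p$ lies entirely in $Q_C$, giving $\ell(i)\ge\ell(j)+\mathrm{length}(p)>\ell(j)$. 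So $\ell$ strictly increases along non-cut arrows and strictly decreases along cut arrows that lie on a cycle.

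Now fix an arrow $\alpha_0\colon i_0\to j_0$. If $\alpha_0$ lies on no cycle of $Q_2$, then $C\cup\{\alpha_0\}$ is again a cut, since adjoining $\alpha_0$ does not change $g_C$ on any $c\in Q_2$, and we are done. Otherwise define $f\colon Q_0\to\{0,1\}$ by $f(v)=1$ iff $\ell(v)\ge\ell(j_0)$, and put $g'(\alpha)=g_C(\alpha)+f(j)-f(i)$ for $\alpha\colon i\to j$. Using $\ell(j)>\ell(i)$ on non-cut arrows and the key observation on cut arrows, one checks $g'(\alpha)\in\{0,1\}$ for every arrow lying on a cycle of $Q_2$. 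Setting $C'=\{\alpha\text{ on a cycle of }Q_2 : g'(\alpha)=1\}$ then yields a cut, because $g_{C'}(c)=g_C(c)=1$ by telescoping of $f$ along each $c\in Q_2$. Finally $\ell(j_0)>\ell(i_0)$ gives $f(j_0)=1$ and $f(i_0)=0$, so $g'(\alpha_0)=1$ and $\alpha_0\in C'$; hence every arrow lies in a cut and $Q$ has enough cuts.

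The main obstacle is the construction in $(1)\Rightarrow(2)$: one must guarantee that the perturbed grading $g'$ takes values only in $\{0,1\}$, and the delicate case is precisely the cut arrows, where the naive estimate does not close. This is exactly resolved by the observation that a cut arrow on a cycle of $Q_2$ has its entire complementary arc inside $Q_C$, which simultaneously bounds $g'$ and makes $\ell$ strictly decrease along that arrow; the arrows lying on no cycle of $Q_2$ are unconstrained by the cut condition and so are handled by the trivial enlargement $C\cup\{\alpha_0\}$. It is worth noting that full compatibility is not used in this direction at all, so the content of the hypothesis is really concentrated in the implication $(2)\Rightarrow(1)$.
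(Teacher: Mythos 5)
Your proof is correct, but it follows a genuinely different route from the paper's. The paper handles both implications at once by citing \cite[Proposition~6.16]{HerschendOsamu2011quiverwithpotential}, which says that $Q_C$ is acyclic if and only if $Q_1=\bigcup_{C'\sim C}C'$; full compatibility then identifies this union with the union of \emph{all} cuts, which is precisely the statement of enough cuts. You instead reprove the content of that citation: your $(2)\Rightarrow(1)$ is the short compatibility argument, and your $(1)\Rightarrow(2)$ is a self-contained construction via the level function $\ell$ on the acyclic quiver $Q_C$, the observation that the complementary arc of a cut arrow lies entirely in $Q_C$, and the threshold perturbation $g'(\alpha)=g_C(\alpha)+f(t(\alpha))-f(s(\alpha))$, which telescopes to zero around every cycle of $Q_2$. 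This is essentially the mechanism underlying the cited result (compatible cuts differ from $g_C$ by such discrete gradients), so your version buys self-containedness plus the explicit observation that full compatibility enters only in $(2)\Rightarrow(1)$, while the paper's citation buys brevity and the slightly stronger conclusion that the cuts covering $Q_1$ can be taken compatible with $C$. One slip should be repaired: in the last step you assert $\ell(j_0)>\ell(i_0)$, which is only valid when $\alpha_0\notin C$; if $\alpha_0\in C$, your own key observation gives the reverse inequality. That case is harmless, since such an $\alpha_0$ already lies in the cut $C$ (and in fact $f(i_0)=f(j_0)=1$ then forces $g'(\alpha_0)=g_C(\alpha_0)=1$, so $\alpha_0\in C'$ regardless), but the reduction to $\alpha_0\notin C$ should be stated before running the construction.
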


\begin{proof}
	The statement that $Q_C$ is an acyclic quiver is equivalent to $Q_1 = \bigcup_{C'\sim C}C'$ by \cite[Proposition 6.16]{HerschendOsamu2011quiverwithpotential}. Now every cut satisfies $C'\sim C$ since $Q$ is fully compatible, so $Q_1 = \bigcup_{C'\sim C}C'$ means precisely that $Q$ has enough cuts.
\end{proof}

\begin{myrem}\label{remark - enouch cuts quiver acyclic}
	Note that if $Q$ has enough cuts, then by Proposition~\ref{proposition - Q full comp and enouch cuts} $Q_C$ is acyclic for every cut $C$.
\end{myrem}

\begin{mylemma}\label{lemma - Q suff cyclic and enouch comp}
	Let $Q$ be a quiver with cycles which is fully compatible, covered and has enough cuts and $C$ be a cut, then the following are true:
	\begin{enumerate}
		\item for each $\alpha\in Q_1$ there is a cycle $p$ containing $\alpha$ satisfying $g_C(p)= 1$.
		\item $Q_1 = \bigcup_{C \sim C'}C'$.
	\end{enumerate}
\end{mylemma}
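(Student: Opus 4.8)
The plan is to treat the two assertions separately, since each relies on a different part of the hypotheses and neither one requires the other.

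For (1), I would argue directly from the definitions. Fix $\alpha\in Q_1$. Since $Q$ is covered, there exists by definition a cycle $c\in Q_2$ in which $\alpha$ occurs. Because $C$ is a cut, the defining property of a cut gives $g_C(c)=1$. Hence $p:=c$ is a cycle through $\alpha$ with $g_C(p)=1$, which is exactly the claim. This step uses only that $Q$ is covered and that $C$ is a cut; neither full compatibility nor the existence of enough cuts enters.

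For (2), I would route the statement through the acyclicity criterion that has already been set up. Because $Q$ is fully compatible and has enough cuts, Proposition~\ref{proposition - Q full comp and enouch cuts} (equivalently, Remark~\ref{remark - enouch cuts quiver acyclic}) shows that $Q_C$ is acyclic. Then \cite[Proposition 6.16]{HerschendOsamu2011quiverwithpotential}, which asserts the equivalence of ``$Q_C$ is acyclic'' with ``$Q_1=\bigcup_{C'\sim C}C'$'', immediately yields the desired equality. Thus part (2) uses full compatibility and enough cuts, but not the covered hypothesis.

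The genuine combinatorial content—translating acyclicity of $Q_C$ into the statement that every arrow lies in some cut compatible with $C$—is packaged inside the cited \cite[Proposition 6.16]{HerschendOsamu2011quiverwithpotential} and the proof of Proposition~\ref{proposition - Q full comp and enouch cuts}, so there is no hard computation to carry out in this lemma itself. The only point I would be careful about is confirming that the three standing hypotheses align exactly with what these results require, and in particular that ``enough cuts'' is precisely the property which, under full compatibility, forces $Q_C$ to be acyclic for the given cut $C$.
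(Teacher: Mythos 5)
Your proof of part (1) coincides with the paper's: coveredness provides a cycle in $Q_2$ through $\alpha$, and the cut property gives $g_C = 1$ on that cycle. For part (2), though, you take a genuinely different (and more roundabout) route. The paper argues directly from the definitions: enough cuts means every arrow lies in \emph{some} cut $C'$, and full compatibility forces $C'\sim C$, so $Q_1\subseteq \bigcup_{C'\sim C}C'$; the reverse inclusion is trivial. You instead pass through the acyclicity criterion: enough cuts plus full compatibility gives that $Q_C$ is acyclic by Proposition~\ref{proposition - Q full comp and enouch cuts}, and then \cite[Proposition 6.16]{HerschendOsamu2011quiverwithpotential} converts acyclicity of $Q_C$ back into the equality $Q_1 = \bigcup_{C'\sim C}C'$. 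This is logically valid and not circular, since Proposition~\ref{proposition - Q full comp and enouch cuts} is established independently of this lemma; but note that its proof itself runs through the chain ``$Q_C$ acyclic $\Leftrightarrow$ $Q_1 = \bigcup_{C'\sim C}C'$ $\Leftrightarrow$ enough cuts'', so unrolled, your argument establishes the desired equality, converts it into acyclicity, and then converts back. The paper's two-line argument shows that (2) is purely definitional and needs neither the acyclicity equivalence nor any citation of \cite{HerschendOsamu2011quiverwithpotential} at this point, which is worth knowing if you want to track exactly which combinatorial facts are imported from that paper; your version buys nothing extra, though your closing observation about which hypotheses enter where (coveredness only in (1), full compatibility and enough cuts only in (2)) is accurate and matches the paper.
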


\begin{proof}
	\begin{enumerate}
		\item Since $C$ is cut we have that $g_C(c) = 1$ for every $c\in Q_2$. Since $Q$ is covered, every $\alpha\in Q_1$ is contained in at least one cycle in $Q_2$.
		\item That $Q$ has enough cuts implies that every arrow is contained in a cut by definition and now the assertion follows by using that $Q$ is fully compatible.
	\end{enumerate}
\end{proof}

\begin{mythm}\label{theorem - qwc fully, covered enough implies trans}
	Let $Q$ be a quiver with cycles which is fully compatible, covered and has enough cuts, then the set of all cuts of $Q$ is transitive under successive cut-mutations.
\end{mythm}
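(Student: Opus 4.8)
The plan is to fix one cut $C'$ as a reference and to connect every other cut $C$ to it by successive cut-mutations. Since $Q$ is fully compatible, every cut is compatible with $C'$, so $g_C-g_{C'}$ vanishes on every cyclic walk. As $Q$ is connected, this lets me integrate $g_C-g_{C'}$ to a height function $\mu=\mu_C:Q_0\to\Z$, unique up to an additive constant, with $g_C(\alpha)-g_{C'}(\alpha)=\mu(j)-\mu(i)$ for every arrow $\alpha:i\to j$. Conversely $\mu$ recovers $C$ via $g_C=g_{C'}+(\mu(j)-\mu(i))$, so the cuts compatible with $C'$ correspond bijectively, modulo a global shift of $\mu$, to the integer points of
\[ L=\{\,\mu:Q_0\to\Z \mid g_{C'}(\alpha)+\mu(j)-\mu(i)\in\{0,1\}\text{ for all }\alpha:i\to j\,\}, \]
where the constant function $\mu\equiv 0$ corresponds to $C'$ itself.

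Next I would record the dictionary between cut-mutation and moves on $L$. A direct check shows that lowering $\mu(x)$ by $1$ keeps $\mu$ in $L$ precisely when every arrow ending at $x$ has $g_C=1$ and every arrow starting at $x$ has $g_C=0$, i.e. exactly when $x$ is a strict source of $(Q,C)$ in the sense of Definition~\ref{definition - cut-mutation}; moreover the resulting height function is the one attached to $\mu_x^+(C)$. Dually, raising $\mu(x)$ by $1$ is allowed exactly at strict sinks and realises $\mu_x^-$. Thus transitivity of cut-mutation is equivalent to the statement that $L$ is connected under unit coordinate moves $\mu\mapsto\mu\pm e_x$ that stay inside $L$, keeping in mind that these moves are invertible by Definition~\ref{definition - cut-mutation}(c).

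To prove connectivity I would exploit that $L$ is closed under coordinatewise minimum and maximum, since the difference constraints defining it are preserved by $\wedge$ and $\vee$; after normalising the shift, $L$ is finite and has a least element $\widehat 0=\bigwedge L$. It then suffices to drive every $\mu\in L$ down to $\widehat 0$ by valid unit decreases, for then any two cuts are joined to the common cut corresponding to $\widehat 0$. I would argue by induction on $N(\mu)=\sum_{x}(\mu(x)-\widehat 0(x))$. If $N(\mu)>0$, set $S=\{x:\mu(x)>\widehat 0(x)\}\neq\emptyset$; the inductive step reduces to the claim that some vertex of $S$ is a strict source, since decreasing it stays above $\widehat 0$, stays in $L$, and lowers $N$.

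The crux is this claim, and it is where I expect the real work. Suppose no vertex of $S$ is a strict source. Each $x\in S$ then carries either an arrow $\alpha:u\to x$ with $g_C(\alpha)=0$ or an arrow $\alpha:x\to v$ with $g_C(\alpha)=1$; comparing with $\widehat 0\in L$ shows in either case that the other endpoint again lies in $S$. Following these arrows produces, since $S$ is finite, a closed walk $p$ inside $S$ whose forward steps are cut-arrows ($g_C=1$) and whose backward steps are non-cut arrows ($g_C=0$), so $g_C(p)$ equals the number of forward cut-steps. If that number were $0$, then $p^{-1}$ would be a directed cycle in $Q_C$, impossible since $Q_C$ is acyclic by Remark~\ref{remark - enouch cuts quiver acyclic}. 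Hence $p$ uses at least one cut-arrow $\alpha:i\to j$, and here I invoke that $Q$ is covered: by Lemma~\ref{lemma - Q suff cyclic and enouch comp}(1) such an $\alpha$ lies on a cycle of $Q_2$ with $g_C=1$, so $\alpha$ is the unique cut-arrow of that cycle and its complement is a directed path in $Q_C$ from $j$ to $i$. In $p^{-1}$ the arrow $\alpha$ is traversed backwards, from $j$ to $i$; replacing each such backward cut-step by the corresponding complementary $Q_C$-path turns $p^{-1}$ into a nonempty closed directed walk in $Q_C$, again contradicting acyclicity. This contradiction proves the claim, completes the induction, and with it the theorem. The single genuinely delicate point, which all three hypotheses conspire to supply, is ruling out such a ``blocking'' configuration of $S$: full compatibility yields the height function, covered-ness furnishes the complementary $Q_C$-paths, and enough cuts guarantees acyclicity of $Q_C$.
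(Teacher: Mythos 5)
Your argument is correct (granted a no-loop assumption, discussed below, which the paper also makes implicitly), but it is a genuinely different route from the paper's own proof. The paper's proof of Theorem~\ref{theorem - qwc fully, covered enough implies trans} is a pure reduction: Lemma~\ref{lemma - Q suff cyclic and enouch comp} is used to check that every cut makes $(Q,C)$ ``sufficiently cyclic'' with ``enough compatibles'', and the transitivity statement is then quoted wholesale from \cite[Theorem 7.2]{HerschendOsamu2011quiverwithpotential}. You instead re-prove that combinatorial core from scratch: full compatibility identifies the set of cuts with the integer solution set $L$ of the difference constraints $g_{C'}(\alpha)+\mu(j)-\mu(i)\in\{0,1\}$ modulo global shifts; cut-mutation becomes exactly the admissible unit coordinate moves; $L$ is a finite sublattice of $\Z^{Q_0}$ (difference constraints are closed under $\wedge$ and $\vee$), so one may descend to $\bigwedge L$; and the only obstruction to descending, a nonempty ``blocked'' set $S$, is destroyed by combining coveredness (Lemma~\ref{lemma - Q suff cyclic and enouch comp}(1)) with acyclicity of $Q_C$ (Remark~\ref{remark - enouch cuts quiver acyclic}). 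I checked the individual steps — the integration of $g_C-g_{C'}$ to a height function, the equivalence of unit moves with strict sources/sinks, the lattice closure and finiteness after normalisation, the endpoint-stays-in-$S$ computation against $\widehat 0\in L$, and both acyclicity contradictions — and they are sound. All three hypotheses are used in the same roles as in the cited result, so nothing is circular. What the paper's route buys is brevity; what yours buys is a self-contained proof and an explicit picture of the set of cuts as a distributive lattice in which mutation moves one coordinate by one, which is precisely the structure displayed in the paper's mutation lattices (Figures~\ref{figure - theorem 7.6 applied} and~\ref{figure - Ex 1}).

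One shared caveat is worth making explicit. Your dictionary (unit moves $\Leftrightarrow$ strict sources) and your final nonemptiness claim (the complement of a cut arrow in its covering cycle is a path of positive length) both fail at loops: a loop lies in every cut, so its vertex is never a strict source or sink, and a $1$-cycle in $Q_2$ has empty complement. This is not a defect of your proof relative to the paper, because with $1$-cycles allowed in $Q_2$ the theorem itself is false: take two vertices carrying loops $\ell_1,\ell_2$, joined by arrows $a,a'$ forming a $2$-cycle, with $Q_2=\{\ell_1,\ell_2,a'a\}$; this quiver with cycles is fully compatible, covered and has enough cuts, yet its two cuts $\{\ell_1,\ell_2,a\}$ and $\{\ell_1,\ell_2,a'\}$ admit no cut-mutation at any vertex. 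In the intended setting, where $Q_2$ is the support of a potential and hence consists of cycles of length at least $2$, your own style of argument shows the hypotheses force $Q$ to be loop-free: a loop would lie in every cut (by acyclicity of $Q_C$), so every other arrow of a cycle covering it would lie in no cut, contradicting enough cuts. Stating this reduction at the start of your proof would close the gap.
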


\begin{proof}
	Lemma~\ref{lemma - Q suff cyclic and enouch comp} implies that $(Q, C)$ is sufficiently cyclic and has enough compatibles, in the sense of \cite[Definition 6.15]{HerschendOsamu2011quiverwithpotential}, for every cut $C$. The result then follows by applying \cite[Theorem 7.2]{HerschendOsamu2011quiverwithpotential}.
\end{proof}

\subsection{Canvas of a Quiver with Cycles}We will now introduce a CW complex for a quiver with cycles $Q$ as in \cite{HerschendOsamu2011quiverwithpotential} in order to give sufficient conditions for when $Q$ is fully compatible.

\begin{mydef}\cite[Definition 8.1]{HerschendOsamu2011quiverwithpotential}
	Let $Q$ be a quiver with cycles. We define the CW complex $X_Q$ as follows:
	\begin{enumerate}
		\item The $0$-cells are the vertices of $Q$, i.e. $X_Q^0 := Q_0$.
		\item The $1$-cells are indexed by the arrows $\alpha\in Q_1$ with attaching maps $\phi_\alpha: \{0, 1\}\to Q_0$ defined by $\phi_\alpha(0) = s(\alpha)$ and $\phi_\alpha(1) = t(\alpha)$.
		\item the $2$-cells are indexed by the cycles $c\in Q_2$ with attaching maps $\phi_c: S^1 \to X_S$ defined by
		\begin{equation*}
			\phi_c\left(\cos\left(\frac{2\pi}{s}(i+t) \right), \sin\left(\frac{2\pi}{s}(i+t) \right) \right)
		\end{equation*}
		for each integer $0\le i< s$ and a real number $0\le t<1$.
	\end{enumerate}
\end{mydef}

\begin{mydef}\cite[Definition 8.5]{HerschendOsamu2011quiverwithpotential}
	We say that a quiver with cycles $Q$ is simply connected if $X_Q$ is simply connected.
\end{mydef}

\begin{myrem}
	Note that if a quiver with cycles $Q$ is a tree, then $Q_2$ is empty. Moreover, it directly follows that $X_Q$ is simply connected.
\end{myrem}

\begin{myprop}\label{proposition - simply conn implies fully comp}\cite[Proposition 8.6]{HerschendOsamu2011quiverwithpotential}
	Every simply connected quiver with cycles is fully compatible.
\end{myprop}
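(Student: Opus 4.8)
The plan is to translate the compatibility condition into the language of cellular cohomology of $X_Q$ with integer coefficients. The grading $g_C$ assigns to each arrow $\alpha\in Q_1$ an integer $g_C(\alpha)\in\{0,1\}$; orienting each $1$-cell of $X_Q$ along its arrow, this is exactly a cellular $1$-cochain $g_C\in C^1(X_Q;\Z)$. A cyclic walk $p$ traces a closed loop in the $1$-skeleton, and writing $p=\alpha_1^{\epsilon_1}\cdots\alpha_n^{\epsilon_n}$ the associated $1$-chain $z_p=\sum_i\epsilon_i\alpha_i$ is a cycle (its boundary vanishes because the walk is closed), with $g_C(p)=\langle g_C,z_p\rangle$. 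Thus compatibility of two cuts $C,C'$ is precisely the assertion that $g_C-g_{C'}$ pairs to zero with every such $z_p$.

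First I would compute the coboundary $\delta g_C\in C^2(X_Q;\Z)$. Each $2$-cell is indexed by a cycle $c\in Q_2$ whose cellular boundary, as a $1$-chain, is $c$ itself (all traversals are forward, so every incident $1$-cell occurs with coefficient $+1$); hence $(\delta g_C)(c)=g_C(\partial c)=g_C(c)$. Since $C$ is a cut, $g_C(c)=1$ for every $c\in Q_2$, so $\delta g_C$ is the constant $1$-valued $2$-cochain, crucially independent of $C$. Consequently, for any two cuts, $\delta(g_C-g_{C'})=0$, i.e. $g_C-g_{C'}$ is a $1$-cocycle.

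The topological input enters next. Since $Q$ is simply connected, $\pi_1(X_Q)=0$, so by the Hurewicz theorem $H_1(X_Q;\Z)=0$; as $X_Q$ is connected its $H_0$ is free, and the universal coefficient theorem then gives $H^1(X_Q;\Z)\cong\Hom(H_1(X_Q;\Z),\Z)=0$. Therefore every $1$-cocycle is a coboundary, and in particular $g_C-g_{C'}=\delta h$ for some $0$-cochain $h\colon Q_0\to\Z$. Evaluating on a cyclic walk and using that $z_p$ is a cycle,
\begin{equation*}
	g_C(p)-g_{C'}(p)=\langle \delta h,z_p\rangle=\langle h,\partial z_p\rangle=0,
\end{equation*}
so $g_C(p)=g_{C'}(p)$. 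As $C,C'$ and $p$ were arbitrary, all cuts are compatible and $Q$ is fully compatible.

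The routine bookkeeping, namely fixing orientations of cells so that the boundary and coboundary maps carry the intended signs, and checking that a closed walk really yields a boundaryless $1$-chain, is the only place demanding care. The conceptual heart is the observation that the cut condition pins down $\delta g_C$ uniformly, so that differences of cuts are automatically $1$-cocycles and die against cycles once $H^1(X_Q;\Z)$ vanishes.
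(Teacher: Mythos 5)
Your proof is correct. Be aware, though, that the paper does not prove this proposition at all --- it is imported from Herschend--Iyama \cite[Proposition 8.6]{HerschendOsamu2011quiverwithpotential}, with only a remark that the cited proof depends on the quiver with cycles and not on the potential. The argument there rests on the same key observation as yours --- for two cuts $C,C'$ the difference $g_C-g_{C'}$ evaluates to zero on the boundary cycle of every $2$-cell --- but it works directly with cyclic walks and their homotopy classes: the difference is unchanged by inserting backtracks and by sliding a walk across a $2$-cell, hence induces a homomorphism $\pi_1(X_Q)\to\Z$, which must vanish when $X_Q$ is simply connected. You instead package this in cellular cohomology: the cut condition pins $\delta g_C$ to the all-ones $2$-cochain independently of $C$, so differences of cut gradings are $1$-cocycles, and $H^1(X_Q;\Z)=0$ by Hurewicz plus universal coefficients kills them against the $1$-cycles $z_p$. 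Your route is shorter and leans on standard machinery; the original is elementary and self-contained; both exhibit the same underlying fact, namely that the obstruction to full compatibility lives in $H^1(X_Q;\Z)\cong \Hom(\pi_1(X_Q),\Z)$. One small imprecision to fix: the cellular boundary of the $2$-cell attached along a cycle $c$ is not ``every incident $1$-cell with coefficient $+1$'' but each arrow with coefficient equal to its multiplicity in $c$, since a single arrow may be traversed several times by one cycle; this does not affect the identity $(\delta g_C)(c)=g_C(c)$, which is all your argument uses.
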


\begin{myrem}
	Note that the proof of Proposition~\ref{proposition - simply conn implies fully comp} in \cite{HerschendOsamu2011quiverwithpotential} only depends on the quiver with cycle $Q$ and not the actual potential.
\end{myrem}

Combining Proposition~\ref{proposition - simply conn implies fully comp} and Theorem~\ref{theorem - qwc fully, covered enough implies trans} we get the following result.

\begin{mythm}\label{theorem - simply con implies trans}
	Let $Q$ be a simply connected quiver with cycles, which is covered and has enough cuts. Then the set of all cuts of $Q$ is transitive under successive cut-mutations.
\end{mythm}

Note that enough cuts can be checked by verifying that $Q_C$ is acyclic for any cut $C$ due to Proposition~\ref{proposition - Q full comp and enouch cuts}.

\section{Tensor Products and Quivers with Cycles}\label{section - tensor products of quiver with cycles}
In this section we provide a way of producing examples using tensor products where Theorem~\ref{theorem - simply con implies trans} applies. In \cite{soderberg2024mutation} we describe the $3$-preprojective algebra of the tensor product of two acyclic species using a species with potential, which is Morita equivalent to a $\K$-species with potential. In this case, under certain assumptions, we prove that all cuts are transitive under successive cut-mutations.

We start by constructing a quiver with cycles that corresponds to the $3$-preprojective algebra of the tensor product of two acyclic species.

\subsection{Quiver with Cycles for Tensor Product}
For a quiver $Q$ we define $Q^*$ to be the opposite quiver, i.e. $Q^*_0 = Q_0$ and $Q_1^* = \{\alpha^*: j\to i \mid \alpha:i\to j\in Q_1\}$. For quivers $Q^1$ and $Q^2$ we define $Q^1\tilde{\otimes} Q^2$ as the quiver with cycles given by
\begin{equation*}
	\begin{aligned}
		(Q^1\tilde{\otimes} Q^2)_0 &= Q^1_0\times Q^2_0, \\
  		(Q^1\tilde{\otimes} Q^2)_1 &= (Q^1_0\times Q^2_1) \coprod (Q^1_1\times Q^2_0)\coprod (Q^{1*}_1\times Q^{2*}_1), \\
  		(Q^1\tilde{\otimes}Q^2)_2 &= \{(\alpha^*, \beta^*)(\alpha, t(\beta))(s(\alpha), \beta), (\alpha^*, \beta^*)(t(\alpha), \beta)(\alpha, s(\beta)) \mid \alpha\in Q^1_1, \beta\in Q^2_1 \}.
	\end{aligned}
\end{equation*}
We extend the definition of source and target, $s$ and $t$, to be the identity on $Q_0$ for any given quiver $Q$, and define
\begin{equation*}
	s, t: (Q^1\tilde{\otimes} Q^2)_1 \to (Q^1\tilde{\otimes} Q^2)_0
\end{equation*}
by $s(\alpha_1, \alpha_2) = (s(\alpha_1), s(\alpha_2))$ and $t(\alpha_1, \alpha_2) = (t(\alpha_1), t(\alpha_2))$.

Note that the sets
\begin{equation}\label{eq - standard cuts}
	\begin{aligned}
		C_1 &= Q_0^1\times Q_1^2 \\
		C_2 &= Q_1^1 \times Q_0^2 \\
		C_3 &= (Q_1^1)^* \times (Q_1^2)^*
	\end{aligned}
\end{equation}
are three distinguished cuts for $Q^1\tilde{\otimes} Q^2$.

\begin{mylemma}\label{lemma - Q^1 x Q^2 covered enough cuts}
	The quiver with cycles $Q^1\tilde{\otimes}Q^2$
	\begin{enumerate}
		\item is covered,
		\item has enough cuts.
	\end{enumerate}
\end{mylemma}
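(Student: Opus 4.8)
The plan is to exploit the single structural feature that drives both parts: every $2$-cell of $Q^1\tilde\otimes Q^2$ is a triangle containing exactly one arrow from each of the three summands $Q^1_0\times Q^2_1$, $Q^1_1\times Q^2_0$ and $(Q^1_1)^*\times (Q^2_1)^*$. First I would make this explicit by unpacking the two families of cycles. For $\alpha\colon i\to i'$ in $Q^1_1$ and $\beta\colon j\to j'$ in $Q^2_1$, the first cycle traverses $(i,j)\xrightarrow{(s(\alpha),\beta)}(i,j')\xrightarrow{(\alpha,t(\beta))}(i',j')\xrightarrow{(\alpha^*,\beta^*)}(i,j)$ and the second traverses $(i,j)\xrightarrow{(\alpha,s(\beta))}(i',j)\xrightarrow{(t(\alpha),\beta)}(i',j')\xrightarrow{(\alpha^*,\beta^*)}(i,j)$; in both, the three arrows lie in three distinct summands.

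For part (2) this observation does almost all the work. Each of the distinguished sets $C_1,C_2,C_3$ of \eqref{eq - standard cuts} is precisely the arrow set of one summand, so the computation above yields $g_{C_k}(c)=1$ for every $c\in Q_2$ and every $k\in\{1,2,3\}$; hence each $C_k$ is a cut. Since $(Q^1\tilde\otimes Q^2)_1=C_1\sqcup C_2\sqcup C_3$ by definition, every arrow lies in one of these three cuts, which is exactly the statement that $Q^1\tilde\otimes Q^2$ has enough cuts.

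For part (1) I would argue type by type. A third-summand arrow $(\alpha^*,\beta^*)$ occurs in the two cycles attached to $(\alpha,\beta)$, so is covered. A second-summand arrow $(\alpha,y)$ with $\alpha\in Q^1_1$ and $y\in Q^2_0$ is covered as soon as one produces some $\beta\in Q^2_1$ with $y\in\{s(\beta),t(\beta)\}$: if $y=t(\beta)$ it appears as $(\alpha,t(\beta))$ in the first cycle of $(\alpha,\beta)$, and if $y=s(\beta)$ as $(\alpha,s(\beta))$ in the second. The first-summand arrows $(x,\beta)$ are handled symmetrically, using an arrow $\alpha\in Q^1_1$ incident to $x$.

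The one genuine point to check is therefore the existence of such an incident arrow, i.e.\ that every vertex of $Q^2$ (resp.\ $Q^1$) is the source or target of some arrow. Here I would invoke the standing hypothesis that all quivers are finite and connected: a connected quiver with at least two vertices has no isolated vertex, so every vertex is incident to an arrow. The degenerate single-vertex factors produce no $2$-cells at all and fall outside the cases of interest, so may be excluded. This incidence condition is the sole obstacle in the argument; everything else is bookkeeping of the three arrow types, so I expect the main care to go into stating this point cleanly rather than into any substantive difficulty.
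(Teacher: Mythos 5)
Your proof is correct and follows essentially the same route as the paper: part (2) comes from the observation that every triangle in $(Q^1\tilde\otimes Q^2)_2$ contains exactly one arrow from each of $C_1$, $C_2$, $C_3$, which partition the arrow set, and part (1) from exhibiting, for each arrow type, a cycle containing it. You are in fact more careful than the paper, whose proof of (1) asserts that every arrow appears in a cycle ``by definition'' and thereby silently uses exactly the incidence point you isolate: every vertex of each factor must meet an arrow, which follows from connectedness except for a trivial one-vertex factor, where part (1) genuinely fails whenever the other factor has arrows.
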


\begin{proof}$\space$
	\begin{enumerate}
		\item Since every arrow in $(Q^1\tilde{\otimes}Q^2)_1$ is included in $(Q^1\tilde{\otimes}Q^2)_2$ by definition we have that $Q^1\tilde{\otimes}Q^2$ is covered.
		\item Every cycle in $(Q^1\tilde{\otimes}Q^2)_2$ consists of one arrow from $C_1$, one arrow from $C_2$ and one arrow from $C_3$. Since $C_1$, $C_2$ and $C_3$ are cuts it follows that $Q^1\tilde{\otimes}Q^2$ has enough cuts.
	\end{enumerate}
\end{proof}

To apply Theorem~\ref{theorem - simply con implies trans} we need to show that $Q^1\tilde{\otimes}Q^2$ is simply connected. This is not true in general and thus we need more assumptions.

\begin{mylemma}\label{lemma - finite tress product trans}
	For trees $Q^1$ and $Q^2$ the following are true.
	\begin{enumerate}
		\item $X_{Q^1\tilde{\otimes}Q^2}$ and $X_{Q^1}\times X_{Q^2}$ are homeomorphic.
		\item The set of all cuts on $Q^1\tilde{\otimes} Q^2$ is transitive under successive cut-mutations.
	\end{enumerate}
\end{mylemma}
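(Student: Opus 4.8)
The plan is to prove (1) by exhibiting $X_{Q^1\tilde{\otimes}Q^2}$ as a cellular subdivision of the product complex $X_{Q^1}\times X_{Q^2}$, and then to deduce (2) from (1) together with Theorem~\ref{theorem - simply con implies trans}. First I would record the product cell structure. Since $Q^1$ and $Q^2$ are trees, $Q^1_2=Q^2_2=\emptyset$, so $X_{Q^1}$ and $X_{Q^2}$ are one-dimensional complexes whose cells are exactly the vertices and arrows. The product $X_{Q^1}\times X_{Q^2}$ then carries the standard product cell structure: its $0$-cells are indexed by $Q^1_0\times Q^2_0$, its $1$-cells by $(Q^1_0\times Q^2_1)\sqcup(Q^1_1\times Q^2_0)$, and its $2$-cells are squares indexed by $Q^1_1\times Q^2_1$. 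For a pair $(\alpha,\beta)$ the corresponding square has corners $(s(\alpha),s(\beta))$, $(t(\alpha),s(\beta))$, $(s(\alpha),t(\beta))$, $(t(\alpha),t(\beta))$, with boundary the four $1$-cells $(\alpha,s(\beta))$, $(\alpha,t(\beta))$, $(s(\alpha),\beta)$, $(t(\alpha),\beta)$.

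The heart of the argument is to subdivide each such square by inserting the diagonal from $(t(\alpha),t(\beta))$ to $(s(\alpha),s(\beta))$; since $\alpha^*\colon t(\alpha)\to s(\alpha)$ and $\beta^*\colon t(\beta)\to s(\beta)$, this diagonal is precisely the arrow $(\alpha^*,\beta^*)$ of $Q^1\tilde{\otimes}Q^2$. I would check directly that the two triangular regions so obtained are bounded by the cycles $(\alpha^*,\beta^*)(\alpha,t(\beta))(s(\alpha),\beta)$ and $(\alpha^*,\beta^*)(t(\alpha),\beta)(\alpha,s(\beta))$, which are exactly the two elements of $(Q^1\tilde{\otimes}Q^2)_2$ associated to $(\alpha,\beta)$. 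Because each diagonal lies in the interior of its square, these local subdivisions glue consistently over all of $X_{Q^1}\times X_{Q^2}$ and produce a CW complex with the same underlying space whose cells are exactly the $0$-, $1$- and $2$-cells of $X_{Q^1\tilde{\otimes}Q^2}$. As subdividing a square into two triangles along a diagonal is a homeomorphism fixing the boundary, the identity on the underlying space furnishes the desired homeomorphism $X_{Q^1\tilde{\otimes}Q^2}\cong X_{Q^1}\times X_{Q^2}$.

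For (2) I would argue that a tree is contractible, so $X_{Q^1}$ and $X_{Q^2}$ are contractible, and hence so is their product. By (1), $X_{Q^1\tilde{\otimes}Q^2}$ is contractible, in particular simply connected, so $Q^1\tilde{\otimes}Q^2$ is a simply connected quiver with cycles. By Lemma~\ref{lemma - Q^1 x Q^2 covered enough cuts} it is also covered and has enough cuts, and Theorem~\ref{theorem - simply con implies trans} then yields that the set of all cuts is transitive under successive cut-mutations.

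I expect the main obstacle to be the bookkeeping in the subdivision step of (1): one must verify not only that the two prescribed triangles tile a single product square along the correct diagonal, but that the resulting local subdivisions assemble into a genuine global CW structure matching $X_{Q^1\tilde{\otimes}Q^2}$, with all attaching maps agreeing on the shared $1$-cells of adjacent squares. Once this is in place, part (2) is a short topological deduction.
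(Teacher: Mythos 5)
Your proposal is correct and follows essentially the same route as the paper: part (1) is proved by identifying $X_{Q^1\tilde{\otimes}Q^2}$ with the product CW structure on $X_{Q^1}\times X_{Q^2}$ after cutting each product square into the two triangles bounded by the cycles through the diagonal arrow $(\alpha^*,\beta^*)$, and part (2) follows by combining simple connectivity with Lemma~\ref{lemma - Q^1 x Q^2 covered enough cuts} and Theorem~\ref{theorem - simply con implies trans}. The only cosmetic differences are that you phrase (1) as a subdivision rather than an explicit map, and in (2) you invoke contractibility of trees instead of the product formula for $\pi_1$; both variants are equally valid.
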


\begin{proof}$\space$
	\begin{enumerate}
		\item We can view $Q^1$ and $Q^2$ as quivers with cycles by setting $Q^1_2 = Q^2_2 = \emptyset$. We will sketch the homeomorphism $\phi: X_{Q^1}\times X_{Q^2} \to X_{Q^1\tilde{\otimes}Q^2}$. By \cite[Chapter 0]{Hatcher2002algebraictopology} the CW complex of $X_{Q^1}\times X_{Q^2}$ is homeomorphic to the CW complex which
		\begin{enumerate}
			\item $0$-cells are the vertices $Q^1_0\times Q^2_0$,
			\item $1$-cells are indexed by the arrows $\{(\alpha, i) \mid i\in Q^1_0, \alpha\in Q^1_1\} \cup \{(j, \beta) \mid j\in Q^2_0, \beta\in Q^2_1\}$ attached at their endpoints,
			\item $2$-cells are indexed by $(\alpha, \beta)\in Q_1\times Q_2$ attached along the following diagrams.
			\begin{equation}\label{eq - 2 cell diagram}
				\begin{tikzcd}
					(s(\alpha){,} s(\beta)) \arrow[r, "(\alpha{,} s(\beta))"] \arrow[d, "(s(\alpha){,} \beta)"] & (t(\alpha){,} s(\beta)) \arrow[d, "(t(\alpha){,} \beta)"] \\
					(s(\alpha){,} t(\beta)) \arrow[r, "(\alpha{,} t(\beta))"] & (t(\alpha{,} t(\beta)))
				\end{tikzcd}
			\end{equation}
		\end{enumerate}
		with the natural attaching maps. Now let $\phi$ be the identity on the $0$-cells and $\phi$ be the inclusion on the $1$-cells. Each $2$-cell \eqref{eq - 2 cell diagram} is mapped by $\phi$ to the gluing of the two cells
		\begin{equation*}
			\begin{tikzcd}
				(s(\alpha){,} s(\beta)) \arrow[r, "(\alpha{,} s(\beta))"] \arrow[d, "(s(\alpha){,} \beta)"] & (t(\alpha){,} s(\beta)) \arrow[d, "(t(\alpha){,} \beta)"] \\
				(s(\alpha){,} t(\beta)) \arrow[r, "(\alpha{,} t(\beta))"] & (t(\alpha{,} t(\beta))) \arrow[lu, "(\alpha^*{,} \beta^*)", swap]
			\end{tikzcd}
		\end{equation*}
		in $X_{Q^1\tilde{\otimes}Q^2}$. It is straightforward to show that $\phi$ is indeed a continuous map with a continuous inverse.
		\item Since $Q^1$ and $Q^2$ are connected $X_{Q^1}$ and $X_{Q^2}$ are path connected. Thus applying \cite[Proposition 1.12]{Hatcher2002algebraictopology} we have that
		\begin{equation*}
			\pi_1(X_{Q^1}\times X_{Q^2}) \cong \pi_1(X_{Q^1})\times \pi_1(X_{Q^2}).
		\end{equation*}
		Now since $Q^1$ and $Q^2$ are trees $X_{Q^1}$ and $X_{Q^2}$ are simply connected. Hence (1) implies that $Q^1\tilde{\otimes}Q^2$ is simply connected.

		Together with Lemma~\ref{lemma - Q^1 x Q^2 covered enough cuts} we can apply Theorem~\ref{theorem - simply con implies trans} to get the result.
	\end{enumerate}
\end{proof}

\subsection{Tensor Products of Species}
In this section we recall results of \cite{soderberg2024mutation}. First we recall the description of the $3$-preprojective algebra of tensor products of species using species with potential. Then we recall that every species with potential is Morita equivalent to a $\K$-species with potential, i.e. their Jacobian algebras are Morita equivalent. Finally we show that, under certain conditions, the set of all cuts for said Morita equivalent $\K$-species with potential is transitive under successive cut-mutation.

Following \cite{soderberg2024mutation} we introduce the following notation. For every $\alpha\in Q_1$ we define $\underline{\alpha}\subseteq M_\alpha$ as sets of elements such that
\begin{equation*}
	c_\alpha = \sum_{a\in \underline{\alpha}} a \otimes a^* \in M_\alpha \otimes_{D_{s(\alpha)}} M_\alpha^*
\end{equation*}
is the Casimir element of $M_\alpha\otimes_{D_{s(\alpha)}}M_\alpha^*$ associated to $\b_S$, i.e.
\begin{equation*}
	\sum_{a\in \underline{\alpha}} a \otimes \b(a^*\otimes -)\in M_\alpha \otimes_{D_{s(\alpha)}} \mathrm{Hom}_{D_{s(\alpha)}^{op}}(M, D_{s(\alpha)})
\end{equation*}
is the Casimir element of $M_\alpha\otimes_{D_{s(\alpha)}} \mathrm{Hom}_{D_{s(\alpha)}^{op}}(M, D_{s(\alpha)})$. Similarly, we define $\overline{\alpha}\subseteq M_\alpha$ such that
\begin{equation*}
	c_{\alpha^*} = \sum_{a'\in \overline{\alpha}} {a'}^* \otimes a' \in M_\alpha^* \otimes_{D_{t(\alpha)}} M_\alpha
\end{equation*}
is the Casimir element of $M_\alpha^*\otimes_{D_{s(\alpha)}} M_\alpha$ associated to $\b_S$, i.e.
\begin{equation*}
	\sum_{a'\in \overline{\alpha}} \b(-\otimes {a'}^*)\otimes a'\in \mathrm{Hom}_{D_{t(\alpha)}}(M, D_{t(\alpha)}) \otimes_{D_{t(\alpha)}}  M_\alpha
\end{equation*}
is the Casimir element of $\mathrm{Hom}_{D_{t(\alpha)}}(M, D_{t(\alpha)}) \otimes_{D_{t(\alpha)}}  M_\alpha$. We also introduce the notation that $\overline{\alpha^*} = \{a^* \mid a\in \underline{\alpha} \}$ and $\underline{\alpha^*} = \{a^* \mid a\in \overline{\alpha} \}$.

\begin{mydef}\label{definition - S(S^1, S^2)}\cite[Definition 3.30]{soderberg2024mutation}
	Let $S^i$ be an acyclic species for each $i\in \{1, 2\}$. 
	\begin{enumerate}
		\item Define $S(S^1, S^2) = (D, M)$ as the species over $Q^1\tilde{\otimes}Q^2$ given by
		\begin{equation*}
			\begin{aligned}
				D &= D^1\otimes_\K D^2, \\
				M &= M^1\otimes_\K D^2\oplus D^1\otimes_\K M^2\oplus {M^1}^*\otimes_\K {M^2}^*.
			\end{aligned}
		\end{equation*}
		Using $\t_1$ and $\t_2$ we identify
		\begin{equation*}
			M^* = {M^1}^*\otimes_\K D^2 \oplus D^1\otimes_\K {M^2}^* \oplus M^1\otimes_\K M^2.
		\end{equation*}
		We define
		\begin{equation*}
			\b_{S(S^1, S^2)}: M\otimes_{D^1\otimes_\K D^2} M^* \oplus M^*\otimes_{D^1\otimes_\K D^2} M \to D^1\otimes_\K D^2,
		\end{equation*}
		via the morphism
		\begin{equation*}
			\t := \t_1\otimes \t_2: D = D^1\otimes_\K D^2 \to \K.
		\end{equation*}
		In other words, $\b_{S(S^1, S^2)}$ is defined such that
		\begin{equation*}
			\b_{S(S^1, S^2)}((a\otimes b)\otimes (c\otimes d)) = \begin{cases}
				\b_{S^1}(a\otimes c)\otimes bd, & \text{if } a\in M^1, b\in D^2, c\in {M^1}^*, d\in D^2 \\
				\b_{S^1}(a\otimes c)\otimes bd, & \text{if } a\in {M^1}^*, b\in  D^2, c\in M^1, d\in D^2 \\
				ac\otimes \b_{S^2}(b\otimes d), & \text{if } a\in D^1, b\in M^2, c\in D^1, d\in {M^2}^* \\
				ac\otimes \b_{S^2}(b\otimes d), & \text{if } a\in D^1, b\in {M^2}^*, c\in D^1, d\in M^2 \\
				\b_{S^1}(a\otimes c)\otimes \b_{S^2}(b\otimes d), & \text{if } a\in M^1, b\in M^2, c\in {M^1}^*, d\in {M^2}^* \\
				\b_{S^1}(a\otimes c)\otimes \b_{S^2}(b\otimes d), & \text{if } a\in {M^1}^*, b\in {M^2}^*, c\in M^1, d\in M^2, \\
				0, & \text{otherwise.}
			\end{cases}
		\end{equation*}
		\item The potential $W(S^1, S^2)\in T(S(S^1, S^2))$ is defined as
		\begin{equation}\label{eq - potential for tensor species}
			\begin{aligned}
				W(S^1, S^2) =& \sum_{\substack{(\alpha, \beta)\in Q_1^1\times Q_1^2 \\ (a, b')\in \underline{\alpha}\times \overline{\beta}}} (a\otimes e_{s(\beta)})\otimes(a^*\otimes {b'}^*)\otimes(e_{t(\alpha)}\otimes b') + \\
				&-\sum_{\substack{(\alpha, \beta)\in Q_1^1\times Q_1^2 \\ (a', b)\in \overline{\alpha}\times \underline{\beta}}} (e_{s(\alpha)}\otimes b)\otimes({a'}^*\otimes b^*)\otimes(a'\otimes e_{t(\beta)}) = \\
				=& W_1 - W_2.
			\end{aligned}
		\end{equation}
		It is indeed a potential, i.e. $W(S^1, S^2)\in \mathcal{Z}_{D^1\otimes_\K D^2}(T(S^1, S^2))$, since $c_\alpha\in \mathcal{Z}_{D^1}(T(\overline{S^1}))$ and $c_\beta\in \mathcal{Z}_{D^2}(T(\overline{S^2}))$ for any arrows $\alpha\in Q_1^1$ and $\beta\in Q_1^2$ which is a consequence of \eqref{eq - casimir elements equations ass. to b}.
	\end{enumerate}
\end{mydef}

\begin{myprop}\label{proposition - preprojective algebra of tensor species}\cite[Proposition 3.33]{soderberg2024mutation}
	Let $S^i$ be acyclic $\K$-species for $i\in \{1, 2\}$. Let $\Lambda = T(S^1)\otimes_\K T(S^2)$. Then $\gldim(\Lambda) = 2$ and $\widehat{\Pi}_3(\Lambda) \cong \mathcal{P}(S(S^1, S^2), W(S^1, S^2))$.
\end{myprop}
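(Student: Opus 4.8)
The plan is to handle the two assertions in turn, first producing an explicit $\Lambda$-bimodule resolution that pins down $\gldim\Lambda$, and then feeding that resolution into the tensor-algebra description of the higher preprojective algebra in order to match it with the Jacobian algebra. First I would observe that $\Lambda = T(S^1)\otimes_\K T(S^2)$ is again a tensor algebra over the semisimple base $D=D^1\otimes_\K D^2$, namely $T_D\!\left(M^1\otimes_\K D^2\oplus D^1\otimes_\K M^2\right)$ subject only to the commutativity relations identifying $(m^1\otimes 1)(1\otimes m^2)$ with $(1\otimes m^2)(m^1\otimes 1)$. The key point, and the only place perfectness of $\K$ is genuinely used, is that each division algebra $D^i$ is then separable over $\K$, so $D$ is separable and in particular semisimple; this makes $\Lambda\otimes_D\Lambda$ a projective bimodule and lets us work relative to $D$. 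Each factor $A_i=T_{D^i}(M^i)$ is hereditary with the standard length-one bimodule resolution $0\to A_i\otimes_{D^i}M^i\otimes_{D^i}A_i\to A_i\otimes_{D^i}A_i\to A_i\to 0$, and forming the (completed) $\K$-tensor product of these two resolutions yields a $\Lambda$-bimodule resolution of length $2$,
\[
0\to \Lambda\otimes_D (M^1\otimes_\K M^2)\otimes_D\Lambda \to \Lambda\otimes_D N\otimes_D\Lambda\to \Lambda\otimes_D\Lambda\to\Lambda\to 0,
\]
with $N=M^1\otimes_\K D^2\oplus D^1\otimes_\K M^2$ recording the horizontal and vertical arrows and the leftmost term recording a single ``diagonal'' bimodule. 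This gives $\gldim\Lambda\le 2$, and equality follows since the nonzero leftmost term produces a nonvanishing $\mathrm{Ext}^2$ between suitable simple modules.

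Next I would use the description of the complete higher preprojective algebra as a completed tensor algebra over $\Lambda$: since $\gldim\Lambda=2$, one has $\widehat{\Pi}_3(\Lambda)\cong\widehat{T}_\Lambda(E)$ with degree-one part $E=\mathrm{Ext}^2_\Lambda(\DD\Lambda,\Lambda)\cong\tau_2^{-1}\Lambda$, the degree-$i$ component being the $i$-fold $\Lambda$-tensor power of $E$. The Tor-vanishing needed to make this tensor-algebra identification exact I would read off directly from the Koszul-type resolution above (so that $2$-representation finiteness need not be assumed). I would then compute $E$ by dualizing that resolution and taking top cohomology: only the leftmost term contributes, and using that $D$ is symmetric together with the self-duality encoded by $\b_{S^i}$ (the bijections $\b_{1,l},\b_{2,r}$ and the symmetrizing trace $\t$) identifies the result as $\Lambda\otimes_D\left(M^{1*}\otimes_\K M^{2*}\right)\otimes_D\Lambda$. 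Thus the only new degree-one generators are the diagonal arrows $M^{1*}\otimes_\K M^{2*}$, exactly the third summand of the arrow bimodule $M$ in Definition~\ref{definition - S(S^1, S^2)}.

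Combining the two steps, $\widehat{\Pi}_3(\Lambda)\cong\widehat{T}_\Lambda(E)$ unfolds as the complete tensor algebra $\widehat{T}_D(M)$ on $M=M^1\otimes_\K D^2\oplus D^1\otimes_\K M^2\oplus M^{1*}\otimes_\K M^{2*}$, which is precisely $\widehat{T}(S(S^1,S^2))$, modulo the relations inherited from $\Lambda$ together with the relations governing how the diagonal generators compose with the horizontal and vertical ones. The crux — and the step I expect to be the main obstacle — is to check that this relation ideal coincides with the Jacobian ideal $\mathcal{J}(S,W)=\overline{\langle\partial_\xi W\mid \xi\in M^*\rangle}$ for $W=W_1-W_2$ as in \eqref{eq - potential for tensor species}. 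I would verify this by running $\xi$ through the three families dual to the three summands of $M$: the diagonal derivatives $\xi\in M^1\otimes_\K M^2$ reproduce the commutativity relations of $\Lambda$, while the horizontal and vertical derivatives produce exactly the relations encoding the bimodule structure of $E$, i.e.\ the composites of a diagonal arrow with a horizontal or vertical arrow that land back in $\Lambda$. The Casimir elements $c_\alpha,c_\beta$ and the identities \eqref{eq - casimir elements equations ass. to b} are precisely what make the cyclic derivatives $\partial_\xi W$ agree with the structure constants of the multiplication in $\Pi_3$, so the bookkeeping of these two descriptions coincides summand by summand.

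Finally I would confirm that the resulting comparison map is an isomorphism of complete graded algebras which restricts to the identity on the degree-$0$ part $\Lambda$; graded surjectivity is immediate from the generator matching, and injectivity follows once the relations are shown to agree, since both sides then have the same graded pieces. This completes the identification $\widehat{\Pi}_3(\Lambda)\cong\mathcal{P}(S(S^1,S^2),W(S^1,S^2))$. I anticipate that the homological half (building the resolution and extracting $E$) is essentially formal given separability of $D$, whereas the genuine work lies in the explicit, Casimir-element--driven verification that the multiplicative relations of $\widehat{T}_\Lambda(E)$ are the Jacobian relations of $W_1-W_2$.
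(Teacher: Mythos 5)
The first thing to note is that this paper contains no proof of this proposition at all: it is imported verbatim from \cite[Proposition 3.33]{soderberg2024mutation}, so there is no internal argument to compare yours against. Judged on its own terms, your outline follows what is essentially the standard route for results of this type (and very plausibly the route of the cited paper): separability of $D = D^1\otimes_\K D^2$ from perfectness of $\K$; the length-two projective bimodule resolution of $\Lambda$ obtained by tensoring the two hereditary resolutions over $\K$, giving $\gldim\Lambda = 2$; the identification $\widehat{\Pi}_3(\Lambda)\cong \widehat{T}_\Lambda(E)$ with $E = \mathrm{Ext}^2_\Lambda(\DD\Lambda,\Lambda)$, which indeed needs nothing beyond $\gldim\Lambda\le 2$; and the computation of $E$ from the dualized resolution using that $D$ is symmetric. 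All of these steps are sound, and you correctly locate the only place where perfectness of $\K$ enters.

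Two caveats. First, a genuine imprecision in your second paragraph: $E$ is \emph{not} $\Lambda\otimes_D\left(M^{1*}\otimes_\K M^{2*}\right)\otimes_D\Lambda$; it is the cokernel of the dualized differential $\Lambda\otimes_D N^*\otimes_D\Lambda \to \Lambda\otimes_D\left(M^{1*}\otimes_\K M^{2*}\right)\otimes_D\Lambda$ (with $N$ your horizontal-plus-vertical bimodule), hence it is merely \emph{generated} by the diagonal arrows. Your third paragraph implicitly repairs this by listing ``the relations governing how the diagonal generators compose with the horizontal and vertical ones'' among the defining relations of $\widehat{T}_\Lambda(E)$, but as written the two paragraphs contradict one another, and if $E$ really were that free bimodule the conclusion would be false. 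Second, the step you yourself call the crux --- verifying via the Casimir identities \eqref{eq - casimir elements equations ass. to b} that the commutativity relations of $\Lambda$ match the derivatives $\partial_\xi W$ for $\xi\in M^1\otimes_\K M^2$, and that the image of the dualized differential matches the derivatives by the duals of the horizontal and vertical arrows, including the sign in $W_1-W_2$ and the closure/completion bookkeeping --- is described but not carried out, and that computation is where essentially all of the content of the cited proposition lives. So: correct architecture, with the load-bearing verification left as a sketch rather than a proof.
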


\begin{myrem}
	It is important to note that $S(S^1, S^2)$ is not a $\K$-species in general. This is due to the fact that if $D_1$ and $D_2$ are two division $\K$-algebras, we do not necessarily have that $D_1\otimes_\K D_2$ is a division $\K$-algebra (see \cite[Remark~3.31]{soderberg2024mutation} or \cite[Remark~11.6]{soderberg2022preprojective}). It may even happen that $\Lambda = T(S^1)\otimes_\K T(S^2)$ and $\widehat{\Pi}_3(\Lambda)$ are not basic. This will be problematic when we consider $2$-APR tilting in Section~\ref{Section - Preproj, 2apr and cutmutation} since $2$-APR tilting assumes that the algebra is basic.
\end{myrem}

Recall the following result from \cite{soderberg2024mutation}.

\begin{myprop}\cite[Proposition~3.32]{soderberg2024mutation}\label{proposition - product is division alg}
	Let $S^1$ and $S^2$ be two species. If $D^1_i\otimes_\K D^2_j$ is a division $\K$-algebra for all $i\in Q^1_0$ and $j\in Q^2_0$, then $S(S^1, S^2)$ is a $\K$-species. In particular, if $D^1 = \K^{|Q^1_0|}$ and $D^2 = \K^{|Q^2_0|}$, i.e. in the case when $T(S^1)$ and $T(S^2)$ are given as path algebras over $\K$, then $S(S^1, S^2)$ is a $\K$-species.
\end{myprop}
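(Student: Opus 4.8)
The plan is to read this off directly against the definition of a $\K$-species, Definition~\ref{Definition - Species}(3). By Definition~\ref{definition - S(S^1, S^2)} the pair $S(S^1, S^2) = (D, M)$ is already established to be a species over $Q^1\tilde{\otimes}Q^2$; in particular the semisimplicity of the vertex algebras, the bimodule structure of $M$, the dualising condition, and the $\K$-centrality conditions are all inherited from that construction. So the only thing left is to upgrade ``species'' to ``$\K$-species'', which by Definition~\ref{Definition - Species}(3) asks for exactly two further properties: finite-dimensionality, and that the semisimple algebra sitting at each vertex be a \emph{division} $\K$-algebra.

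First I would identify the vertex algebras explicitly. Since $D = D^1 \otimes_\K D^2$ with $D^1 = \bigoplus_{i\in Q^1_0} D^1_i$ and $D^2 = \bigoplus_{j\in Q^2_0} D^2_j$, distributing the tensor product gives
\[
	D = \bigoplus_{(i,j)\in Q^1_0\times Q^2_0} D^1_i \otimes_\K D^2_j,
\]
and this decomposition matches the vertex set $(Q^1\tilde{\otimes}Q^2)_0 = Q^1_0\times Q^2_0$. Hence the algebra attached to the vertex $(i,j)$ is precisely $D^1_i \otimes_\K D^2_j$. By hypothesis each such factor is a division $\K$-algebra, and it is finite-dimensional as a $\K$-tensor product of the finite-dimensional spaces $D^1_i$ and $D^2_j$; finite-dimensionality of $M$ follows in the same way from its description in Definition~\ref{definition - S(S^1, S^2)}. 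This checks both remaining conditions, so $S(S^1, S^2)$ is a $\K$-species. For the ``in particular'' statement I would simply specialise: when $D^1 = \K^{|Q^1_0|}$ and $D^2 = \K^{|Q^2_0|}$ one has $D^1_i \cong \K$ and $D^2_j \cong \K$, so $D^1_i \otimes_\K D^2_j \cong \K\otimes_\K \K \cong \K$, which is trivially a division $\K$-algebra, and the hypothesis of the first part holds automatically.

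I do not expect a genuine obstacle here, since all the structural work is already packaged into Definition~\ref{definition - S(S^1, S^2)}; the only new input is the division-algebra condition, which is exactly the stated hypothesis. The one point worth flagging is that the hypothesis is self-reinforcing rather than vacuous: each $D^1_i$ embeds into $D^1_i \otimes_\K D^2_j$ via $d\mapsto d\otimes 1$, so a division tensor product forces $D^1_i$ (and symmetrically $D^2_j$) to have no zero divisors and hence, being finite-dimensional over $\K$, to be a division algebra itself. Thus $S^1$ and $S^2$ are themselves $\K$-species and there is no hidden compatibility gap. Perfectness of $\K$ guarantees that these tensor products of division algebras are separable and well-behaved, but in this statement that is subsumed by the stronger division-algebra assumption.
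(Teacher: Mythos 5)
Your proof is correct and is exactly the expected argument: the paper itself gives no inline proof (it cites \cite[Proposition~3.32]{soderberg2024mutation}), and the content of that proposition is precisely the direct verification you perform, namely that the vertex algebras of $S(S^1,S^2)$ are the summands $D^1_i\otimes_\K D^2_j$ of $D^1\otimes_\K D^2$, so the division hypothesis upgrades the species of Definition~\ref{definition - S(S^1, S^2)} to a $\K$-species, with the path-algebra case following from $\K\otimes_\K\K\cong\K$. The only point to note is that your finite-dimensionality check tacitly assumes $S^1$ and $S^2$ are finite dimensional species (as Definition~\ref{Definition - Species}(3) requires of a $\K$-species); this hypothesis is implicit in the statement and in all of the paper's applications, so it is a harmless reading rather than a gap.
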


\begin{mycor}
	In addition to the assumptions in Proposition~\ref{proposition - product is division alg} assume that the quivers $Q^1$ and $Q^2$ of $S^1$ and $S^2$ respectively are trees. Then the set of all cuts of $(S(S^1, S^2), W(S^1, S^2))$ is transitive under successive cut-mutation.
\end{mycor}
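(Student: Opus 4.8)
The plan is to combine the previously established results in a direct way. The final corollary asks us to show that the set of all cuts of the species with potential $(S(S^1, S^2), W(S^1, S^2))$ is transitive under successive cut-mutation, under the hypotheses that $D^1_i \otimes_\K D^2_j$ is a division $\K$-algebra for all vertices and that the underlying quivers $Q^1$ and $Q^2$ are trees.

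First I would observe that since cut-mutation and the transitivity statement are defined purely at the level of the associated quiver with cycles, the statement reduces to a claim about $Q^1 \tilde{\otimes} Q^2$. Indeed, the quiver with cycles associated to $(S(S^1, S^2), W(S^1, S^2))$ is precisely $Q^1 \tilde{\otimes} Q^2$, since the support of the potential $W(S^1, S^2)$ in \eqref{eq - potential for tensor species} consists exactly of the two families of $3$-cycles defining $(Q^1\tilde{\otimes}Q^2)_2$. The hypothesis that $D^1_i \otimes_\K D^2_j$ is a division algebra is used via Proposition~\ref{proposition - product is division alg} to guarantee that $S(S^1, S^2)$ is genuinely a $\K$-species, so that $(S(S^1, S^2), W(S^1, S^2))$ is a bona fide species with potential whose cuts are the cuts of its associated quiver with cycles.

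The main content then follows immediately from Lemma~\ref{lemma - finite tress product trans}(2): when $Q^1$ and $Q^2$ are trees, the set of all cuts on $Q^1 \tilde{\otimes} Q^2$ is transitive under successive cut-mutations. So the second step is simply to invoke that lemma. For completeness I would spell out why its hypotheses are met: $Q^1$ and $Q^2$ are trees by assumption, and Lemma~\ref{lemma - finite tress product trans} applies directly to trees. Tracing back, that lemma rests on Lemma~\ref{lemma - Q^1 x Q^2 covered enough cuts} (covered and enough cuts), the homeomorphism $X_{Q^1\tilde{\otimes}Q^2}\cong X_{Q^1}\times X_{Q^2}$ giving simple connectedness, and Theorem~\ref{theorem - simply con implies trans}.

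In truth there is no genuine obstacle here, as the corollary is essentially a repackaging: the additional division-algebra hypothesis of Proposition~\ref{proposition - product is division alg} plays no role in the transitivity statement itself, which is purely combinatorial and governed by $Q^1\tilde{\otimes}Q^2$. The only point requiring a moment's care is the identification of the quiver with cycles of $(S(S^1, S^2), W(S^1, S^2))$ with $Q^1\tilde{\otimes}Q^2$, so that ``cuts of the species with potential'' coincides with ``cuts of the quiver with cycles,'' and hence that Lemma~\ref{lemma - finite tress product trans} is directly applicable. Once that identification is made explicit, the proof is a one-line application of Lemma~\ref{lemma - finite tress product trans}(2).
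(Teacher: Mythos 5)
Your proposal is correct and matches the paper's intended argument exactly: the paper states this corollary as an immediate consequence of Lemma~\ref{lemma - finite tress product trans}(2), with Proposition~\ref{proposition - product is division alg} serving only to ensure that $S(S^1,S^2)$ is a genuine $\K$-species, and with the identification of the quiver with cycles of $(S(S^1,S^2), W(S^1,S^2))$ with $Q^1\tilde{\otimes}Q^2$ via the support of the potential. Your added care in checking that the support of $W(S^1,S^2)$ realises precisely the cycles of $(Q^1\tilde{\otimes}Q^2)_2$ is exactly the one point the paper leaves implicit.
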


In general $D^1_i\otimes_\K D^2_j$ is not necessarily a division $\K$-algebra. Thus we may have to alter quiver of the species $S(S^1, S^2)$ to obtain a $\K$-species. For this we need the following result.

\begin{myprop}\label{proposition - morita equivalent k-species}\cite[Proposition 3.24]{soderberg2024mutation}
	Let $(S, W)$ be a finite dimensional species with potential. There is a Morita equivalent $\K$-species with potential $(S', W')$. In other words, $T(S)\mathrm{-mod}\cong T(S')\mathrm{-mod}$, $\widehat{T}(S)\mathrm{-mod}\cong \widehat{T}(S')\mathrm{-mod}$ and $\mathcal{P}(S, W)\mathrm{-mod}\cong \mathcal{P}(S', W')\mathrm{-mod}$. 
\end{myprop}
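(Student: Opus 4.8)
The plan is to realise $S'$ as the \emph{basic reduction} of $S$, obtained by splitting each vertex into the simple factors of its semisimple algebra, and to package all three claimed equivalences as a single Morita equivalence induced by a full idempotent sitting in degree zero. First I would decompose each $D_i$: since $\dim_\K D_i < \infty$ and $\K$ is perfect, Artin--Wedderburn gives $D_i \cong \prod_{s=1}^{r_i} \Mat_{m_{is}}(E_{is})$ with each $E_{is}$ a finite dimensional division $\K$-algebra. Choosing a primitive idempotent $\epsilon_{is}$ (a rank-one matrix idempotent) in each simple factor, set $e = \sum_{i}\sum_{s}\epsilon_{is} \in D$. Because $D = \bigoplus_i D_i$ is semisimple and $e$ meets every simple factor, $e$ is a full idempotent, $DeD = D$. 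As $D$ is the degree-zero part of both $T(S)$ and $\widehat{T}(S)$ and embeds into $\mathcal{P}(S,W)$ (the ideal $\mathcal{J}(S,W)$ being concentrated in positive degree), the image of $e$ is full in each of these rings. Hence multiplication by $e$ yields Morita equivalences $T(S)\mathrm{-mod} \cong (e\,T(S)\,e)\mathrm{-mod}$, $\widehat{T}(S)\mathrm{-mod} \cong (e\,\widehat{T}(S)\,e)\mathrm{-mod}$ and $\mathcal{P}(S,W)\mathrm{-mod} \cong (e\,\mathcal{P}(S,W)\,e)\mathrm{-mod}$.

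Next I would identify these corner rings with tensor and Jacobian algebras of a new species. Put $D' = eDe = \prod_{i,s}\epsilon_{is}D_i\epsilon_{is} \cong \prod_{i,s} E_{is}$, a product of division $\K$-algebras, and take $Q'_0 = \{(i,s)\}$ as the split vertex set. For $\alpha : i \to j$ in $Q_1$ set $M'_{(\alpha,s,t)} = \epsilon_{jt} M_\alpha \epsilon_{is}$, an $E_{jt}$-$E_{is}$-bimodule; the nonzero ones index the arrows of a quiver $Q'$. Since $Q$ has no multiple arrows there is at most one such bimodule per ordered pair of split vertices, so $Q'$ has none either. Because $e$ lies in degree zero and is full, truncation commutes with the tensor algebra construction, giving $e\,T(S)\,e \cong T(D', eMe) = T(S')$ and likewise $e\,\widehat{T}(S)\,e \cong \widehat{T}(S')$. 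Moreover $D'$ is again a finite dimensional semisimple $\K$-algebra, hence symmetric by \cite[Corollary 5.17]{skowronski2011frobenius}, so the dualising condition for $S'$ holds automatically as recorded after Definition~\ref{Definition - Species}; thus $S'$ is a genuine $\K$-species.

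Finally I would transport the potential and match the relations. Since $W \in \mathcal{Z}_D(\widehat{T}(S)_{\ge 2})$ commutes with all of $D$, it commutes with $e$, so $W' := eWe = eW = We$ again lies in $\mathcal{Z}_{D'}(\widehat{T}(S')_{\ge 2})$ and is a potential for $S'$. The crux is to show $e\,\mathcal{J}(S,W)\,e = \mathcal{J}(S',W')$, which then gives $e\,\mathcal{P}(S,W)\,e \cong \mathcal{P}(S',W')$ and, combined with the three Morita equivalences above, completes the proof. This is the step I expect to be the main obstacle: the Jacobian ideal is generated by the cyclic derivatives $\partial_\xi(W)$, which are built from the form $\b_S$ and the Casimir data of \eqref{eq - Casimir elements of M M^*}, so one must verify that the dual-basis data for $S'$ is exactly the $e$-truncation of that for $S$ and that cyclic differentiation is compatible with multiplication by $e$ on both sides. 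Granting this compatibility, all three module-category equivalences follow simultaneously from the single full idempotent $e$.
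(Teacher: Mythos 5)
Your overall strategy coincides with the paper's (and with the cited proof from \cite{soderberg2024mutation}): corner every ring in sight by an idempotent $e\in D$ chosen so that $eDe$ is a product of division $\K$-algebras, cf.\ Remark~\ref{remark - morita equiv given by a idemp e}. The paper's version of the construction (spelled out before Proposition~\ref{proposition - F-conn morita equiv}) does this in two steps, first splitting each $D_i$ into its simple matrix blocks, which gives an isomorphism $T(S)\cong T(S')$ onto a species with simple vertex algebras, and only then cornering by rank-one idempotents $\tilde{e}_{xk}$; you do both at once with primitive idempotents $\epsilon_{is}$. Up to that repackaging, your points are fine: $e$ is full, fullness passes to $T(S)$, $\widehat{T}(S)$ and $\mathcal{P}(S,W)$ because the Jacobian ideal sits in positive degree, the identification $e\,T(S)\,e\cong T(eDe,eMe)$ holds (and, as you note, genuinely uses fullness, via $De\otimes_{eDe}eD\cong D$), and $W'=eWe=eW$ remains central over $eDe$.

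The genuine gap is exactly the step you flag and then grant yourself: the equality $e\,\mathcal{J}(S,W)\,e=\mathcal{J}(S',W')$, without which you only get $e\,\mathcal{P}(S,W)\,e\cong\widehat{T}(S')/e\,\mathcal{J}(S,W)\,e$ and the third Morita equivalence is unproved. This is not a routine compatibility, because the Jacobian ideal is not presented in a way that visibly corners well: its generators $\partial_\xi(W)$ are built from the form $\b_S$, the Casimir elements \eqref{eq - Casimir elements of M M^*}, and the cyclic permutation operator $\varepsilon_c$, which is itself defined through the Casimir elements. Two things must actually be proved. First, that $e\,\mathcal{J}(S,W)\,e$ is the closed ideal of $\widehat{T}(S')$ generated by $\{e\,\partial_\xi(W)\,e\mid \xi\in M^*\}$; this needs fullness \emph{together with} the fact that $\{\partial_\xi(W)\mid\xi\in M^*\}$ is a $D$-subbimodule of $\widehat{T}(S)$ (since $\partial_{d\xi d'}(W)=d\,\partial_\xi(W)\,d'$), as cornering an ideal is in general not the ideal generated by the cornered generators. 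Second, that the $e$-truncation of the dual-basis data of $S$ is valid dual-basis data for $S'$ with respect to the restricted trace $\t|_{eDe}$, so that $\{e\,\partial_\xi(W)\,e\}$ coincides with the cyclic derivatives of $W'$ computed internally to $S'$. This second point is the analogue of Lemma~\ref{lemma - fixed k casimir}; note that the paper proves it there for \emph{central} block idempotents, and your $\epsilon_{is}$ (like the paper's $\tilde{e}_{xk}$) are not central, so the truncated Casimir element cannot be written down term by term as $\sum e x_i^* e\otimes e x_i e$: the middle tensor factors must be pushed through $e$ using $D=DeD$, and the defining equations \eqref{eq - casimir elements equations ass. to b} must then be re-verified for $\b_{S'}$. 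Since this identification of the Jacobian ideal is the actual mathematical content of \cite[Proposition 3.24]{soderberg2024mutation}, deferring it means the proposal, as written, proves the statements about $T(S)$ and $\widehat{T}(S)$ but not the one about $\mathcal{P}(S,W)$.
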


\begin{myrem}\label{remark - morita equiv given by a idemp e}
	The Morita equivalence is achieved by choosing a certain idempotent $e\in D$ such that $\widehat{T}(S') = e\widehat{T}(S)e$ and $\mathcal{P}(S', W') = e\mathcal{P}(S, W)e$. In fact $(S', W')$ is a species with potential over a quiver with cycles $Q'$. Moreover, by \cite[Remark 3.6]{soderberg2024mutation} there is a quiver morphism $\pi: Q' \to Q$. Following the construction in \cite[Lemma 3.4]{soderberg2024mutation} one can check that if $c\in Q_2'$, then $\pi(c)\in Q_2$. Thus if $C\subseteq Q_1$ is a cut for $(S, W)$ then $\pi^{-1}(C)\subseteq Q_1'$ is a cut for $(S', W')$ which we call the corresponding cut. Note however that $(S', W')$ may admit cuts that are not of this form.
\end{myrem}

\begin{myrem}
	Proposition~\ref{proposition - morita equivalent k-species} says that $S(S^1, S^2)$ can be replaced with a $\K$-species $S'$ whenever $S^1$ and $S^2$ are finite dimensional species. Although we still end up with a $\K$-species, it is unclear whether the set of all cuts of the new quiver with cycles $Q'$ for $S'$ is transitive under successive cut-mutations. For example, if $Q^1\tilde{\otimes}Q^2$ is simply connected $Q'$ is not simply connected in general. To deal with this issue we next restrict our attention to a certain type of species.
\end{myrem}

Let $F = \K$ and $G$ be a finite dimensional division $\K$-algebra.

\begin{myrem}
	The assumptions on $F$ and $G$ are more general than those in \cite[Section 6]{soderberg2024mutation}, where $G$ is assumed to be a Galois field extension of $F$. That assumption was used in order to explicitly compute examples. Here, we only require that $G$ is a finite dimensional division $\K$-algebra since we prove more implicit statements.
\end{myrem}

\begin{myprop}\label{proposition - F-conn morita equiv}
	For $k\in \{1, 2\}$, let $S^k$ be a species over a tree such that $D_i^k, M_\alpha^k\in \{\K, G\}$ for all $i\in Q_0^k$ and $\alpha\in Q_1^k$. Then the species with potential $(S(S^1, S^2), W(S^1, S^2))$ is Morita equivalent to a $\K$-species with potential $(\tilde{S}, \tilde{W})$, defined below, over a quiver with cycles $\tilde{Q}$ which is covered and has enough cuts. Moreover,
	\begin{enumerate}
		\item if there are $i\in Q_0^1$ and $j\in Q_0^2$ such that $D_i^1 = D_j^2 = \K$, then $\tilde{Q}$ is simply connected,
		\item if $D_i^1 = D_j^2 = G$ for all $i\in Q_0^1$ and $j\in Q_0^2$, then $\tilde{Q}$ is a disjoint union of simply connected components.
	\end{enumerate}
	In both cases the set of all cuts of $(\tilde{S}, \tilde{W})$ is transitive under successive cut-mutation.
\end{myprop}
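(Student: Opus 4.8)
The plan is to take $(\tilde S,\tilde W)$ to be the Morita-equivalent $\K$-species produced by Proposition~\ref{proposition - morita equivalent k-species}, and then to read off all four assertions from the combinatorics of the quiver morphism $\pi\colon\tilde Q\to Q^1\tilde\otimes Q^2$ of Remark~\ref{remark - morita equiv given by a idemp e}. Since $\K$ is perfect, $A:=G\otimes_\K G$ is semisimple; write $A\cong\prod_{s\in\mathcal S}A_s$ for its decomposition into simple factors with central primitive idempotents $\{e_s\}_{s\in\mathcal S}$. At a vertex $(i,j)$ of $Q^1\tilde\otimes Q^2$ the algebra is $D_i^1\otimes_\K D_j^2$, which is a division algebra (namely $\K$ or $G$) unless $D_i^1=D_j^2=G$; the idempotent realising the Morita equivalence is the identity at every such \emph{single} vertex and a sum of one primitive idempotent per factor $A_s$ at every \emph{double} vertex $D_i^1=D_j^2=G$. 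Thus $\tilde Q$ is obtained from $Q^1\tilde\otimes Q^2$ by leaving single vertices untouched and splitting each double vertex into $|\mathcal S|$ \emph{sheets} indexed by $s\in\mathcal S$, with $\pi$ recording the splitting.

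First I would dispose of \emph{covered} and \emph{enough cuts}. The three distinguished cuts $C_1,C_2,C_3$ of \eqref{eq - standard cuts} partition $(Q^1\tilde\otimes Q^2)_1$, and by Remark~\ref{remark - morita equiv given by a idemp e} each $\pi^{-1}(C_r)$ is a cut of $\tilde Q$; since $\pi^{-1}(C_1)\cup\pi^{-1}(C_2)\cup\pi^{-1}(C_3)=\tilde Q_1$, every arrow lies in a cut, so $\tilde Q$ has enough cuts. For coveredness I would combine Lemma~\ref{lemma - Q^1 x Q^2 covered enough cuts} with the fact, from the construction in Remark~\ref{remark - morita equiv given by a idemp e}, that every cycle of $Q^1\tilde\otimes Q^2$ lifts through any prescribed sheet to a cycle of $\tilde Q_2$; hence every arrow of $\tilde Q$ sits in a cycle.

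The heart of the proof is the topology of $X_{\tilde Q}$, and the key structural observation is a dichotomy for the arrows of $\tilde Q$ lying over a base arrow. For a base arrow between two double vertices the associated bimodule is the regular $A$-bimodule, so after applying the idempotents the component $e_sAe_t$ vanishes unless $s=t$; such arrows \emph{preserve the sheet label}. For a \emph{branch} arrow joining a single vertex to a double vertex the single side carries the identity while the double side splits, and $e_s$ times the bimodule is non-zero for every $s$, so the single vertex is joined to \emph{all} sheets. Consequently the label $s$ is globally well defined, and for each $s\in\mathcal S$ I would set $D_s\subseteq X_{\tilde Q}$ to be the subcomplex spanned by all single cells together with the sheet-$s$ cells over the double region. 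Because labels are preserved along double-to-double arrows and each branch arrow has a sheet-$s$ representative, $D_s$ contains exactly one vertex, arrow and $2$-cell over each cell of $Q^1\tilde\otimes Q^2$; thus $D_s\cong X_{Q^1\tilde\otimes Q^2}$, which by Lemma~\ref{lemma - finite tress product trans}(1) is homeomorphic to $X_{Q^1}\times X_{Q^2}$, hence contractible since $Q^1,Q^2$ are trees.

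It then remains to assemble the $D_s$. In case (2) every vertex is double, so there are no single cells and no branch arrows; the labels never merge and $X_{\tilde Q}=\coprod_{s\in\mathcal S}D_s$ is a disjoint union of contractible, hence simply connected, components. In case (1), fix $i_0,j_0$ with $D_{i_0}^1=D_{j_0}^2=\K$; then the single locus $Y=\bigcap_{s}D_s$ is connected, since any single vertex can be joined to $(i_0,j_0)$ by first moving to $(i,j_0)$ or $(i_0,j)$ inside the single row or column through it and then along the row $Q_0^1\times\{j_0\}$ to $(i_0,j_0)$. As $X_{\tilde Q}=\bigcup_{s}D_s$ with every pairwise (and higher) intersection equal to the connected set $Y$ and each $D_s$ simply connected, an inductive application of van Kampen's theorem (after the standard thickening of the closed subcomplexes to open neighbourhoods) gives $\pi_1(X_{\tilde Q})=0$, so $\tilde Q$ is simply connected. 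Transitivity of cut-mutation then follows from Theorem~\ref{theorem - simply con implies trans}: directly in case (1), and applied to each (simply connected, covered, enough-cuts) component $D_s$ in case (2), since cuts and cut-mutation act componentwise. \textbf{The main obstacle} I anticipate is the bookkeeping behind the sheet dichotomy and the isomorphism $D_s\cong X_{Q^1\tilde\otimes Q^2}$: one must track precisely how the idempotents $e_s$ split the three families of bimodules of $S(S^1,S^2)$ and verify that exactly one arrow and one $2$-cell survive over each base cell, which is where the explicit description of $(\tilde S,\tilde W)$ and the construction of \cite{soderberg2024mutation} are needed.
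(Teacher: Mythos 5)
Your proposal is correct, and for the heart of the statement it takes a genuinely different route from the paper. The parts where you agree: the paper also obtains \emph{covered} and \emph{enough cuts} from the preimages of the three standard cuts $C_1,C_2,C_3$ of \eqref{eq - standard cuts}, and in case (2) it likewise identifies $X_{\tilde Q}$ with a disjoint union of copies of $X_{Q^1\tilde\otimes Q^2}$ indexed by the simple factors of $G\otimes_\K G$. The difference is case (1): the paper argues by induction on $|Q_0^1|$, choosing a leaf arrow $\alpha$ of $Q^1$, decomposing an arbitrary cyclic walk into row and column pieces, and homotoping it off the leaf column $X_{\tilde Q(D^1_{t(\alpha)},S^2)}$ across adjacent $2$-cells; this forces a case analysis on $(D^1_{s(\alpha)},D^1_{t(\alpha)})\in\{\K,G\}^2$, with the case $G\to\K$ requiring an extra two-step argument conjugating by connecting walks and the projection $\pi:\tilde Q(G,S^2)\to\tilde Q(\K,S^2)$. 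Your sheet decomposition $X_{\tilde Q}=\bigcup_{s}D_s$, with each $D_s\cong X_{Q^1\tilde\otimes Q^2}$ contractible and every pairwise (and higher) intersection equal to the single locus $Y$, replaces that induction by one application of van Kampen, treats all of the paper's cases uniformly, and even subsumes case (2) as the degenerate situation $Y=\emptyset$; note that van Kampen only needs $Y$ path-connected (it need not be simply connected, e.g. it can be a cycle of single cells around a double vertex), and your row-and-column argument for connectedness of $Y$ is precisely where hypothesis (1) enters, consistent with the paper's Example~\ref{example - s tilde not simply connected} when it fails. What your route buys is conceptual clarity and no case analysis; what it costs is that the sheet dichotomy, the count of exactly one arrow and one $2$-cell of $D_s$ over each base cell, and the liftability of base cycles through a prescribed sheet must all be extracted from the explicit description of $\tilde Q_1$ and $\tilde Q_2$ --- the Casimir/idempotent bookkeeping (Lemmas~\ref{lemma - c_alpha x 1 casimir} and \ref{lemma - fixed k casimir} and the computation of $\tilde Q_2$) that the paper carries out before its proof. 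You correctly flag this as the main obstacle, and it is genuinely needed: both your coveredness claim and the isomorphism $D_s\cong X_{Q^1\tilde\otimes Q^2}$ rest on the fact that double-to-double bimodules are killed by $e_se_t$ for $s\neq t$ while branch bimodules meet every sheet. Finally, your componentwise application of Theorem~\ref{theorem - simply con implies trans} in case (2) makes explicit a point the paper leaves implicit, since the quivers there are tacitly assumed connected.
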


\begin{myrem}$\space$
	\begin{enumerate}
		\item Theorem~\ref{theorem - qwc fully, covered enough implies trans} applies to the quiver with cycles of $(\tilde{S}, \tilde{W})$ in Proposition~\ref{proposition - F-conn morita equiv}.
		\item Note that not all cases are covered in (1) and (2). Indeed, $(\tilde{S}, \tilde{W})$ may not be simply connected in the remaining cases. See Example~\ref{example - s tilde not simply connected}.
	\end{enumerate}
\end{myrem}

\begin{myrem}\label{remark - F = K}
	Note that Proposition~\ref{proposition - F-conn morita equiv} applies when $S^1$ and $S^2$ are representation finite species with $F = \K$.
\end{myrem}

Due to Remark~\ref{remark - F = K} we let $F = \K$ for the rest of the section. Before we prove Proposition~\ref{proposition - F-conn morita equiv}, let us first construct $\tilde{S}$ and $\tilde{W}$. We will do this in several steps. First we construct a species $S'$ and a potential $W'$ by following the proof of \cite[Lemma 3.4]{soderberg2024mutation} which will yield an isomorphic tensor algebra and Jacobian algebra. Then by following the construction in the proof of \cite[Lemma 3.24]{soderberg2024mutation} we get a Morita equivalent $\K$-species $\tilde{S}$ and a potential $\tilde{W}$ such that $T(S)\mathrm{-mod}\cong T(\tilde{S})\mathrm{-mod}$, $\widehat{T}(S)\mathrm{-mod}\cong \widehat{T}(\tilde{S})\mathrm{-mod}$ and $\mathcal{P}(S, W)\mathrm{-mod}\cong \mathcal{P}(\tilde{S}, \tilde{W})\mathrm{-mod}$. Before giving the general construction we give one example, which can be deduced from \cite{soderberg2024mutation}.

\begin{myex}\label{example - modify quiver}
	Let $S^1 = S^2$ be the $\R$-species $\C \xrightarrow{\C} \R$. Then $T(S^1)\otimes_\R T(S^2)$ is given by
	\begin{equation*}
		\begin{tikzcd}[column sep=1cm, row sep=1cm]
			\C\otimes_\R \C \ar[r, "\C\otimes_\R \C"] \ar[d, "\C\otimes_\R \C"] & \C\otimes_\R \R \ar[d, "\C\otimes_\R \R"] \\
			\R \otimes_\R \C \ar[r, "\R \otimes_\R \C"] & \R\otimes_\R \R
		\end{tikzcd}
	\end{equation*}
	or equivalently
	\begin{equation*}
		\begin{tikzcd}[column sep=1cm, row sep=1cm]
			\C\otimes_\R \C \ar[r, "\C\otimes_\R \C"] \ar[d, "\C\otimes_\R \C"] & \C \ar[d, "\C"] \\
			\C \ar[r, "\C"] & \R
		\end{tikzcd}
	\end{equation*}
	and certain relations. However $\C\otimes_\R \C$ is not a division $\R$-algebra, but $\C\otimes_\R \C \cong \C\oplus \C$, and thus we need to modify the quiver. According to \cite[Table 1]{soderberg2024mutation} $T(S^1)\otimes_\R T(S^2)$ is
	\begin{equation*}
		\begin{tikzcd}
			\C \arrow[rr, "\C"] \arrow[dd, "\C"] & & \C \arrow[dl, swap, "\C"] \\
			& \R \\
			\C \arrow[ur, swap, "\C"] & & \C \arrow[ll, "\C"] \arrow[uu, "\C"]
		\end{tikzcd}.
	\end{equation*}
\end{myex}

Let us now describe the general construction. Let $S^1 = (D^1, M^1)$ and $S^2 = (D^2, M^2)$ be species over $Q^1$ and $Q^2$ respectively. Set $S = S(S^1, S^2)$ and $W = W(S^1, S^2)$. Let us first describe $D = D^1\otimes_\K D^2$. Note that $F\otimes_\K F \cong F$, $F\otimes_\K G\cong G$ and $G\otimes_\K F\cong G$. For the last possibility $G\otimes_\K G$ we apply the Artin-Wedderburn theorem to get that
\begin{equation*}
	\phi: G\otimes_\K G \cong \bigoplus_{k=1}^nM_{N_k}(D_k'),
\end{equation*}
where $N_k$ and $n$ are positive integers and $D_k'$ are division $\K$-algebras. To simplify the notation we let $Q = Q^1\tilde{\otimes}Q^2$ and $V = \{(i, j)\in Q_0 \mid D_i^1 = D_j^2 = G\}$. We proceed to modify the quiver $Q$. Just as in Example~\ref{example - modify quiver}, the vertices in $V$ will be split up, whereas $Q_0\backslash V$ will remain. In the process arrows starting or ending in $V$ will be modified similarly. Let $x = (i, j)\in Q_0$. If $x\in V$ then we let
\begin{equation*}
	e_{xk} = \phi^{-1}(1_{M_{N_k}(D_k')}) \in D_i^1\otimes_\K D_j^2 \subseteq D^1\otimes_\K D^2.
\end{equation*}
We set
\begin{equation*}
	m_x = \begin{cases}
		n, & \text{if } x\in V \\
		1, & \text{if } x\in Q_0\backslash V
	\end{cases}
\end{equation*}
If $x \in Q_0 \backslash V$ we let $e_{xk} = 1_{D_i^1\otimes_\K D_j^2}$ so that
\begin{equation*}
	1_{D_i^1\otimes_\K D_j^2} = \sum_{k=1}^{m_x}e_{xk}
\end{equation*}
and
\begin{equation*}
	1_{T(S)} = 1_{D^1\otimes_\K D^2} = \sum_{x\in Q_0}\sum_{k=1}^{m_x}e_{xk}.
\end{equation*}
Let $Q'$ be the quiver given by
\begin{equation*}
	\begin{aligned}
		Q_0' &= \{(x, k) \mid x\in Q_0, k\in \{1, 2, \dots, m_x\} \}, \\
		Q_1' &= \{\alpha_{kk'}: (s(\alpha), k) \to (t(\alpha), k') \mid \alpha\in Q_1, k\in \{1, 2, \dots, m_{s(\alpha)}\}, k'\in \{1, 2, \dots, m_{t(\alpha)}\} \}.
	\end{aligned}
\end{equation*}
Define the species $S'$ over $Q'$ as
\begin{equation*}
	\begin{aligned}
		D_{(x, k)} = e_{xk}D^1\otimes_\K D^2 e_{xk}, \\
		M_{\alpha_{kk'}} = e_{t(\alpha)k'}M_\alpha e_{s(\alpha)k}.
	\end{aligned}
\end{equation*}
If $x\in V$ then we identify $D_{(x, k)}$ with $M_{N_k}(D_k')$ via $\phi$. If $x = (i, j)\in Q_0\backslash V$ then $D_{(x, k)} = D_i^1\otimes_\K D_j^2$. In this case we identify $D_{(x, k)}$ with $F$ whenever $D_i^1 = D_j^2 = F$ and $G$ otherwise.

Note that if $s(\alpha), t(\alpha) \in V$ then
\begin{equation*}
	M_{\alpha_{kk'}} = \begin{cases}
		\phi^{-1}(M_{N_k}(D_k')), & \text{if }k = k', \\
		0, & \text{if } k\neq k'.
	\end{cases}
\end{equation*}
Therefore $S'$ can be seen as a species over the quiver $\tilde{Q} = \tilde{Q}(S^1, S^2)$ defined by
\begin{equation*}
	\begin{aligned}
		\tilde{Q}_0 =& Q_0' \\
		\tilde{Q}_1 =& \{\alpha_{kk'}: (s(\alpha), k) \to (t(\alpha), k') \mid \alpha\in Q_1, s(\alpha)\not\in V \text{ or } t(\alpha)\not\in V, k\in \{1, 2, \dots, m_{s(\alpha)}\}, k'\in \{1, 2, \dots, m_{t(\alpha)}\} \} \cup \\
		&\cup \{\alpha_{kk}: (s(\alpha), k) \to (t(\alpha), k) \mid \alpha\in Q_1, s(\alpha), t(\alpha)\in V, k\in \{1, 2, \dots, n\} \}.
	\end{aligned}
\end{equation*}
In other words, $\tilde{Q}$ is obtained by removing all arrows $\alpha_{kk'}$ where $M_{\alpha_{kk'}} = 0$. We also let $\b_{S'}$ be induced by $\b_S$. By construction we immediately have that $T(S)\cong T(S')$ and $\widehat{T}(S)\cong \widehat{T}(S')$. Recall that $Q$ is a quiver with cycles where the cycles are induced by $W$. Let $W'$ to be the image of $W$ under the isomorphism $\widehat{T}(S)\cong \widehat{T}(S')$. Let us now compute $\tilde{Q}_2$. For this we need the following useful lemma. 

\begin{mylemma}\label{lemma - c_alpha x 1 casimir}
	Let $\alpha\in Q^1_1$ and $c_\alpha\in M_\alpha^1 \otimes_{D^1}{M_\alpha^1}^*$ be the Casimir element. Then 
	\begin{equation*}
		c_\alpha \otimes 1_{D^2_j} = \sum_{a\in \underline{\alpha}} (a\otimes 1_{D^2_j})\otimes (a^*\otimes 1_{D^2_j}) \in (M_\alpha^1\otimes_\K D^2_j)\otimes_D (M_\alpha^1\otimes_\K D^2_j)^*
	\end{equation*}
	is a Casimir element, for any $j\in Q^2_0$.
\end{mylemma}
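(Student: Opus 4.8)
The plan is to verify directly that $\sum_{a\in\underline{\alpha}}(a\otimes 1_{D^2_j})\otimes(a^*\otimes 1_{D^2_j})$ satisfies the defining property of a Casimir element, using the dual basis lemma (Lemma~\ref{lemma - dual basis}) together with the explicit formula for $\b_{S(S^1,S^2)}$. Recall first that, by the convention recorded before Definition~\ref{definition - S(S^1, S^2)}, saying that $c_\alpha=\sum_{a\in\underline{\alpha}}a\otimes a^*$ is the Casimir element of $M_\alpha^1\otimes_{D^1_{s(\alpha)}}{M_\alpha^1}^*$ associated to $\b_{S^1}$ means exactly that $\sum_{a\in\underline{\alpha}}a\otimes\b_{S^1}(a^*\otimes-)$ is the Casimir element of $M_\alpha^1\otimes_{D^1_{s(\alpha)}}\Hom_{(D^1_{s(\alpha)})^{op}}(M_\alpha^1,D^1_{s(\alpha)})$; by Lemma~\ref{lemma - dual basis}(2) this is equivalent to
\begin{equation*}
	x = \sum_{a\in\underline{\alpha}} a\,\b_{S^1}(a^*\otimes x) \qquad \text{for all } x\in M_\alpha^1. \tag{$\ast$}
\end{equation*}

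First I would fix the ambient data on the tensor side. The arrow $(\alpha,j)$ of $Q^1\tilde{\otimes}Q^2$ runs from $(s(\alpha),j)$ to $(t(\alpha),j)$, its bimodule is $M_\alpha^1\otimes_\K D^2_j$, and the relevant ground ring is $D_{(s(\alpha),j)}=D^1_{s(\alpha)}\otimes_\K D^2_j$; under the identification $M^*={M^1}^*\otimes_\K D^2\oplus\cdots$ of Definition~\ref{definition - S(S^1, S^2)}, the element $a^*\otimes 1_{D^2_j}$ is read as an element of $(M_\alpha^1\otimes_\K D^2_j)^*$. By the dual basis lemma applied with the same convention as above, it then suffices to establish
\begin{equation*}
	y = \sum_{a\in\underline{\alpha}} (a\otimes 1_{D^2_j})\,\b_{S(S^1,S^2)}\big((a^*\otimes 1_{D^2_j})\otimes y\big) \qquad \text{for all } y\in M_\alpha^1\otimes_\K D^2_j. \tag{$\ast\ast$}
\end{equation*}
Note that proving $(\ast\ast)$ does double duty: it simultaneously shows that $M_\alpha^1\otimes_\K D^2_j$ is finitely generated projective over $D_{(s(\alpha),j)}$ and, by uniqueness of the Casimir element, identifies $\sum_a(a\otimes 1)\otimes(a^*\otimes 1)$ as the Casimir element associated to $\b_{S(S^1,S^2)}$.

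Finally, by $\K$-bilinearity it is enough to check $(\ast\ast)$ on elementary tensors $y=x\otimes d$ with $x\in M_\alpha^1$ and $d\in D^2_j$, and here the computation is routine: since $a^*\in{M^1}^*$, $1_{D^2_j}\in D^2$, $x\in M^1$ and $d\in D^2$, the matching branch of the definition of $\b_{S(S^1,S^2)}$ in Definition~\ref{definition - S(S^1, S^2)} yields $\b_{S(S^1,S^2)}((a^*\otimes 1_{D^2_j})\otimes(x\otimes d))=\b_{S^1}(a^*\otimes x)\otimes d$, whence
\begin{equation*}
	\sum_{a\in\underline{\alpha}}(a\otimes 1_{D^2_j})\big(\b_{S^1}(a^*\otimes x)\otimes d\big) = \Big(\sum_{a\in\underline{\alpha}} a\,\b_{S^1}(a^*\otimes x)\Big)\otimes d = x\otimes d = y
\end{equation*}
by $(\ast)$. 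I expect the only genuinely delicate points to be bookkeeping: selecting the correct case in the piecewise definition of $\b_{S(S^1,S^2)}$, respecting the right $D_{(s(\alpha),j)}$-module structure when distributing the multiplication over $\otimes_\K$, and correctly reading $a^*\otimes 1_{D^2_j}$ through the identification of $(M_\alpha^1\otimes_\K D^2_j)^*$ with ${M_\alpha^1}^*\otimes_\K D^2_j$. Once these are pinned down, the reduction of $(\ast\ast)$ to $(\ast)$ is immediate.
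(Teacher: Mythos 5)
Your proof is correct and follows the same route as the paper, whose entire proof is the one-line remark that the claim ``directly follows from the definition of $\b$ for $S(S^1, S^2)$''; your write-up simply makes that explicit by checking the dual basis property $(\ast\ast)$ on elementary tensors and reducing it, via the matching branch of the piecewise definition of $\b_{S(S^1,S^2)}$, to the Casimir property $(\ast)$ for $\b_{S^1}$. The bookkeeping points you flag (choice of branch, the identification $(M_\alpha^1\otimes_\K D^2_j)^*\cong {M_\alpha^1}^*\otimes_\K D^2_j$, and uniqueness of the Casimir element) are exactly the content the paper leaves implicit.
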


\begin{proof}
	This directly follows from the definition of $\b$ for $S(S^1, S^2)$.
\end{proof}

Now using that
\begin{equation*}
	1_{D^1_{s(\alpha)}\otimes_\K D^2_j} = \sum_{k=1}^{m_{(s(\alpha), j)}} e_{(s(\alpha), j)k} 
\end{equation*}
we have that
\begin{equation*}
	c_\alpha \otimes 1_{D^2_j} = \sum_{\substack{a\in \underline{\alpha} \\ k\in \{1, 2, \dots, m_{(s(\alpha), j)}\}}} (a\otimes 1_{D^2_j})e_{(s(\alpha), j)k}\otimes e_{(s(\alpha), j)k} (a^*\otimes 1_{D^2_j}).
\end{equation*}

\begin{mylemma}\label{lemma - fixed k casimir}
	For a fixed $k$
	\begin{equation}\label{eq - idemp casimir element}
		\sum_{a\in \underline{\alpha}} (a\otimes 1_{D^2_j})e_{(s(\alpha), j)k}\otimes e_{(s(\alpha), j)k} (a^*\otimes 1_{D^2_j}) \in (M_\alpha^1\otimes_\K D^2_j)e_{(s(\alpha), j)k}\otimes_D ((M_\alpha^1\otimes_\K D^2_j)e_{(s(\alpha), j)k})^*
	\end{equation}
	is a Casimir element.
\end{mylemma}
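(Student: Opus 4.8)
The plan is to verify directly, via the Dual Basis Lemma (Lemma~\ref{lemma - dual basis}), that \eqref{eq - idemp casimir element} satisfies the defining property of a Casimir element for the right module $(M_\alpha^1\otimes_\K D^2_j)e$ over $D_{(s(\alpha),j)} = D^1_{s(\alpha)}\otimes_\K D^2_j$, where I abbreviate $P=M_\alpha^1\otimes_\K D^2_j$ and $e=e_{(s(\alpha),j)k}$ and write $\b=\b_{S'}$. The input is Lemma~\ref{lemma - c_alpha x 1 casimir}: the element $\sum_{a\in\underline\alpha}(a\otimes 1)\otimes(a^*\otimes 1)$ is the Casimir element of $P$, which by Lemma~\ref{lemma - dual basis} is equivalent to
\[
	y=\sum_{a\in\underline\alpha}(a\otimes 1)\,\b\big((a^*\otimes 1)\otimes y\big)\qquad\text{for all }y\in P.
\]
The structural fact driving the argument is that $e$ is central in $D_{(s(\alpha),j)}$: when $(s(\alpha),j)\in V$ it is the block idempotent $\phi^{-1}(1_{M_{N_k}(D_k')})$ of the Wedderburn decomposition, and when $(s(\alpha),j)\notin V$ it equals $1$.

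First I would restrict to the summand $Pe$ and unwind the idempotents. For $y\in Pe$ we have $y=ye$, so right $D_{(s(\alpha),j)}$-linearity of $\b$ in its second argument gives that $d_a:=\b((a^*\otimes 1)\otimes y)$ satisfies $d_a=d_a e$; since $e$ is central, also $d_a=ed_a$. Using left $D_{(s(\alpha),j)}$-linearity of $\b$ in its first argument then yields $\b(e(a^*\otimes 1)\otimes y)=e\,d_a=d_a$, and $(a\otimes 1)e\cdot d_a=(a\otimes 1)(ed_a)=(a\otimes 1)d_a$. Substituting into the identity above gives
\[
	\sum_{a\in\underline\alpha}(a\otimes 1)e\,\b\big(e(a^*\otimes 1)\otimes y\big)=\sum_{a\in\underline\alpha}(a\otimes 1)\,\b\big((a^*\otimes 1)\otimes y\big)=y
\]
for every $y\in Pe$. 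Under the $\b$-duality identification placing $e(a^*\otimes 1)$ in $(Pe)^*$, this is exactly the dual basis property of Lemma~\ref{lemma - dual basis}, so \eqref{eq - idemp casimir element} is the asserted Casimir element.

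The hard part is not any computation but the bimodule bookkeeping: one must confirm that $\b_{S'}$ is genuinely the restriction of $\b_{S(S^1,S^2)}$ and that $e$ acts centrally on $D_{(s(\alpha),j)}$, which is what licenses sliding the idempotent freely across $\b$ and across the right action defining $Pe$. Once centrality is established the verification is formal. As a consistency check, summing the $k$ Casimir elements \eqref{eq - idemp casimir element} and using $1=\sum_k e_{(s(\alpha),j)k}$ recovers $c_\alpha\otimes 1_{D^2_j}$, matching the decomposition of $P$ into its central summands $Pe_{(s(\alpha),j)k}$.
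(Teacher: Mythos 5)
Your proof is correct and follows essentially the same route as the paper: both verify the defining Casimir/dual-basis equations for elements of $(M_\alpha^1\otimes_\K D^2_j)e_{(s(\alpha),j)k}$ by sliding the idempotent across the pairing $\b$ and across the tensor product, and then invoking Lemma~\ref{lemma - c_alpha x 1 casimir} to conclude. The only cosmetic difference is that you justify the sliding via centrality of the Wedderburn block idempotent, whereas the paper phrases it through the identification $\b(\xi\otimes -)=\t_r^{-1}(\xi)(-)$ so that left multiplication by $e$ on $a^*\otimes 1$ becomes right multiplication by $e$ on the module argument; both come to the same computation.
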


\begin{proof}
	To prove this we need to verify the equations \eqref{eq - casimir elements equations ass. to b}. Let $x\in (M_\alpha^1\otimes_\K D^2_j)e_{(s(\alpha), j)k}$. Clearly, $xe_{(s(\alpha), j)k} = x$. Using that
	\begin{equation*}
		\begin{aligned}
			\b(e_{(s(\alpha), j)k} (a^*\otimes 1_{D^2_j}) \otimes x)\in D'_{((s(\alpha), j), k)} &= \t_r^{-1} ((a^*\otimes 1_{D^2_j}))(xe_{(s(\alpha), j)k}) \\
			&= \t_r^{-1} ((a^*\otimes 1_{D^2_j}))(x) = \\
			&= \b((a^*\otimes 1_{D^2_j}) \otimes x) \in D'_{((s(\alpha), j), k)}
		\end{aligned}
	\end{equation*}
	we get that
	\begin{equation*}
		\begin{aligned}
			\sum_{a\in \underline{\alpha}} (a\otimes 1_{D^2_j})e_{(s(\alpha), j)k}\otimes \b(e_{(s(\alpha), j)k} (a^*\otimes 1_{D^2_j}) \otimes x) &= \sum_{a\in \underline{\alpha}} (a\otimes 1_{D^2_j})e_{(s(\alpha), j)k}\otimes \b((a^*\otimes 1_{D^2_j}) \otimes x) = \\
			&= \sum_{a\in \underline{\alpha}} (a\otimes 1_{D^2_j})\otimes e_{(s(\alpha), j)k}\b((a^*\otimes 1_{D^2_j}) \otimes x) = \\
			&= \sum_{a\in \underline{\alpha}} (a\otimes 1_{D^2_j})\otimes \b((a^*\otimes 1_{D^2_j}) \otimes x) = x.
		\end{aligned}
	\end{equation*}
	The last equality follows due to Lemma~\ref{lemma - c_alpha x 1 casimir}. Similarly, the other equation in \eqref{eq - casimir elements equations ass. to b} is satisfied. Hence \eqref{eq - idemp casimir element} is a Casimir element.
\end{proof}

The new potential $W'$, i.e. the image of $W$, is explicitly described as
\begin{equation}\label{eq - potential W'}
	\begin{aligned}
		W' =& \sum_{\substack{(\alpha, \beta)\in Q_1^1\times Q_1^2 \\ (a, b')\in \underline{\alpha}\times \overline{\beta} \\ k\in \{1, 2, \dots, m_{(s(\alpha), s(\beta))}\} \\ k'\in \{1, 2, \dots, m_{(t(\alpha), t(\beta))}\} \\ k''\in \{1, 2, \dots, m_{(t(\alpha), s(\beta))}\} }} W_{a, b'}^+ - \sum_{\substack{(\alpha, \beta)\in Q_1^1\times Q_1^2 \\ (a', b)\in \overline{\alpha}\times \underline{\beta} \\ k\in \{1, 2, \dots, m_{(s(\alpha), s(\beta))}\} \\ k'\in \{1, 2, \dots, m_{(t(\alpha), t(\beta))}\} \\ k''\in \{1, 2, \dots, m_{(s(\alpha), t(\beta))}\} }} W_{a', b}^- \\
		W_{a, b'}^+ =& e_{(t(\alpha), s(\beta))k''}(a\otimes e_{s(\beta)})e_{(s(\alpha), s(\beta))k}\otimes e_{(s(\alpha), s(\beta))k}(a^*\otimes {b'}^*)e_{(t(\alpha), t(\beta))k'}\otimes e_{(t(\alpha), t(\beta))k'}(e_{t(\alpha)}\otimes b')e_{(t(\alpha), s(\beta))k''} \\
		W_{a', b}^- =& e_{(s(\alpha), t(\beta), k'')}(e_{s(\alpha)}\otimes b)e_{(s(\alpha), s(\beta))k}\otimes e_{(s(\alpha), s(\beta))k}({a'}^*\otimes b^*)e_{(t(\alpha), t(\beta))k'}\otimes e_{(t(\alpha), t(\beta))k'}(a'\otimes e_{t(\beta)})e_{(s(\alpha), t(\beta), k'')}
	\end{aligned}
\end{equation}
which is of the same form as \eqref{eq - potential for tensor species}. Thus
\begin{equation*}
	\tilde{Q}_2 = \{\alpha_{kk'}\beta_{k'k''}\gamma_{k''k} \mid \alpha_{kk'}, \beta_{k'k''}, \gamma_{k''k}\in \tilde{Q}_1, \alpha\beta\gamma\in Q_2, k\in \{1, \dots, m_{t(\alpha)}\}, k'\in \{1, \dots, m_{t(\beta)}\}, k''\in \{1, \dots, m_{t(\gamma)}\} \}.
\end{equation*}

Finally we construct the Morita equivalent species with potential $\tilde{S}$ and $\tilde{W}$ over the same quiver with cycles $\tilde{Q}$ in the following way. For every $(x, k)\in \tilde{Q}_0$ we let
\begin{equation*}
	\tilde{e}_{xk} = \phi^{-1}\left(\begin{bmatrix}
		1 & 0 & \cdots & 0 \\
		0 & 0 & & 0 \\
		\vdots & & \ddots & \vdots \\
		0 & 0 & \cdots & 0
	\end{bmatrix}\in M_{N_k}(D'_k)\right).
\end{equation*}
Now let $\tilde{S}(S^1, S^2)$ be the species over $\tilde{Q} = \tilde{Q}(S^1, S^2)$ defined as
\begin{equation*}
	\begin{aligned}
		\tilde{D}_{(x, k)} = \tilde{e}_{xk}D_{(x, k)}'\tilde{e}_{xk} \cong D_{k}' \\
		\tilde{M}_{\alpha_{kk'}} = \tilde{e}_{t(\alpha)k'}M_{\alpha_{kk'}}\tilde{e}_{s(\alpha)k}.
	\end{aligned}
\end{equation*}
and
\begin{equation*}
	\begin{aligned}
		\tilde{W} =& \sum_{\substack{(\alpha, \beta)\in Q_1^1\times Q_1^2 \\ (a, b')\in \underline{\alpha}\times \overline{\beta} \\ k\in \{1, 2, \dots, m_{(s(\alpha), s(\beta))}\} \\ k'\in \{1, 2, \dots, m_{(t(\alpha), t(\beta))}\} \\ k''\in \{1, 2, \dots, m_{(t(\alpha), s(\beta))}\} }} \tilde{W}_{a, b'}^+ - \sum_{\substack{(\alpha, \beta)\in Q_1^1\times Q_1^2 \\ (a', b)\in \overline{\alpha}\times \underline{\beta} \\ k\in \{1, 2, \dots, m_{(s(\alpha), s(\beta))}\} \\ k'\in \{1, 2, \dots, m_{(t(\alpha), t(\beta))}\} \\ k''\in \{1, 2, \dots, m_{(s(\alpha), t(\beta))}\} }} \tilde{W}_{a', b}^- \\
		\tilde{W}_{a, b'}^+ =& \tilde{e}_{(t(\alpha), s(\beta))k''}(a\otimes e_{s(\beta)})\tilde{e}_{(s(\alpha), s(\beta))k}\otimes \tilde{e}_{(s(\alpha), s(\beta))k}(a^*\otimes {b'}^*)\tilde{e}_{(t(\alpha), t(\beta))k'}\otimes \tilde{e}_{(t(\alpha), t(\beta))k'}(e_{t(\alpha)}\otimes b')\tilde{e}_{(t(\alpha), s(\beta))k''} \\
		\tilde{W}_{a', b}^- =& \tilde{e}_{(s(\alpha), t(\beta), k'')}(e_{s(\alpha)}\otimes b)\tilde{e}_{(s(\alpha), s(\beta))k}\otimes \tilde{e}_{(s(\alpha), s(\beta))k}({a'}^*\otimes b^*)\tilde{e}_{(t(\alpha), t(\beta))k'}\otimes \tilde{e}_{(t(\alpha), t(\beta))k'}(a'\otimes e_{t(\beta)})\tilde{e}_{(s(\alpha), t(\beta), k'')}
	\end{aligned}
\end{equation*}
Again, we let $\b_{\tilde{S}}$ to be induced by $\b_S$. To verify that the support of $\tilde{W}$ give rise to the cycles in $\tilde{Q}_2$, one needs to apply the same reasoning as in the proof of Lemma~\ref{lemma - fixed k casimir}.

\begin{proof}(Proof of Proposition~\ref{proposition - F-conn morita equiv})
	The cuts in $\tilde{Q}$ that correspond to the cuts $C_1$, $C_2$ and $C_3$ in Lemma~\ref{lemma - finite tress product trans} show that $\tilde{Q}$ is covered and has enough cuts.

	(1) It is left to show that $\tilde{Q}$ is simply connected. Let us first look at the quivers $\tilde{Q}(F, S^2)$ and $\tilde{Q}(G, S^2)$. Here $F$ and $G$ are viewed as species over a quiver with one vertex. Similarly we have the quivers $\tilde{Q}(S^1, F)$ and $\tilde{Q}(S^1, G)$. Now choose an inclusion $\iota: \tilde{Q}(F, S^2) \to \tilde{Q}(G, S^2)$ and a projection $\pi: \tilde{Q}(G, S^2) \to \tilde{Q}(F, S^2)$ such that $\pi\circ \iota = id_{\tilde{Q}(F, S^2)}$.

	It is enough to show that every cycle that lies entirely on the $1$-cells in $X_{\tilde{Q}}$ is homotopic to a point. First note that such a cycle is homotopic to a walk of the form
	\begin{equation*}
		\gamma_1\delta_1\gamma_2\delta_2\cdots \gamma_n\delta_n
	\end{equation*}
	where $\gamma_i$ are walks in $\bigcup_{j\in Q^2_0} \tilde{Q}(S^1, D^2_j)$ and $\delta_i$ are walks in $\bigcup_{i\in Q^1_0}\tilde{Q}(D^1_i, S^2)$. We will show that this walk is homotopic to a point by using induction on the number of vertices in $Q^1$. If $|Q^1_0| = 1$, then since $F$ appears in $S^1$ we have $S^1 = F$. Since $\tilde{Q}(F, S^2) \cong Q^2$ is a tree it is simply connected.

	For the induction step we choose a suitable orientation of $Q^1$. This leads to no loss of generality since the canvas does not depend on this orientation. More specifically, orient $Q^1$ so that there is an arrow $\alpha\in Q^1_1$ where $s(\alpha)$ is the only neighbour of $t(\alpha)$ and there is $k\in Q_0^1\backslash \{t(\alpha)\}$ where $D_k^1 = F$. This is possible since $Q^1$ is a tree and thus has two leaves. Note in particular, that if $Q^1$ has only one arrow the first condition is automatic. This ensures that $F$ still appears in the subspecies over the quiver where we have removed $t(\alpha)$ and $\alpha$ from $Q^1$. We have four cases for $D_{s(\alpha)}^1\xrightarrow{M_\alpha^1}D_{t(\alpha)}^1$.
	\begin{enumerate}[label=(\alph*)]
		\item $F\xrightarrow{M_\alpha^1}F$
		\item $F\xrightarrow{M_\alpha^1}G$
		\item $G\xrightarrow{M_\alpha^1}G$
		\item $G\xrightarrow{M_\alpha^1}F$
	\end{enumerate}
	For all these cases we will use the following argument. Pick a walk that goes through $X_{\tilde{Q}(D_{t(\alpha)}^1, S^2)}\subseteq X_{\tilde{Q}}$. Such a walk is of the form
	\begin{equation*}
		\beta_3(\alpha, v)_{dc}^{-1} \beta_2 (\alpha, u)_{ba}\beta_1,
	\end{equation*}
	for some walks $\beta_1$, $\beta_2$ and $\beta_3$, vertices $u,v\in Q_2$ and positive integers $a,b,c$ and $d$, where $\beta_2$ lies in $X_{\tilde{Q}(D^1_{t(\alpha)}, S^2)}$. If we can show that
	\begin{equation*}
		(\alpha, v)_{dc}^{-1} \beta_2 (\alpha, u)_{ba}
	\end{equation*}
	is homotopic to a walk that does not go through $X_{\tilde{Q}(D_{t(\alpha)}^1, S^2)}$ then we are done by induction. Via an inductive argument on the length of $\beta_2$, it is enough to show for the case when $\beta_2$ has length one. Again since the orientation of $Q^2$ does not matter, we can just pick an orientation such that $\beta_2 = (t(\alpha), \gamma)_{cb}$ where $\gamma\in Q^2_1$. Thus our walk is
	\begin{equation*}
		(\alpha, t(\gamma))^{-1}_{dc}(t(\alpha), \gamma)_{cb} (\alpha, s(\gamma))_{ba}.
	\end{equation*}

	The first three cases follow naturally since we have the following adjacent $2$-cells.
	\begin{equation*}
		\begin{tikzcd}[column sep=2cm, row sep=2cm]
			(s(\alpha), s(\gamma)) \arrow[r, "(\alpha{,}s(\gamma))_{ba}"] \arrow[d, "(s(\alpha){,}\gamma)_{da}"] & (t(\alpha), s(\gamma)) \arrow[d, "(t(\alpha){,} \gamma)_{cb}"] \\
			(s(\alpha), t(\gamma)) \arrow[r, "(\alpha{,} t(\gamma))_{cd}"] & (t(\alpha), t(\gamma)) \ar[ul, "(\alpha^*{,} \gamma^*)_{ad}", swap]
		\end{tikzcd}
	\end{equation*}
	In other words, we have the following homotopic maps
	\begin{equation*}
		(\alpha, t(\gamma))_{cd}^{-1} (t(\alpha), \gamma)_{cb} (\alpha, s(\gamma))_{ba} \sim (s(\alpha, \gamma))_{da}.
	\end{equation*}

	For the last case $G\xrightarrow{M_\alpha}F$ we need to work a little. We will prove this case in two steps. In the first step let $i, j\in Q_2$ such that $D_i^2 = F$. Let $\delta_j$ be a walk in $X_{\tilde{Q}(D_{t(\alpha)}^1, S^2)}$ from $(s(\alpha), i)$ to $(s(\alpha), j)$. We claim that
	\begin{equation*}
		(\alpha, j)_{1a} \sim \pi(\delta_j) (\alpha, i)_{11} \delta_j^{-1}.
	\end{equation*}
	Note that $\pi(\delta_j)\in X_{\tilde{Q}(D_{t(\alpha)}^1, S^2)}$ is defined as $D_{s(\alpha)}^1 = G$ and $D_{t(\alpha)}^1 = F$. As before we may assume that $\delta_j$ has length $1$ and the claim follows as in the previous cases.

	Now the result follows by
	\begin{equation*}
		\begin{aligned}
			(\alpha, t(\gamma))_{1d}^{-1} (t(\alpha), \gamma)_{11} (\alpha, s(\gamma))_{1a} &\sim \delta_{t(\gamma)} (\alpha, i)_{11}^{-1} \pi(\delta_{t(\gamma)})^{-1} (t(\alpha), \gamma)_{11} \pi(\delta_{s(\gamma)}) (\alpha, i)_{11} \delta_{s(\gamma)}^{-1} \sim \\
			&\sim \delta_{t(\gamma)} (\alpha, i)_{11}^{-1} (\alpha, i)_{11} \delta_{s(\gamma)}^{-1} \sim \\
			&\sim \delta_{t(\gamma)} \delta_{s(\gamma)}^{-1}
		\end{aligned}
	\end{equation*}
	where the second homotopy equivalence follows from $\tilde{Q}(D_{t(\alpha)}^1, S^2)$ being simply connected. This completes the proof of (1).

	(2) Now let us assume that $D_i^1 = D_j^2 = G$ for all $i\in Q_0^1$ and $j\in Q_0^2$. In other words, $S^1 = G \otimes_\K \K Q^1$ and $S^2= G \otimes_\K \K Q^2$. Note that
	\begin{equation*}
		T(S)\cong (G\otimes_\K G)\otimes_\K \K (Q^1\tilde{\otimes} Q^2) \cong \bigoplus_{k=1}^n M_{N_k}(D_k')\otimes_\K \K (Q^1\tilde{\otimes} Q^2).
	\end{equation*}
	Thus $T(\tilde{S}) = \bigoplus_{k=1}^n D_k'\otimes_\K \K (Q^1\tilde{\otimes} Q^2)$. The quiver with cycles $\tilde{Q}$ has $n$ connected components corresponding to $D_k'\otimes_\K \K (Q^1\tilde{\otimes} Q^2)$. All connected components are simply connected by the same argument as in the proof of Lemma~\ref{lemma - finite tress product trans}.
\end{proof}

\iffalse

\begin{myrem}\label{remark - rep finite F connected}
	In the more general case when we have field extensions $\K \subseteq F \subseteq G$ the quiver of $S'$ is not necessarily connected. Applying the argument in the proof of Proposition~\ref{proposition - F-conn morita equiv} implies that $Q'$ will be a quiver with cycles which is covered and has enough cuts such that each connected component is simply connected. In this more general setting, all representation finite species are species of type $(F \mid G)$ (see \cite{dlab1975algebras}). 
\end{myrem}

\fi

\begin{myex}\label{example - s tilde not simply connected}
	Let $\K = \R$, $F = \R$ and $G = \C$. Using \cite[Lemma 6.12]{soderberg2024mutation} we have that $\C \otimes \C \cong \C \times \C$. Let us first produce a non-example. Let $S^1 = G$ and $S^2 = F \xrightarrow{G}G\xrightarrow{G}F$. Then $T(S(S^1, S^2))\cong T(S')$ where $S'$ is the species
	\begin{equation*}
		\begin{tikzcd}[row sep=0cm]
			& G \arrow[dr, "G"] \\
			G \arrow[ru, "G"] \arrow[dr, "G"] & & G \\
			& G \arrow[ru, "G"]
		\end{tikzcd}.
	\end{equation*}
	Here $Q_2' = 0$ and therefore $X_{Q'}$ is homeomorphic to a circle. Hence $Q'$ is not simply connected.

	Let us modify the above example so it fits in Proposition~\ref{proposition - F-conn morita equiv}. If we let $S^1 = G\xrightarrow{G}F$. Then $T(S(S^1, S^2))\cong T(S')$ where $S'$ is the species
	\begin{equation*}
		\begin{tikzcd}[row sep=1cm, column sep=2cm]
			& G \arrow[rrd, "G"] \arrow[dddd, "G"] \arrow[from = ddddrr, "G"] \\
			G \arrow[ru, "G"] \arrow[ddd, "G"] \arrow[rr, "G\qquad", crossing over] & & G \arrow[r, "G"] \arrow[dddl, "G", crossing over] & G \arrow[ddd, "G"] \\ \\ \\
			G \arrow[r, "G"] & G \arrow[rr, "G"] \arrow[uuul, "G", swap] & & G \arrow[uuul, "G", swap]
		\end{tikzcd}.
	\end{equation*}
	Here $Q_2'$ consists of all cycles of order $3$. The space $X_{Q'}$ is homeomorphic to a $2$-dimensional disk and thus simply connected.
\end{myex}

\section{Truncated Jacobian Algebras}\label{section - truncated jacobian algebras}
In the spirit of Proposition~\ref{proposition - morita equivalent k-species} and Proposition~\ref{proposition - F-conn morita equiv} we assume from now on that all species are $\K$-species. In this section we define the truncated Jacobian algebras for species with potentials and cuts. We define the notion of algebraic cuts and prove that every cut of a self-injective species with potential is algebraic.

From the definition of the tensor algebra of a species $S$ we have a natural (completed) grading
\begin{equation*}
	\widehat{T}(S) = \prod_{i\ge 0}\widehat{T}(S)_i
\end{equation*}
where $\widehat{T}(S)_i = M^i$. For every subset $C\subseteq Q_1$, the grading $g_C$ induces a grading on $M$ by
\begin{equation*}
	M = M_0 \oplus M_1, \quad M_0=\bigoplus_{\alpha\not\in C}M_\alpha, \quad M_1 = \bigoplus_{\alpha\in C}M_\alpha.
\end{equation*}
We define $S_C = (D_i, M_\alpha)_{i\in Q_0, \alpha\in (Q_C)_1}$ over the quiver $Q_C = (Q_0, Q_1\backslash C)$. The grading induced by $g_C$ makes $\widehat{T}(S)$ into a bigraded $\K$-algebra, where
\begin{equation*}
	\begin{aligned}
		\widehat{T}(S)_{i, j} &= \bigoplus_{k_0 + k_1 + \dots + k_j = i - j} M_0^{k_0} \otimes_D M_1 \otimes_D M_0^{k_1} \otimes_D M_1 \otimes_D \cdots \otimes_D M_0^{k_{j-1}} \otimes_D M_1 \otimes_D M_0^{k_j}, \\
		\widehat{T}(S)_{i, \bullet} &= M^i, \\
		\widehat{T}(S)_{\bullet, j} &= \prod_{k_0, k_1, \dots, k_j\in \Z_{\ge 0}} M_0^{k_0} \otimes_D M_1 \otimes_D M_0^{k_1} \otimes_D M_1 \otimes_D \cdots \otimes_D M_0^{k_{j-1}} \otimes_D M_1 \otimes_D M_0^{k_j}.
	\end{aligned}
\end{equation*}
Note that $\widehat{T}(S_C) = \widehat{T}(S)_{\bullet, 0}$.

Now let $(S, W)$ be a species with potential and $C$ a cut. The grading induced by $C$ makes the Jacobian algebra $\mathcal{P}(S, W)$ a graded $\K$-algebra. More explicitly, let $R_\alpha = \{\partial_{\xi}(W) \mid \xi\in M_\alpha^*\}$. Then we can write
\begin{equation*}
	\begin{aligned}
		\mathcal{P}(S, W) &= \prod_{j\ge 0}\widehat{T}(S)_{\bullet, j} / \overline{I}_j \\
		I_j &= \sum_{\substack{\alpha\in C \\ x + y = j}}\widehat{T}(S)_{\bullet, x} R_\alpha \widehat{T}(S)_{\bullet, y} + \sum_{\substack{\alpha\not\in C \\ x + y = j-1}}\widehat{T}(S)_{\bullet, x} R_\alpha \widehat{T}(S)_{\bullet, y},
	\end{aligned}
\end{equation*}
where $\overline{I}_j$ is the closure of the $\widehat{T}(S_C)$-submodule $I_j\subseteq \widehat{T}(S)_{\bullet, j}$ with respect to the $\langle M_0 \rangle$-adic topology. We define the truncated Jacobian algebra $\mathcal{P}(S, W, C)$ as
\begin{equation*}
	\mathcal{P}(S, W, C) = \widehat{T}(S_C)/\mathcal{J}(S, W, C) = \widehat{T}(S_C)/\overline{\langle \partial_{a^*}(W) \mid a\in M_\alpha, \alpha\in C\rangle}.
\end{equation*}

\begin{myrem}\label{remark - Qc acyclic}
	In practice, $Q_C$ is an acyclic quiver since every known $2$-representation finite $\K$-algebra is acyclic. This is believed to be true in general, see \cite[Question 5.9]{HI14nrepinfinitealg}. In that case
	\begin{equation*}
		\widehat{T}(S)_{\bullet, j} = \bigoplus_{k_0, k_1, \dots, k_j =0}^N M_0^{k_0} \otimes_D M_1 \otimes_D M_0^{k_1} \otimes_D M_1 \otimes_D \cdots \otimes_D M_0^{k_{j-1}} \otimes_D M_1 \otimes_D M_0^{k_j},
	\end{equation*}
	where $N = \max\{k \mid M_0^k \not=0\}$. Thus $I_j = \overline{I}_j$ and we may write
	\begin{equation*}
		\mathcal{P}(S, W) = \prod_{j\ge 0}\widehat{T}(S)_{\bullet, j} / I_j.
	\end{equation*}
\end{myrem}

\begin{myprop}\label{proposition - degree zero of jacobian}
	For a species with potential $(S, W)$ and a cut $C$ we have that
	\begin{equation*}
		\mathcal{P}(S, W, C) = \mathcal{P}(S, W)_0
	\end{equation*}
	where the right hand side is the degree $0$ part with respect to the grading induced by $C$.
\end{myprop}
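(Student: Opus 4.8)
The plan is to unwind both definitions and observe that each side is a quotient of the same complete tensor algebra $\widehat{T}(S_C)$, so that the statement reduces to an equality of ideals. On the left, $\mathcal{P}(S,W,C)=\widehat{T}(S_C)/\mathcal{J}(S,W,C)$ by definition. On the right, the $C$-grading on $\mathcal{P}(S,W)$ gives, from the displayed decomposition $\mathcal{P}(S,W)=\prod_{j\ge0}\widehat{T}(S)_{\bullet,j}/\overline{I}_j$, that $\mathcal{P}(S,W)_0=\widehat{T}(S)_{\bullet,0}/\overline{I}_0$, and I would invoke the identification $\widehat{T}(S_C)=\widehat{T}(S)_{\bullet,0}$. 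Thus everything reduces to proving the identity of ideals $\overline{I}_0=\mathcal{J}(S,W,C)$ inside $\widehat{T}(S_C)$.

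First I would compute $I_0$ from its defining formula. For $j=0$ the second sum, ranging over $\alpha\notin C$ with $x+y=-1$, is empty since $x,y\ge0$; and in the first sum $x+y=0$ forces $x=y=0$. Hence
\begin{equation*}
	I_0=\sum_{\alpha\in C}\widehat{T}(S)_{\bullet,0}\,R_\alpha\,\widehat{T}(S)_{\bullet,0}.
\end{equation*}
The point making this well posed is that $R_\alpha\subseteq\widehat{T}(S)_{\bullet,0}$ for $\alpha\in C$: since $C$ is a cut, $W$ is homogeneous of degree $1$ for $g_C$, the cyclic permutation $\varepsilon_c$ preserves the $C$-degree, and the cyclic derivative $\partial_\xi$ with $\xi\in M_\alpha^*$ and $\alpha\in C$ contracts exactly one $C$-arrow through $\b$, lowering the $C$-degree from $1$ to $0$. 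This is also precisely the bookkeeping behind the asymmetry $x+y=j$ versus $x+y=j-1$ in the definition of $I_j$ (relations from $\alpha\notin C$ sit in $C$-degree $1$), which I would note in passing to confirm internal consistency.

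Next I would identify $I_0$ as the two-sided ideal of $\widehat{T}(S_C)=\widehat{T}(S)_{\bullet,0}$ generated by $\bigcup_{\alpha\in C}R_\alpha$, which is immediate from the displayed formula since $\widehat{T}(S_C)$ is unital. This matches the generating set of $\mathcal{J}(S,W,C)=\overline{\langle\partial_{a^*}(W)\mid a\in M_\alpha,\ \alpha\in C\rangle}$: the cyclic derivative is $\K$-linear in its covector argument, and $\{a^*\mid a\in M_\alpha\}$ spans $M_\alpha^*$, so $\{\partial_{a^*}(W)\mid a\in M_\alpha\}$ and $R_\alpha=\{\partial_\xi(W)\mid\xi\in M_\alpha^*\}$ span the same $\K$-subspace and generate the same two-sided ideal. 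Taking $\langle M_0\rangle$-adic closures then gives $\overline{I}_0=\mathcal{J}(S,W,C)$, and the proposition follows.

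The degree bookkeeping and the unital-ring description of the generated ideal are routine. The one point deserving care is the closure: I would check that the $\langle M_0\rangle$-adic topology used to form $\overline{I}_0$ is the same one used in the definition of $\mathcal{J}(S,W,C)$, so that closing the two (already equal) ideals yields the same closed ideal. When $Q_C$ is acyclic this is vacuous by Remark~\ref{remark - Qc acyclic}, since then $I_0=\overline{I}_0$.
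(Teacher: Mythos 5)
Your proof is correct and takes essentially the same route as the paper's: identify $\widehat{T}(S)_{\bullet,0}$ with $\widehat{T}(S_C)$, observe that $I_0$ reduces to $\sum_{\alpha\in C}\widehat{T}(S_C)\,R_\alpha\,\widehat{T}(S_C)$, match this with the generating set of $\mathcal{J}(S,W,C)$, and pass to closures. The paper states this in three lines; your additional verifications (the vanishing of the $\alpha\notin C$ sum for $j=0$, the containment $R_\alpha\subseteq\widehat{T}(S)_{\bullet,0}$, the spanning of $M_\alpha^*$ by dual elements, and the compatibility of the adic topologies) are exactly the details it leaves implicit.
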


\begin{proof}
	From the description of $S_C$ we have that
	\begin{equation*}
		\widehat{T}(S)_{\bullet, 0} = \widehat{T}(S_C).
	\end{equation*}
	Also note that
	\begin{equation*}
		I_0 = \sum_{\substack{\alpha\in C}}\widehat{T}(S_C) R_\alpha \widehat{T}(S_C) = \langle \partial_{a^*}(W) \mid a^*\in M_\alpha^*, \alpha\in C\rangle
	\end{equation*}
	and therefore $\overline{I}_0 = \mathcal{J}(S, W, C)$.
\end{proof}

\begin{mydef}
	A cut $C\subseteq Q$ is called algebraic if $\dim_\K \mathcal{P}(S, W, C)<\infty$ and the sequence of left $\mathcal{P}(S, W, C)$-modules
	\begin{equation}\label{eq - alg cut}
		0\to \bigoplus_{\substack{\beta\in C \\ t(\beta) = i}}P_{s(\beta)}^{|\underline{\beta}|} \xrightarrow{(\partial_{\overline{\beta^*}, \underline{\alpha^*}}W)_{\alpha, \beta}} \bigoplus_{\substack{\alpha\not\in C \\ s(\alpha) = i}}P_{t(\alpha)}^{\overline{\alpha}}\xrightarrow{(\overline{\alpha})_\alpha} P_i\to S_i\to 0
	\end{equation}
	is exact, where $P_i = \mathcal{P}(S, W, C)e_i$.
\end{mydef}

\begin{myrem}$\space$
	\begin{enumerate}
		\item Note that if $C$ is an algebraic cut we immediately have that $\mathcal{P}(S, W, C)$ has global dimension at most $2$.
		\item The above definition of algebraic cuts differs from the definition given in \cite[Definition 3.2]{HerschendOsamu2011quiverwithpotential}. Our motivation is that we want \cite[Proposition 3.10]{HerschendOsamu2011quiverwithpotential} to hold in the species case, i.e. that every cut for a self-injective species with potential is algebraic. The minimality condition that they use in their definition is awkward to use in the species case since minimality depends on the $D$-$D$-bimodule structure of $M$.
	\end{enumerate}
\end{myrem}

\begin{myprop}\label{proposition - every cut is alg}
	If $(S, W)$ is a self-injective species with potential, then every cut $C\subseteq Q$ is algebraic. Moreover, every arrow in $Q$ appears at least once in $Q_2$.
\end{myprop}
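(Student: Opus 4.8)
The plan is to derive both assertions from the bimodule (Calabi--Yau) resolution of $\mathcal{P}(S, W)$, using self-injectivity to guarantee its exactness. Write $\mathcal{P} = \mathcal{P}(S, W)$; this is finite dimensional because $(S, W)$ is self-injective. By Proposition~\ref{proposition - degree zero of jacobian} the truncated algebra $\mathcal{P}(S, W, C)$ equals the degree $0$ part $\mathcal{P}_0$ of $\mathcal{P}$ with respect to the grading $g_C$, so $\dim_\K \mathcal{P}(S, W, C)\le \dim_\K \mathcal{P} < \infty$. This settles the finiteness condition, and it remains to prove exactness of \eqref{eq - alg cut} and that $Q$ is covered.

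The key tool is the species analogue of the Ginzburg bimodule complex of $\mathcal{P}$,
\begin{equation*}
	\mathcal{P}\otimes_D \mathcal{P} \xrightarrow{d_3} \mathcal{P}\otimes_D M^* \otimes_D \mathcal{P} \xrightarrow{d_2} \mathcal{P}\otimes_D M \otimes_D \mathcal{P} \xrightarrow{d_1} \mathcal{P}\otimes_D \mathcal{P} \xrightarrow{\mu}\mathcal{P}\to 0,
\end{equation*}
whose differentials are assembled from the Casimir elements of Section~\ref{Section - prel} together with the derivatives of $W$: here $\mu$ is multiplication, $d_1$ is the commutator $x\otimes a\otimes y\mapsto xa\otimes y - x\otimes ay$, the map $d_2$ is built from the second cyclic derivatives of $W$, and $d_3$ from the Casimir element of $M\otimes_D M^*$ associated to $\b$. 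Since $\mathcal{P}$ is finite dimensional self-injective and arises as the degree $0$ cohomology of the associated Ginzburg dg algebra, this complex is a projective bimodule resolution of $\mathcal{P}$, i.e. it is exact. I would record this exactness as a separate lemma, translating the quiver-case arguments of \cite{Keller2011CYcompletions} and \cite[Proposition 3.9]{HerschendOsamu2011quiverwithpotential} into the species setting through the $\b$-formalism of \cite{Nquefack2012PotentialsJacobian}; this is the step I expect to be the main obstacle, since it requires the full Calabi--Yau symmetry together with careful bookkeeping of the $D$-$D$-bimodule structure via the Casimir elements $\underline{\alpha}$, $\overline{\alpha}$.

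Granting exactness, the sequence \eqref{eq - alg cut} is obtained as follows. Tensoring with the simple right module $S_i$ produces the minimal projective resolution of the left module $S_i$ over $\mathcal{P}$, which is graded by $g_C$; because $\mathcal{P}$ is self-injective, this resolution is three-periodic up to the degree shift $\deg W = 1$, exactly the shift governing the passage from $\mathcal{P}(S,W,C)$ to its higher preprojective algebra in \cite{IO13Stablecateofhigherpreproj}. Reading off, in each homological degree, the $g_C$-degree $0$ generators yields a resolution of $S_i$ over $\mathcal{P}_0 = \mathcal{P}(S, W, C)$: the $d_1$-term contributes $\bigoplus_{\alpha\notin C,\, s(\alpha)=i}P_{t(\alpha)}^{|\overline{\alpha}|}$ with map $(\overline{\alpha})_\alpha$ given by right multiplication by arrows (only the $g_C$-degree $0$ arrows survive), and the $d_2$-term contributes $\bigoplus_{\beta\in C,\, t(\beta)=i}P_{s(\beta)}^{|\underline{\beta}|}$ via the relations $\partial_{\overline{\beta^*},\,\underline{\alpha^*}}W$ (only the $g_C$-degree $0$ relations, i.e. those indexed by cut arrows, survive). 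Crucially, the third syzygy generator sits in $g_C$-degree $1$ and therefore contributes nothing in degree $0$, which forces the resolution to terminate at length $2$; in particular $\gldim \mathcal{P}(S,W,C)\le 2$ and the left-hand map of \eqref{eq - alg cut} is injective. This is precisely the sequence in the definition of an algebraic cut.

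Finally, coverage follows from self-injectivity. If some arrow $\alpha$ lay in no cycle of $Q_2 = \supp W$, then every cyclic derivative $\partial_\xi W$ would be free of $\alpha$, so $\alpha$ would occur in no Jacobian relation; the Nakayama symmetry $\Omega^3 S_i\cong S_{\nu i}$ of the self-injective algebra $\mathcal{P}$ then yields a contradiction, since it forces every arrow to appear in a relation and hence in $W$. Concretely, whenever $\alpha$ happens to lie in some cut $C$ one sees this directly from \eqref{eq - alg cut}: taking $i = t(\alpha)$, the second derivatives $\partial_{\overline{\alpha^*},\,\underline{\gamma^*}}W$ all vanish, so the summand $P_{s(\alpha)}^{|\underline{\alpha}|}$ maps to zero, and injectivity of the left-hand map forces $M_\alpha = 0$, contradicting $\alpha\in Q_1$. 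The general case is the cut-free translation of this argument, phrased through the minimal projective resolution of $S_i$ over $\mathcal{P}$ established above.
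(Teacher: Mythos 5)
Your high-level strategy is the same as the paper's: obtain an exact sequence of graded $\mathcal{P}(S,W)$-modules ending in $S_i$, grade it by $g_C$, and pass to the degree $0$ part, where Proposition~\ref{proposition - degree zero of jacobian} identifies the result with \eqref{eq - alg cut}; injectivity of the left-hand map comes from the fact that the next syzygy term sits in degree $1$ (in the paper, $\tilde{P}_i(-1)$ has zero degree-$0$ part), and coverage comes from the fact that a radical map $\tilde{P}_i\to \tilde{P}_{s(\beta)}^{|\underline{\beta}|}$ cannot be surjective when the second-derivative components through $\beta$ vanish. All of that matches the paper's proof.

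However, there is a genuine gap at the single most important step: the exactness input. You derive everything from the exactness of a species analogue of the Ginzburg bimodule resolution of $\mathcal{P}(S,W)$, which you defer to a separate, unproven lemma and yourself flag as ``the main obstacle.'' This is not a routine translation: the Calabi--Yau/completion machinery of \cite{Keller2011CYcompletions} and \cite[Proposition 3.9]{HerschendOsamu2011quiverwithpotential} is only available when $T(S)=\K Q$, and the paper explicitly treats its species analogue as open — see Remark~\ref{remark - jacobian as prerojecive} and the Question following it (whether every algebraic cut is preprojective is left as an expectation precisely because Keller's theorem is not available here). So your argument, as written, rests on a statement that is not established in this setting and is arguably harder than the proposition being proved. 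The paper sidesteps this entirely: it does not use any bimodule resolution, but instead quotes \cite[Proposition 5.2]{soderberg2024mutation}, which directly asserts, for self-injective $(S,W)$, the exactness of the one-sided complex \eqref{eq - alg cut proof} of left $\mathcal{P}(S,W)$-modules. With that citation substituted for your unproven lemma, the remainder of your argument (degree-$0$ truncation, injectivity, and the coverage argument — your ``cut-free translation,'' which is exactly what the paper does, rather than the vaguer appeal to $\Omega^3 S_i\cong S_{\nu i}$ or the cut-dependent version) closes correctly.
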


\begin{proof}
	Let $\tilde{\Lambda} = \mathcal{P}(S, W)$. Since $(S, W)$ is self-injective, the sequence
	\begin{equation}\label{eq - alg cut proof}
		\tilde{P}_i\xrightarrow{(\underline{\beta})_\beta} \bigoplus_{\substack{\beta\in Q_1 \\ t(\beta) = i}}\tilde{P}_{s(\beta)}^{|\underline{\beta}|} \xrightarrow{(\partial_{\overline{\beta^*}, \underline{\alpha^*}}W)_{\alpha, \beta}} \bigoplus_{\substack{\alpha\in Q_1 \\ s(\alpha) = i}}\tilde{P}_{t(\alpha)}^{|\overline{\alpha}|}\xrightarrow{(\overline{\alpha})_\alpha} \tilde{P}_i\to S_i\to 0
	\end{equation}
	of $\tilde{\Lambda}$-modules is exact due to \cite[Proposition 5.2]{soderberg2024mutation}, where $\tilde{P}_i = \tilde{\Lambda}e_i$. By shifting degrees in \eqref{eq - alg cut proof} we get the exact sequence
	\begin{equation}\label{eq - alg cut proof degree}
		\begin{tikzcd}[column sep = 1.8cm]
			& \bigoplus_{\substack{\beta\in C \\ t(\beta) = i}}\tilde{P}_{s(\beta)}^{|\underline{\beta}|} \arrow[r, "(\partial_{\overline{\beta^*}, \underline{\alpha^*}}W)_{\alpha, \beta}"] & \bigoplus_{\substack{\alpha\not\in C \\ s(\alpha) = i}}\tilde{P}_{t(\alpha)}^{|\overline{\alpha}|} \arrow[r, "(\overline{\alpha})_\alpha"] & \tilde{P}_i \arrow[r] & 0 \\
			\tilde{P}_i(-1) \arrow[ru, "(\underline{\beta})_\beta"] \arrow[r, "(\underline{\delta})_\delta"] & \bigoplus_{\substack{\delta\not\in C \\ t(\delta) = i}}\tilde{P}_{s(\delta)}^{|\underline{\delta}|}(-1) \arrow[ru, "(\partial_{\overline{\delta^*}, \underline{\alpha^*}}W)_{\alpha, \delta}"] \arrow[r, "(\partial_{\overline{\delta^*}, \underline{\gamma^*}}W)_{\gamma, \delta}"] & \bigoplus_{\substack{\gamma\in C \\ s(\gamma) = i}}\tilde{P}_{t(\gamma)}^{|\overline{\gamma}|}(-1) \arrow[ru, "(\overline{\gamma})_\gamma"]
		\end{tikzcd}
	\end{equation}
	where the morphisms are of degree $0$. Now Proposition~\ref{proposition - degree zero of jacobian} implies that \eqref{eq - alg cut} is the degree $0$ part of \eqref{eq - alg cut proof degree}.

	For the second part note that if we assume that $\beta\in Q_1$ does not appear in $Q_2$, then it means that $\partial_{\overline{\beta^*}, \underline{\alpha^*} W} = 0$. This contradicts the exactness in \eqref{eq - alg cut proof} since the map $P_i \xrightarrow{\underline{\beta}}P_{s(\beta)}^{|\underline{\beta}|}$ is not surjective.
\end{proof}

\begin{myrem}\label{remark - jacobian as prerojecive}
	In the setting of Proposition~\ref{proposition - every cut is alg} we expect that $\mathcal{P}(S, W) = \Pi_3(\mathcal{P}(S, W, C))$. This is true for quivers with potentials, i.e. when $T(S) = \K Q$, by \cite{Keller2011CYcompletions}, see also \cite[Proposition 3.9]{HerschendOsamu2011quiverwithpotential}.
\end{myrem}

The following definition is motivated by Remark~\ref{remark - jacobian as prerojecive}.

\begin{mydef}
	Let $(S, W, C)$ be a species with potential and a cut $C$. We say that $C$ is a preprojective cut if $\gldim \mathcal{P}(S, W, C)\le 2$ and there is an isomorphism of complete graded algebras $\varphi: \widehat{\Pi}_3(\mathcal{P}(S, W, C)) \cong \mathcal{P}(S, W)$ with $\mathcal{P}(S, W)$ graded by $g_C$ such that $\varphi$ is the identity on the degree $0$ part.
\end{mydef}

\begin{myex}
	In Proposition~\ref{proposition - preprojective algebra of tensor species} we have that $\Lambda = T(S^1)\otimes_\K T(S^2)$ for acyclic species $S^i$. Let $S = S(S^1, S^2)$ then $\widehat{\Pi}_3(\Lambda) \cong \mathcal{P}(S, W)$. The cut $C_3$ in \eqref{eq - standard cuts} is a preprojective cut since $\mathcal{P}(S, W, C_3) \cong \Lambda$.

	By Proposition~\ref{proposition - morita equivalent k-species} there is a $\K$-species $\tilde{S}$ and a potential $\tilde{W}$ such that $T(S)\mathrm{-mod}\cong T(\tilde{S})\mathrm{-mod}$ and $\mathcal{P}(S, W)\mathrm{-mod}\cong \mathcal{P}(\tilde{S}, \tilde{W})\mathrm{-mod}$. By Remark~\ref{remark - morita equiv given by a idemp e} there is an idempotent $e\in \Lambda\subseteq \widehat{\Pi}_3(\Lambda)$ such that $e\widehat{\Pi}_3(\Lambda)e \cong \mathcal{P}(\tilde{S}, \tilde{W})$. Let $\tilde{C}_3$ be the corresponding cut induced from $C_3$ for $(\tilde{S}, \tilde{W})$. The cut $\tilde{C}_3$ induces a grading on $e\widehat{\Pi}_3(\Lambda)e$ such that 
	\begin{equation*}
		\mathcal{P}(\tilde{S}, \tilde{W}, \tilde{C}_3) = (e\widehat{\Pi}_3(\Lambda)e)_0 =: \Lambda'.
	\end{equation*}
	It follows that $\widehat{\Pi}_3(\Lambda') = \mathcal{P}(\tilde{S}, \tilde{W})$ is graded by $\tilde{C}_3$, so $\tilde{C}_3$ is also a preprojective cut.
\end{myex}

\begin{myquestion}
	Is every algebraic cut a preprojective cut? This is true in the setting of \cite{HerschendOsamu2011quiverwithpotential} due to \cite{Keller2011CYcompletions}, see also \cite{HerschendOsamu2011quiverwithpotential}. We expect that it is also true in our setting.
\end{myquestion}

\section{Preprojective Algebra, 2-APR tilts and Cut-Mutation}\label{Section - Preproj, 2apr and cutmutation}
In this section we first recall the definition and properties of $2$-APR tilts. Then we show that under certain conditions $2$-APR tilting of $\mathcal{P}(S, W, C)$ corresponds to cut-mutation on $C$.

\subsection{2-APR Tilting}
\begin{mydef}\cite[Definition 3.1]{IO11nRFalgandnAPRtilt}\label{definition - d-apr tilting module}
	Let $\Lambda$ be a basic $\K$-algebra and $P$ be a simple projective module such that $\Hom_\Lambda(\DD\Lambda, P) = \mathrm{Ext}_\Lambda^1(\DD\Lambda, P) = 0$. We decompose $\Lambda = P\oplus P'$ and call
	\begin{equation*}
		T = \tau_2^-(P)\oplus P'
	\end{equation*}
	the weak $2$-APR tilting $\Lambda$-module associated with $P$. Moreover, if $\mathrm{idim}(P) = 2$, then $T$ is a $2$-APR tilting $\Lambda$-module and $\mathrm{End}_\Lambda(T)^{op}$ is called a $2$-APR tilt of $\Lambda$.
\end{mydef}

\begin{myprop}\cite[Proposition 3.6]{IO11nRFalgandnAPRtilt}
	If $\mathrm{gl.dim}(\Lambda) = 2$ and $T$ is a $2$-APR tilting $\Lambda$-module, then $\mathrm{gl.dim}(\mathrm{End}_\Lambda(T))^{op} = 2$.
\end{myprop}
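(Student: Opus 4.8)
The plan is to treat $T=\tau_2^-(P)\oplus P'$ as a genuine tilting module and then sharpen the standard global-dimension estimate using the specific geometry of the $2$-APR situation. First I would record that $T$ is a tilting $\Lambda$-module with $\mathrm{pd}_\Lambda T\le 2$: this is precisely where the hypotheses $\Hom_\Lambda(\DD\Lambda,P)=\mathrm{Ext}^1_\Lambda(\DD\Lambda,P)=0$ and $\mathrm{idim}(P)=2$ enter, guaranteeing that $\tau_2^-(P)$ is concentrated in degree $0$ (so that it is an honest module) and that $T$ satisfies the tilting axioms. By tilting theory this produces a triangle equivalence $\mathrm{RHom}_\Lambda(T,-)\colon \mathcal{D}^b(\Lambda\mathrm{-mod})\xrightarrow{\sim}\mathcal{D}^b(\Gamma\mathrm{-mod})$ with $\Gamma:=\End_\Lambda(T)^{\mathrm{op}}$, so that $\gldim\Gamma<\infty$, together with the crude general bound $\gldim\Gamma\le \gldim\Lambda+\mathrm{pd}_\Lambda T\le 4$.

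Since $\gldim\Gamma=\sup_S \mathrm{pd}_\Gamma S$ taken over the simple $\Gamma$-modules $S$, I would then compute these projective dimensions directly. The indecomposable projective $\Gamma$-modules are the $\Hom_\Lambda(T,T_i)$ for $T_i$ the indecomposable summands of $T$, and because $T$ is obtained from $\Lambda$ by replacing only the single simple-projective summand $P$ by $X:=\tau_2^-(P)$, the simples split into the distinguished one at the reflected vertex and all the others. For the reflected simple I would use the canonical $2$-APR reflection sequence linking $P$, suitable summands of $P'$ and $X$ to read off a projective $\Gamma$-resolution of length exactly $2$; this already yields $\gldim\Gamma\ge 2$, so the entire task reduces to showing that no simple exceeds projective dimension $2$. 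For the remaining simples I would transport their minimal $\Lambda$-projective resolutions, which have length $\le 2$ since $\gldim\Lambda=2$, across the equivalence.

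The main obstacle is exactly this sharpening from $4$ down to $2$: one must control the cohomological amplitude of the images $\mathrm{RHom}_\Lambda(T,S)$ of the simple $\Lambda$-modules $S$, showing that each is a stalk complex (a shifted $\Gamma$-module) rather than a genuinely spread-out object in $\mathcal{D}^b(\Gamma\mathrm{-mod})$. It is here that the $2$-APR hypotheses, rather than the mere existence of a tilting module of projective dimension $2$, do the decisive work: the conditions $\Hom_\Lambda(\DD\Lambda,P)=\mathrm{Ext}^1_\Lambda(\DD\Lambda,P)=0$ and $\mathrm{idim}(P)=2$ force $P$ to behave like a source whose reflection introduces no higher cohomology, so that the generic contribution of $\mathrm{pd}_\Lambda T$ to the estimate is never actually realized. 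Once every simple $\Lambda$-module is shown to map to a stalk complex, reading off the corresponding projective $\Gamma$-resolutions gives $\mathrm{pd}_\Gamma S\le 2$ for all simple $\Gamma$-modules $S$, and hence $\gldim\Gamma=2$.
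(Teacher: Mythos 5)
First, a framing remark: the paper contains no proof of this statement at all; it is imported verbatim as \cite[Proposition 3.6]{IO11nRFalgandnAPRtilt}. So your proposal can only be measured against what is actually needed to make the argument work, and there it has a genuine gap. Your first steps are sound: $T$ is a tilting module with $\mathrm{pd}_\Lambda T\le 2$, so $F:=\mathrm{RHom}_\Lambda(T,-)$ is a derived equivalence onto $\mathcal{D}^b(\Gamma\mathrm{-mod})$ for $\Gamma=\End_\Lambda(T)^{op}$, giving $\gldim\Gamma\le 4$; and your treatment of the \emph{reflected} simple is fine (applying $\Hom_\Lambda(T,-)$ to the exact sequence $0\to P\to Q_1\to Q_2\to \tau_2^-(P)\to 0$ with $Q_i\in\add P'$ does yield a length-$2$ projective $\Gamma$-resolution). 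The gap concerns all the \emph{other} simple $\Gamma$-modules, and it is two-fold. (i) Transporting a minimal projective $\Lambda$-resolution across $F$ does not produce a projective $\Gamma$-resolution: under $F$ the projective $\Gamma$-modules correspond to $\add T$, not to $\add\Lambda$; indeed $F(\Lambda)=\mathrm{RHom}_\Lambda(T,\Lambda)$ is in general not even a module, since $\mathrm{Ext}^{>0}_\Lambda(T,\Lambda)\neq 0$ for a non-projective tilting module. (ii) More fundamentally, the simple $\Gamma$-modules are \emph{not} shifts of images $F(S)$ of simple $\Lambda$-modules --- the only one of this form is the reflected simple $F(P)[2]$. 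Hence bounding the amplitude or projective dimension of every $F(S)$ bounds nothing: $\gldim\Gamma$ is computed from $\mathrm{Ext}$-groups between simple $\Gamma$-modules, i.e.\ from Hom-groups between their \emph{preimages} in $\mathcal{D}^b(\Lambda\mathrm{-mod})$, and these preimages are in general not simple $\Lambda$-modules.

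That your final inference ("once every simple $\Lambda$-module maps to a stalk complex, $\mathrm{pd}_\Gamma S\le 2$ for all simple $\Gamma$-modules") is false, and not merely unjustified, is shown by a small example with $n=1$: let $\Lambda$ be the path algebra of $1\to 2\to 3$ over $\K$ and $T=P_1\oplus S_1\oplus S_3$. This is a tilting module with $\mathrm{pd}_\Lambda T\le 1=\gldim\Lambda$, and every simple $\Lambda$-module goes to a stalk complex: $F(S_1)$ and $F(S_3)$ are projective $\Gamma$-modules because $S_1,S_3\in\add T$, while $\Hom_\Lambda(T,S_2)=0$ and $\mathrm{Ext}^1_\Lambda(T,S_2)\cong\mathrm{Ext}^1_\Lambda(S_1,S_2)\cong\K$, so $F(S_2)$ is a shifted simple. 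Yet $\Gamma=\End_\Lambda(T)^{op}$ is given by a quiver $u\to v\to w$ with the length-two path equal to zero, so $\gldim\Gamma=2>1$. Thus the stalk property of the images of the $\Lambda$-simples cannot be the decisive input. A correct proof must instead control the \emph{preimages} of the $\Gamma$-simples under the equivalence: one shows that the reflected simple corresponds to $P[2]$ and that each remaining simple $\Gamma$-module corresponds to a complex concentrated in degree $0$, i.e.\ a $\Lambda$-module. Granting that, $\mathrm{Ext}^i_\Gamma$ between $\Gamma$-simples vanishes for $i>2$: between $\Lambda$-modules this follows from $\gldim\Lambda=2$; maps out of $P[2]$ vanish in positive degrees because $P$ is projective; and maps into $P[2]$ of degree $i$ are $\mathrm{Ext}^{i+2}_\Lambda(-,P)=0$. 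It is precisely in proving that the unreflected $\Gamma$-simples have preimages concentrated in degree $0$ that the $2$-APR hypotheses do their real work, and this is the substance of the proof in \cite{IO11nRFalgandnAPRtilt}; your outline never engages with these preimages.
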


Note that if $\Lambda$ is $2$-representation finite then any simple projective non-injective $\Lambda$-module $P$ admits a $2$-APR tilt of $\Lambda$.

\begin{mycor}\cite[Corollary 4.3]{IO11nRFalgandnAPRtilt}
	Iterated $2$-APR tilts of a $2$-representation finite $\K$-algebra are $2$-representation finite.
\end{mycor}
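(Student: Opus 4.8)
The plan is to reduce to a single $2$-APR tilt and then induct. First I would observe that it suffices to prove that a single $2$-APR tilt $\Gamma = \End_\Lambda(T)^{op}$ of a $2$-representation finite algebra $\Lambda$ is again $2$-representation finite. Indeed, given an iterated sequence $\Lambda = \Lambda_0, \Lambda_1, \dots, \Lambda_n$ of $2$-APR tilts, the Note preceding the corollary guarantees that each $2$-representation finite $\Lambda_i$ carries a simple projective non-injective module at which the tilt is defined, so once the single-step statement is known, induction on $n$ closes the argument.

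For the single step, recall that a $2$-APR tilting module $T = \tau_2^-(P)\oplus P'$ is in particular a classical (finite projective dimension) tilting module, so the total derived functor $\mathbf{R}\Hom_\Lambda(T, -)$ induces a triangle equivalence $F: \mathcal{D}^b(\Lambda\mathrm{-mod}) \xrightarrow{\sim} \mathcal{D}^b(\Gamma\mathrm{-mod})$. The preceding Proposition gives $\gldim \Gamma = 2$, so $\Gamma$ again admits a Serre functor $\nu_\Gamma$ and a Nakayama functor $\nu_{2, \Gamma} = \nu_\Gamma\circ[-2]$. Since any triangle equivalence commutes with the Serre functor, $F$ intertwines $\nu_{2, \Lambda}$ with $\nu_{2, \Gamma}$, and hence intertwines the higher Auslander--Reiten translations $\tau_2$ and $\tau_2^-$ on the two sides.

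The key step is to transport the $2$-cluster tilting structure across $F$. Let $M$ be a $2$-cluster tilting $\Lambda$-module. Because $2$-cluster tilting is characterised by the conditions $\mathrm{Ext}^1_\Lambda(M, X) = 0 = \mathrm{Ext}^1_\Lambda(X, M)$, which in the derived category read as vanishing of $\Hom_{\mathcal{D}^b(\Lambda\mathrm{-mod})}(M, X[1])$ and its dual, they are preserved by $F$. I would then show that $F(M)$ is concentrated in degree $0$, i.e.\ is an honest $\Gamma$-module $M'$: the only indecomposable summand of $M$ that $F$ could move out of degree $0$ is the one associated with the simple projective $P$, and the definition $T = \tau_2^-(P)\oplus P'$ is engineered precisely so that its image returns to degree $0$ under $F$. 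Once $M' := F(M) \in \Gamma\mathrm{-mod}$ is established, the transported $\mathrm{Ext}^1$-vanishing conditions exhibit $M'$ as a $2$-cluster tilting $\Gamma$-module, so $\Gamma$ is $2$-representation finite.

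The main obstacle is exactly this last transport: verifying that $F(M)$ lands in $\Gamma\mathrm{-mod}$ rather than merely in $\mathcal{D}^b(\Gamma\mathrm{-mod})$, and that the $2$-cluster tilting property survives in the strong two-sided form demanded by the definition. An alternative route that sidesteps the degree-concentration bookkeeping is to invoke the characterisation of \cite{IO13Stablecateofhigherpreproj} that $\Lambda$ is $2$-representation finite if and only if $\widehat{\Pi}_3(\Lambda)$ is self-injective: since $F$ preserves the Serre functor and the $2$-APR tilt merely regrades the higher preprojective algebra, one checks $\widehat{\Pi}_3(\Gamma)\cong \widehat{\Pi}_3(\Lambda)$ as ungraded algebras, whence self-injectivity of $\widehat{\Pi}_3(\Lambda)$ forces that of $\widehat{\Pi}_3(\Gamma)$ and $2$-representation finiteness of $\Gamma$. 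I would treat the derived-equivalence argument as primary and regard the preprojective characterisation as the conceptual shortcut.
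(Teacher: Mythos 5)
The statement you are proving is not proved in the paper at all: it is imported verbatim from \cite[Corollary 4.3]{IO11nRFalgandnAPRtilt}, so your attempt has to be measured against the proof given there and against the machinery this paper builds in Section~\ref{Section - Preproj, 2apr and cutmutation}. Your primary argument has a genuine gap, and it sits exactly at the step you flag as ``the main obstacle'': the claim that $F(M)$ is concentrated in degree $0$ is false, for \emph{every} $2$-APR tilt. The summand of $M$ that misbehaves is $P$ itself, which lies in $\add M$ (since $\mathrm{Ext}^1_\Lambda(\Lambda, M)=0$ and the cluster tilting condition is two-sided, $\Lambda\in\add M$). Write $P=\Lambda e$. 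The paper shows in Section~\ref{Section - Preproj, 2apr and cutmutation} that under the $2$-APR hypotheses $\tau_2^-(P)\cong \nu_2^{-1}(P)=\nu^{-1}(P)[2]$ in $\mathcal{D}^b(\Lambda\mathrm{-mod})$. Since $\tau_2^-(P)$ is a direct summand of $T$, we get
\begin{equation*}
	H^2\bigl(\mathbf{R}\Hom_\Lambda(T, P)\bigr) \supseteq \mathrm{Ext}^2_\Lambda\bigl(\tau_2^-(P), P\bigr) \cong \Hom_{\mathcal{D}^b(\Lambda\mathrm{-mod})}\bigl(\nu^{-1}(P)[2], P[2]\bigr) \cong \Hom_\Lambda\bigl(P, \nu(P)\bigr) \cong \DD\bigl(e\Lambda e\bigr) \neq 0,
\end{equation*}
where the middle isomorphism applies the autoequivalence $\nu$ and uses that $P$ and $\nu(P)=\DD(e\Lambda)$ are both modules. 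So $F(P)$ does not ``return to degree $0$''; in fact $F(P)\cong \nu_2\bigl(F(\tau_2^-(P))\bigr)$ is an indecomposable injective $\Gamma$-module shifted into cohomological degree $2$. (Smallest example: $\Lambda=\K Q/(\beta\alpha)$ with $Q: 1\to 2\to 3$, $M=\Lambda\oplus S_1$, $P=P_3$, $T=S_1\oplus P_1\oplus P_2$; here $\mathrm{Ext}^2_\Lambda(S_1,P_3)\cong\K$.) Consequently $F(M)$ is never a $\Gamma$-module when a $2$-APR tilt exists, and the proposed transport of $M$ collapses: the image of the $\Lambda$-cluster tilting module is simply not the $\Gamma$-cluster tilting module; one summand must be traded for a shift.

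This failure is precisely why the actual proof in \cite{IO11nRFalgandnAPRtilt} transports, not the module $M$, but the $\nu_2$-stable subcategory $\mathcal{U}=\add\{\nu_2^{i}\Lambda \mid i\in\Z\}\subseteq\mathcal{D}^b(\Lambda\mathrm{-mod})$: one has $T\in\mathcal{U}$, hence $F(\mathcal{U})=\add\{\nu_2^{i}\Gamma\mid i\in\Z\}$, and $2$-cluster-tiltingness of this subcategory of the derived category is what the equivalence preserves; one concludes with the characterisation that a finite dimensional algebra $\Gamma$ with $\gldim\Gamma\le 2$ is $2$-representation finite if and only if $\add\{\nu_2^{i}\Gamma\}$ is $2$-cluster tilting in $\mathcal{D}^b(\Gamma\mathrm{-mod})$. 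No object-by-object degree claim is ever made. Your ``alternative route,'' by contrast, is sound in outline and is in substance what this paper itself runs on: Proposition~\ref{proposition - degree shift for preprojective} proves that $\widehat{\Pi}_3(\Gamma)$ is the same complete algebra as $\widehat{\Pi}_3(\Lambda)$ with only the grading changed --- but note this is an explicit idempotent computation built on Happel's theorem, not a formal consequence of ``$F$ commutes with the Serre functor'' --- and \cite[Corollary 3.7]{IO13Stablecateofhigherpreproj} then converts self-injectivity of the $3$-preprojective algebra into $2$-representation finiteness of $\Gamma$ (using $\gldim\Gamma\le 2$ from the proposition quoted just before the corollary). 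That combination is exactly how Theorem~\ref{theorem - jacobian 2apr tils of each other} is proved. So you should discard the primary argument, promote the fallback, and supply the regrading isomorphism as its actual content.
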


\subsection{Preprojective Algebra and 2-APR tilts}
Let $\Gamma = \Gamma_\bullet = \prod_{i\ge 0} \Gamma_i$ be a complete graded $\K$-algebra such that $\Gamma_0 = \Lambda$ is a basic finite dimensional algebra with $\gldim(\Lambda) = 2$ and $\widehat{\Pi}_3(\Lambda) \cong \Gamma_\bullet$, i.e. $\Gamma_i \cong \Hom_{\mathcal{D}^b(\Lambda)}(\Lambda, \nu_2^{-i}(\Lambda))$. Let $e\in \Lambda$ be an idempotent such that $\Lambda e$ is simple and projective. Assume that $T = \tau_2^-(\Lambda e)\oplus \Lambda(1- e)$ is a $2$-APR tilting module. Let us first show that $\tau_2^-(\Lambda e)\cong \nu_2^{-1}(\Lambda e)$ in $\mathcal{D}^b(\Lambda)$. Let
\begin{equation*}
	0\to \Lambda e\to I_0\to I_1\to I_2 \to 0
\end{equation*}
be an injective resolution of $\Lambda e$. Then
\begin{equation*}
	\DD(I_2)\to \DD(I_{1}) \to \DD(I_0)\to \DD(\Lambda e)\to 0
\end{equation*}
is a projective resolution of $\DD(\Lambda e)$, where $\DD = \Hom_\K(-, \K)$. Now applying $\Hom_{\Lambda^{op}}(-, \Lambda)$ yields the complex
\begin{equation*}
	0\to \nu^{-1}(I_0)\to \nu^{-1}(I_1)\to \nu^{-1}(I_2)\to 0
\end{equation*}
where $\tau_2^-(\Lambda e)$ is given as the homology at $\nu^{-1}(I_2)$ in the above complex. By definition of $2$-APR tilting we have that $\mathrm{Ext}_\Lambda^i(\DD\Lambda, \Lambda e) = 0$ for $0\le i\le 1$. Hence we only have non-zero homology at $\nu^{-1}(I_2)$ and therefore $\nu_2^{-1}(\Lambda e)\cong \tau_2^-(\Lambda e)$.

Now let $\Lambda' = \mathrm{End}_\Lambda(T)^{op} = \mathrm{End}_{\mathcal{D}^b(\Lambda)}(\nu_2^{-1}(\Lambda e)\oplus \Lambda(1-e))^{op}$. Then by \cite[Proposition 3.6]{IO11nRFalgandnAPRtilt} $\gldim(\Lambda') = 2$. Let us now study $\widehat{\Pi}_3(\Lambda')$. First we write
\begin{equation*}
	\Gamma = \begin{bmatrix}
		e\Gamma e & e\Gamma (1-e) \\
		(1- e)\Gamma e & (1- e)\Gamma (1-e)
	\end{bmatrix}, \qquad \Gamma_i = \begin{bmatrix}
		e\Gamma_i e & e\Gamma_i (1-e) \\
		(1- e)\Gamma_i e & (1- e)\Gamma_i (1-e)
	\end{bmatrix}.
\end{equation*}
We define a new grading
\begin{equation*}
	\Gamma^i = \begin{bmatrix}
		e\Gamma_i e & e\Gamma_{i-1} (1-e) \\
		(1- e)\Gamma_{i+1} e & (1- e)\Gamma_i (1-e)
	\end{bmatrix}.
\end{equation*}
Note that since $\Lambda e$ is a simple projective we have that $(1- e)\Gamma_0 e = 0$, which implies that $\widehat{\Pi}_3(\Lambda)\cong \Gamma^\bullet = \prod_{i\ge 0}\Gamma^i$.

\begin{myprop}\label{proposition - degree shift for preprojective}
	There is an isomorphism $\widehat{\Pi}_3(\Lambda')\cong \Gamma^\bullet$ as complete graded algebras. In particular,
	\begin{equation*}
		\Lambda' \cong \Gamma^0 = \begin{bmatrix}
			e\Gamma_0 e & 0 \\
			(1- e)\Gamma_1 e & (1-e)\Gamma_0 (1-e)
		\end{bmatrix}.
	\end{equation*}
\end{myprop}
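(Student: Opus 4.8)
The plan is to prove the isomorphism $\widehat{\Pi}_3(\Lambda') \cong \Gamma^\bullet$ by computing the higher preprojective algebra of $\Lambda'$ directly from its definition and matching each graded piece with the entries of $\Gamma^i$. Recall that $\Lambda' = \mathrm{End}_{\mathcal{D}^b(\Lambda)}(T)^{op}$ where $T = \nu_2^{-1}(\Lambda e)\oplus \Lambda(1-e)$, and that by the preceding discussion $\gldim(\Lambda') = 2$, so the algebra $\widehat{\Pi}_3(\Lambda')$ is well-defined and given by $\widehat{\Pi}_3(\Lambda') = \prod_{i\ge 0}\Hom_{\mathcal{D}^b(\Lambda')}(\Lambda', \nu_2^{-i}(\Lambda'))$ via the derived description \eqref{Remark - definition of derived preprojective algebra}.

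\textbf{Key steps.} First I would use that a $2$-APR tilt is in particular a tilting module, so the derived functor $R\Hom_\Lambda(T, -): \mathcal{D}^b(\Lambda)\to \mathcal{D}^b(\Lambda')$ is a triangle equivalence carrying $T$ to $\Lambda'$. Since $\nu_2$ is defined intrinsically on the derived category (it is the composite of the Serre functor data $\nu = \DD\Lambda\otimes^{\mathbb{L}}_\Lambda -$ with the shift $[-2]$), it commutes with any triangle equivalence of the relevant derived categories. Hence for each $i$ there is an isomorphism
\begin{equation*}
	\Hom_{\mathcal{D}^b(\Lambda')}(\Lambda', \nu_2^{-i}\Lambda') \cong \Hom_{\mathcal{D}^b(\Lambda)}(T, \nu_2^{-i}T).
\end{equation*}
The plan is then to expand $T = \nu_2^{-1}(\Lambda e)\oplus \Lambda(1-e)$ into the two summands and compute the four matrix blocks of $\Hom_{\mathcal{D}^b(\Lambda)}(T, \nu_2^{-i}T)$. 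Using that $\nu_2^{-1}$ is an autoequivalence, the block $\Hom(\nu_2^{-1}\Lambda e, \nu_2^{-i}\nu_2^{-1}\Lambda e) \cong \Hom(\Lambda e, \nu_2^{-i}\Lambda e) = e\Gamma_i e$, while the off-diagonal and remaining blocks produce exactly the shifted pieces $e\Gamma_{i-1}(1-e)$, $(1-e)\Gamma_{i+1}e$, and $(1-e)\Gamma_i(1-e)$, which is precisely the definition of $\Gamma^i$. I would verify that this identification is multiplicative, so that the isomorphism is one of complete graded algebras and not merely of graded vector spaces; the multiplication on $\widehat{\Pi}_3$ is composition in the derived category, which the equivalence $R\Hom_\Lambda(T,-)$ preserves strictly. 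The statement about $\Lambda' \cong \Gamma^0$ then follows by taking $i=0$, where $(1-e)\Gamma_0 e = 0$ forces the upper-right block to vanish, giving the displayed lower-triangular matrix.

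\textbf{The main obstacle} will be handling the degree shifts carefully in the off-diagonal blocks and confirming that no higher cohomology intrudes. Concretely, the blocks $\Hom_{\mathcal{D}^b(\Lambda)}(\nu_2^{-1}\Lambda e, \nu_2^{-i}\Lambda(1-e))$ and $\Hom_{\mathcal{D}^b(\Lambda)}(\Lambda(1-e), \nu_2^{-i}\nu_2^{-1}\Lambda e)$ must be shown to compute $e\Gamma_{i-1}(1-e)$ and $(1-e)\Gamma_{i+1}e$ respectively, which requires applying $\nu_2$ once to one argument to normalize the shift and then invoking the vanishing $\Hom_{\mathcal{D}^b(\Lambda)}(\Lambda, \nu_2^{-j}\Lambda) = H^0(\nu_2^{-j}\Lambda) = \tau_2^{-j}\Lambda$ in nonnegative degrees $j\ge 0$, together with the established fact $\nu_2^{-1}(\Lambda e)\cong \tau_2^-(\Lambda e)$ which guarantees this complex is concentrated in degree zero. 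One must check that the shifted indices never force a negative $j$ contributing spurious terms, and this is where the simple-projective hypothesis on $\Lambda e$ (giving $(1-e)\Gamma_0 e = 0$) is essential to make the grading $\Gamma^\bullet$ genuinely nonnegative. Once the four blocks are matched degree by degree, assembling them into the matrix form of $\Gamma^i$ is routine.
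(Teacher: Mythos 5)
Your proposal is correct and follows essentially the same route as the paper's proof: invoke the tilting equivalence $\mathcal{D}^b(\Lambda')\xrightarrow{\sim}\mathcal{D}^b(\Lambda)$ sending $\Lambda'$ to $T$, transfer $\nu_2$ across it, expand $\Hom_{\mathcal{D}^b(\Lambda)}(T,\nu_2^{-i}T)$ into four blocks, and use that $\nu_2^{-1}$ is an autoequivalence to shift the indices into the entries of $\Gamma^i$. One tiny slip: the upper-right block of $\Gamma^0$ vanishes because $e\Gamma_{-1}(1-e)=0$ (non-negativity of the original grading), whereas $(1-e)\Gamma_0 e=0$ is what makes the new grading $\Gamma^\bullet$ non-negative, a distinction you in fact state correctly later in your write-up.
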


\begin{proof}
	Since $T$ is a tilting module we have an equivalence $\mathcal{D}^b(\Lambda')\xrightarrow{\sim}\mathcal{D}^b(\Lambda)$ by \cite[Theorem 1.6]{Happel1987}, that sends $\Lambda'$ to $T$. Thus
	\begin{equation*}
		\begin{aligned}
			\widehat{\Pi}_3(\Lambda') &= \prod_{i\ge 0} \Hom_{\mathcal{D}^b(\Lambda')}(\Lambda', \nu_2^{-i}(\Lambda')) = \\
			&= \prod_{i\ge 0} \Hom_{\mathcal{D}^b(\Lambda)}(T, \nu_2^{-i}(T)) = \\
			&= \prod_{i\ge 0} \Hom_{\mathcal{D}^b(\Lambda)}(\nu_2^{-1}(\Lambda e)\oplus \Lambda(1- e), \nu_2^{-i}(\nu_2^{-1}(\Lambda e)\oplus \Lambda(1- e))) = \\
			&= \prod_{i\ge 0} \Hom_{\mathcal{D}^b(\Lambda)}(\nu_2^{-1}(\Lambda e), \nu_2^{-i}(\nu_2^{-1}(\Lambda e))) \oplus \Hom_{\mathcal{D}^b(\Lambda)}(\nu_2^{-1}(\Lambda e), \nu_2^{-i}(\Lambda(1- e))) \\
			&\qquad \oplus \Hom_{\mathcal{D}^b(\Lambda)}(\Lambda(1- e), \nu_2^{-i}(\nu_2^{-1}(\Lambda e))) \oplus \Hom_{\mathcal{D}^b(\Lambda)}(\Lambda(1- e), \nu_2^{-i}(\Lambda(1- e))) \\
			&= \prod_{i\ge 0} \Hom_{\mathcal{D}^b(\Lambda)}(\Lambda e, \nu_2^{-i}(\Lambda e)) \oplus \Hom_{\mathcal{D}^b(\Lambda)}(\Lambda e, \nu_2^{-(i-1)}(\Lambda(1- e))) \\
			&\qquad \oplus \Hom_{\mathcal{D}^b(\Lambda)}(\Lambda(1- e), \nu_2^{-(i+1)}(\Lambda e)) \oplus \Hom_{\mathcal{D}^b(\Lambda)}(\Lambda(1- e), \nu_2^{-i}(\Lambda(1- e))) = \\
			&= \prod_{i\ge 0}\Gamma^i.
		\end{aligned}
	\end{equation*}
\end{proof}

\subsection{Cut-Mutation and 2-APR Tilts}
Recall the cut-mutation defined in Definition~\ref{definition - cut-mutation}. The following theorem relates cut-mutation and $2$-APR tilting for species with potentials.

\begin{mythm}\label{theorem - cut and apr correspondence}
	Let $(S, W)$ be a species with potential and let $C$ be a preprojective cut. If $k\in Q_0$ is a strict sink and gives a $2$-APR tilting module $T$, then $\mu_k^-(C)$ is a preprojective cut and
	\begin{equation}\label{eq - cut and apr equation}
		\mathrm{End}_{\mathcal{P}(S, W, C)}(T)^{op} \cong \mathcal{P}(S, W, \mu_k^-(C)).
	\end{equation}
\end{mythm}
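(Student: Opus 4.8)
The plan is to reduce the statement to Proposition~\ref{proposition - degree shift for preprojective} by taking $e=e_k$, and then to translate the homological regrading produced there into the combinatorial cut-mutation $\mu_k^-$. Write $\Lambda=\mathcal{P}(S,W,C)$ and $e=e_k$. Since $k$ is a strict sink, every arrow starting at $k$ lies in $C$ and is therefore deleted in $Q_C$; hence $\Lambda e_k$ has vanishing radical and is a simple projective $\Lambda$-module (and $\Lambda$ is basic, being the Jacobian algebra of a $\K$-species). The hypothesis that $k$ gives a $2$-APR tilting module says precisely that $P=\Lambda e_k$ satisfies the vanishing conditions of Definition~\ref{definition - d-apr tilting module} with $\mathrm{idim}(P)=2$, so $T=\tau_2^-(\Lambda e_k)\oplus \Lambda(1-e_k)$ is the associated $2$-APR tilting module and $\gldim\Lambda=2$. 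Because $C$ is a preprojective cut we may take $\Gamma=\mathcal{P}(S,W)$ graded by $g_C$, with $\Gamma_0=\Lambda$ and $\widehat{\Pi}_3(\Lambda)\cong\Gamma_\bullet$ via an isomorphism that is the identity in degree $0$. Thus every hypothesis of Proposition~\ref{proposition - degree shift for preprojective} is met.

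Applying that proposition yields $\Lambda':=\mathrm{End}_\Lambda(T)^{op}\cong\Gamma^0$ together with an isomorphism of complete graded algebras $\widehat{\Pi}_3(\Lambda')\cong\Gamma^\bullet$, where $\Gamma^\bullet$ is the regrading recorded by the matrices $\Gamma^i$. The decisive step is to identify $\Gamma^\bullet$ with $\mathcal{P}(S,W)$ graded by $g_{\mu_k^-(C)}$.

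To do so, let $f\colon Q_0\to\{0,1\}$ be the function with $f(k)=1$ and $f(i)=0$ for $i\neq k$. The matrix description of $\Gamma^i$ depends only on whether the source and target of an element equal $k$: the block $e\,\Gamma_{\bullet}(1-e)$ is shifted up by one, the block $(1-e)\,\Gamma_{\bullet}\,e$ is shifted down by one, and the diagonal blocks are unchanged, so an element $e_j\,\gamma\,e_i$ of $g_C$-degree $d$ (i.e.\ lying in $\Gamma_d$) is placed in $\Gamma^{d+f(j)-f(i)}$. Hence the $\Gamma^\bullet$-degree of a path $p$ is $g_C(p)+f(t(p))-f(s(p))$, which is additive under composition because $f(t(p))-f(s(p))$ telescopes. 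On the other hand, since $k$ is a strict sink, every arrow $\alpha$ with $s(\alpha)=k$ lies in $C$ and is removed by $\mu_k^-$, every arrow with $t(\alpha)=k$ lies outside $C$ and is added, and arrows not incident to $k$ are unchanged; a direct check of these three cases gives $g_{\mu_k^-(C)}(\alpha)=g_C(\alpha)+f(t(\alpha))-f(s(\alpha))$ for every arrow $\alpha$, and summing along a path (again telescoping) gives $g_{\mu_k^-(C)}(p)=g_C(p)+f(t(p))-f(s(p))$ for every path $p$. Therefore $\Gamma^\bullet$ and $g_{\mu_k^-(C)}$ define the same grading on $\mathcal{P}(S,W)$; since this regrading moves each homogeneous component only by a bounded amount it is compatible with the completion.

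Finally, $\mu_k^-(C)$ is a cut because $k$ is a strict sink (the remark following Definition~\ref{definition - cut-mutation}). Taking degree-$0$ parts and invoking Proposition~\ref{proposition - degree zero of jacobian} for the cut $\mu_k^-(C)$ gives $\Gamma^0=\mathcal{P}(S,W)_0=\mathcal{P}(S,W,\mu_k^-(C))$, which combined with $\Lambda'\cong\Gamma^0$ yields the isomorphism \eqref{eq - cut and apr equation}. Moreover $\widehat{\Pi}_3(\Lambda')\cong\Gamma^\bullet=\mathcal{P}(S,W)$ graded by $g_{\mu_k^-(C)}$, and by construction this isomorphism restricts to the identity in degree $0$; since $\Lambda'$ is a $2$-APR tilt of the global-dimension-$2$ algebra $\Lambda$ we have $\gldim\Lambda'=2$ by \cite[Proposition 3.6]{IO11nRFalgandnAPRtilt}. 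Hence $\mu_k^-(C)$ is a preprojective cut, completing the proof. I expect the main obstacle to be the grading comparison of the third paragraph: one must check that the purely homological shift of Proposition~\ref{proposition - degree shift for preprojective}, phrased through the idempotent $e_k$ and seeing only the endpoints of an element, really coincides arrow-by-arrow with the combinatorial mutation $\mu_k^-$, and that no completion subtleties intervene.
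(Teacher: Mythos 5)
Your proposal is correct and follows the same route as the paper's proof: both apply Proposition~\ref{proposition - degree shift for preprojective} to the simple projective $\Lambda e_k$ and then reduce everything to identifying the regraded algebra $\Gamma^\bullet$ with $\mathcal{P}(S,W)$ graded by $g_{\mu_k^-(C)}$. The only difference is cosmetic: where you verify this identification arrow-by-arrow via the telescoping coboundary of the indicator function $f$ of $k$, the paper performs the equivalent block computation with the idempotents $e_k$, $1-e_k$ and the bimodules $M_C$, $M_{\mu_k^-(C)}$; your closing remarks on $\mu_k^-(C)$ being a cut, on $\gldim\Lambda'=2$, and on the degree-zero identification via Proposition~\ref{proposition - degree zero of jacobian} match what the paper leaves implicit.
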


\begin{proof}
	Let $\Lambda = \mathcal{P}(S, W, C)$ and $\Gamma_\bullet = \mathcal{P}(S, W)_\bullet$ be graded by $C$. Since $C$ is preprojective we get $\Gamma_\bullet \cong \widehat{\Pi}_3(\Lambda)$ which restricts to the identity on $\Gamma_0 = \Lambda$. The $2$-APR tilting module is given as $T = \tau_2^-(\Lambda e_k)\oplus \Lambda(1- e_k)$. By Proposition~\ref{proposition - degree shift for preprojective} $\Gamma^\bullet \cong \widehat{\Pi}_3(\Lambda')$ where $\Lambda' = \mathrm{End}_{\mathcal{P}(S, W, C)}(T)^{op}$. We are done if we can prove that $\Gamma^\bullet = \mathcal{P}(S, W)^\bullet$ graded by $\mu_k^-(C)$. Since $k$ is a strict sink in $Q_C$, $k$ is a strict source in $Q_{\mu_k^-(C)}$. Hence
	\begin{equation}\label{eq - cut and apr eq 1}
		e_k\mathcal{P}(S, W)_ie_k = e_k\mathcal{P}(S, W)^ie_k
	\end{equation}
	and
	\begin{equation}\label{eq - cut and apr eq 2}
		(1-e_k)\mathcal{P}(S, W)_i(1- e_k) = (1-e_k)\mathcal{P}(S, W)^i(1- e_k).
	\end{equation}
	Let $M_C = \bigoplus_{\alpha\in C}M_\alpha$ and $M_{\mu_k^-(C)} = \bigoplus_{\alpha\in \mu_k^-(C)}M_\alpha$. Then $(1- e_k)\Gamma e_k = (1-e_k)\Gamma M_C e_k$ and $e_k \Gamma(1- e_k) = e_k M_{\mu_k^-(C)}\Gamma(1-e_k)$. Hence
	\begin{equation*}
		(1-e_k)\Gamma_{i+1}e_k = (1-e_k)\mathcal{P}(S, W)_iM_C e_k.
	\end{equation*}
	Note that $M_C e_k = (1-e_k)M_C e_k$ and thus using \eqref{eq - cut and apr eq 2} we have
	\begin{equation}\label{eq - cut and apr eq 3}
		\begin{aligned}
			(1-e_k)\Gamma_{i+1}e_k &= (1-e_k)\mathcal{P}(S, W)_i(1-e_k)M_C e_k = (1-e_k)\mathcal{P}(S, W)^i(1- e_k)M_C e_k = \\
			&= (1-e_k)\mathcal{P}(S, W)^i e_k.
		\end{aligned}
	\end{equation}
	Similarly
	\begin{equation}\label{eq - cut and apr eq 4}
		\begin{aligned}
			e_k \Gamma_{i-1}(1-e_k) &= e_k M_{\mu_k^-(C)} \mathcal{P}(S, W)_{i-1} (1-e_k) = e_k M_{\mu_k^-(C)}(1-e_k) \mathcal{P}(S, W)^{i-1}(1-e_k) = \\
			&= e_k \mathcal{P}(S, W)^{i}(1-e_k).
		\end{aligned}
	\end{equation}
	Combining \eqref{eq - cut and apr eq 1}, \eqref{eq - cut and apr eq 2}, \eqref{eq - cut and apr eq 3} and \eqref{eq - cut and apr eq 4} gives $\Gamma^\bullet = \mathcal{P}(S, W)^\bullet$.
\end{proof}

Now we have enough to state the main theorem of the article.

\begin{mythm}\label{theorem - jacobian 2apr tils of each other}
	Let $(S, W)$ be a self-injective simply connected species with potential with enough cuts. Assume that $(S, W)$ has a preprojective cut. Then all cuts $C$ are preprojective and the corresponding truncated Jacobian algebras $\mathcal{P}(S, W, C)$ are iterated $2$-APR tilts of each other. In particular, each $\mathcal{P}(S, W, C)$ is $2$-representation finite.
\end{mythm}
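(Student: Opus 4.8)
The plan is to combine the combinatorial transitivity of Section~\ref{section - quiver with cycles} with the correspondence of Theorem~\ref{theorem - cut and apr correspondence}, propagating $2$-representation finiteness and preprojectivity along a chain of cut-mutations. First I would record the structural input. By Proposition~\ref{proposition - every cut is alg} self-injectivity forces every arrow to occur in $Q_2$, so $Q$ is covered, and moreover every cut $C$ is algebraic, whence $\gldim \mathcal{P}(S,W,C)\le 2$ and $\dim_\K \mathcal{P}(S,W,C)<\infty$. Since $(S,W)$ is in addition simply connected and has enough cuts, Theorem~\ref{theorem - simply con implies trans} applies and the set of all cuts is transitive under successive cut-mutations.

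Next I would settle the base case. Let $C_0$ be the given preprojective cut. Then $\gldim \mathcal{P}(S,W,C_0)\le 2$ and $\widehat{\Pi}_3(\mathcal{P}(S,W,C_0))\cong \mathcal{P}(S,W)$, which is finite dimensional (potentials being finite) and self-injective by hypothesis, so its completion agrees with $\Pi_3(\mathcal{P}(S,W,C_0))$. By the Iyama--Oppermann characterisation, self-injectivity of the $3$-preprojective algebra is equivalent to $2$-representation finiteness, so $\mathcal{P}(S,W,C_0)$ is $2$-representation finite.

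The heart of the argument is an induction on the length of a cut-mutation sequence $C_0=D_0,D_1,\dots,D_n=C$ joining $C_0$ to an arbitrary cut $C$, with inductive hypothesis that $D_j$ is preprojective, $\mathcal{P}(S,W,D_j)$ is $2$-representation finite, and $\mathcal{P}(S,W,D_j)$ is an iterated $2$-APR tilt of $\mathcal{P}(S,W,C_0)$. For a step $D_{j+1}=\mu_k^-(D_j)$ at a strict sink $k$, the module $\mathcal{P}(S,W,D_j)e_k$ is simple projective, since $k$ has no outgoing arrow in $Q_{D_j}$. I claim it is non-injective: being a strict sink at which we mutate, $k$ is incident to an arrow, which by covering lies on a cycle of $Q_2$ passing through $k$; that cycle contains an arrow ending at $k$, and such an arrow does not belong to $D_j$, hence survives in $Q_{D_j}$, so $k$ is not a source of $Q_{D_j}$ and its simple is not injective. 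As $\mathcal{P}(S,W,D_j)$ is $2$-representation finite, this simple projective non-injective module admits a $2$-APR tilt $T$, so Theorem~\ref{theorem - cut and apr correspondence} yields that $D_{j+1}=\mu_k^-(D_j)$ is preprojective with $\mathcal{P}(S,W,D_{j+1})\cong \mathrm{End}_{\mathcal{P}(S,W,D_j)}(T)^{op}$, which is again $2$-representation finite by the invariance of $2$-representation finiteness under $2$-APR tilting. Transitivity then guarantees that every cut $C$ arises as some $D_n$, giving the conclusion.

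The main obstacle is that a mutation sequence produced by transitivity will in general also use source-mutations $\mu_k^+$, whereas Theorem~\ref{theorem - cut and apr correspondence} is stated only for strict sinks. For a step $D_{j+1}=\mu_k^+(D_j)$ at a strict source $k$ I would invoke the dual statement via $2$-APR cotilting: one passes to the opposite species with potential $(S^{op},W^{op})$, under which $C\mapsto C^{op}$, $\mathcal{P}(S,W,C)^{op}\cong \mathcal{P}(S^{op},W^{op},C^{op})$, a strict source becomes a strict sink and $\mu_k^+$ becomes $\mu_k^-$, so that Theorem~\ref{theorem - cut and apr correspondence} and Proposition~\ref{proposition - degree shift for preprojective} apply on the opposite side and transport back. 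One must check that $(S^{op},W^{op})$ inherits self-injectivity, covering, enough cuts, and simple connectedness (the last because $X_{Q^{op}}=X_Q$), that $\widehat{\Pi}_3$ and preprojectivity commute with $(-)^{op}$, and that being iterated $2$-APR tilts of each other is a symmetric relation, an inverse $2$-APR tilt being a $2$-APR cotilt. This opposite-algebra bookkeeping, together with the non-injectivity claim above, is the only genuinely delicate point; once it is in place the equivalence class of $C_0$ is closed under both $\mu_k^+$ and $\mu_k^-$, and transitivity completes the proof.
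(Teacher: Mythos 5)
Your proposal is correct and follows essentially the same route as the paper's proof: covering is extracted from Proposition~\ref{proposition - every cut is alg}, transitivity of cuts from the simply connected/enough cuts hypotheses via Theorem~\ref{theorem - qwc fully, covered enough implies trans} (equivalently Theorem~\ref{theorem - simply con implies trans}), and then preprojectivity and $2$-representation finiteness are propagated along the mutation chain using Theorem~\ref{theorem - cut and apr correspondence} together with the Iyama--Oppermann result that $2$-APR tilting preserves $2$-representation finiteness. The two delicate points you isolate --- checking that the simple projective at a strict sink is non-injective, and handling source-mutations $\mu_k^+$ by passing to the opposite species with potential, since Theorem~\ref{theorem - cut and apr correspondence} is stated only for strict sinks --- are exactly the details the paper's terse proof leaves implicit, so your treatment is a careful completion of the same argument rather than a different one.
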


\begin{proof}
	Let $C$ be a preprojective cut. Then $\mathcal{P}(S, W) \cong \widehat{\Pi}_3(\mathcal{P}(S, W, C))$ which is self-injective by assumption. Applying \cite[Corollary 3.7]{IO13Stablecateofhigherpreproj} gives that $\mathcal{P}(S, W, C)$ is a $2$-representation finite algebra. By Proposition~\ref{proposition - every cut is alg} every arrow in $Q$ appears at least once in $Q_2$ and thus $Q$ is covered. By applying Theorem~\ref{theorem - qwc fully, covered enough implies trans} we have that all cuts are transitive under successive cut-mutation. Thus using Theorem~\ref{theorem - cut and apr correspondence}, all truncated Jacobian algebras are iterated $2$-APR tilts of each other. By \cite[Corollary 4.3]{IO11nRFalgandnAPRtilt} $2$-APR tilting preserves $2$-representation finiteness, hence all truncated Jacobian algebras are $2$-representation finite.
\end{proof}

\begin{mycor}
	Let $S^1$ and $S^2$ be two $l$-homogeneous representation finite species such that $D^1_i\otimes_\K D^2_j$ is a division $\K$-algebra for all $i\in Q^1_0$ and $j\in Q^2_0$, then all truncated Jacobian algebras of $(S(S^1, S^2), W(S^1, S^2))$ are iterated $2$-APR tilts of each other.
\end{mycor}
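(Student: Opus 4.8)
The plan is to verify that $(S(S^1, S^2), W(S^1, S^2))$ satisfies every hypothesis of Theorem~\ref{theorem - jacobian 2apr tils of each other} and then simply invoke it. First I would record two preliminary observations. Since $S^1$ and $S^2$ are representation finite, their quivers $Q^1$ and $Q^2$ are orientations of Dynkin diagrams by Theorem~\ref{theorem - dlab ringel}, hence trees; in particular $S^1$ and $S^2$ are acyclic, so the constructions of the previous sections apply. Moreover, the hypothesis that $D_i^1 \otimes_\K D_j^2$ is a division $\K$-algebra for all $i, j$ ensures, by Proposition~\ref{proposition - product is division alg}, that $S(S^1, S^2)$ is a genuine $\K$-species; consequently every truncated Jacobian algebra $\mathcal{P}(S, W, C)$ is basic, so no passage to a Morita-equivalent species is required and the $2$-APR machinery of Section~\ref{Section - Preproj, 2apr and cutmutation} applies directly.

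Next I would establish self-injectivity. Because $S^1$ and $S^2$ are $l$-homogeneous representation finite species sharing the same index $l$, their tensor algebras $T(S^1)$ and $T(S^2)$ are $l$-homogeneous $1$-representation finite $\K$-algebras. Corollary~\ref{corollary - A x B is n_1 + n_2 RF} then shows that $\Lambda = T(S^1) \otimes_\K T(S^2)$ is $l$-homogeneous and $2$-representation finite, so by \cite{IO13Stablecateofhigherpreproj} its $3$-preprojective algebra is self-injective. By Proposition~\ref{proposition - preprojective algebra of tensor species} we have $\widehat{\Pi}_3(\Lambda) \cong \mathcal{P}(S(S^1, S^2), W(S^1, S^2))$, and therefore the species with potential is self-injective.

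It then remains to check the three combinatorial conditions. Since $Q^1$ and $Q^2$ are trees, Lemma~\ref{lemma - finite tress product trans}(1) identifies $X_{Q^1 \tilde{\otimes} Q^2}$ with $X_{Q^1} \times X_{Q^2}$, which is simply connected because both factors are; thus the associated quiver with cycles is simply connected. Lemma~\ref{lemma - Q^1 x Q^2 covered enough cuts} gives that it is covered and has enough cuts. Finally, the distinguished cut $C_3$ of \eqref{eq - standard cuts} is a preprojective cut, since $\mathcal{P}(S, W, C_3) \cong \Lambda$ and $\widehat{\Pi}_3(\Lambda) \cong \mathcal{P}(S, W)$, as recorded in the example following the definition of preprojective cut. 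With all hypotheses in hand, Theorem~\ref{theorem - jacobian 2apr tils of each other} yields that every cut is preprojective and that the truncated Jacobian algebras are iterated $2$-APR tilts of one another.

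I do not expect a genuine obstacle here: the content of the corollary is precisely that the tensor-product construction meets the general hypotheses already assembled. The only points that need care are the role of the division-algebra assumption, which is exactly what guarantees basicness of the $\mathcal{P}(S, W, C)$ and hence legitimises the use of $2$-APR tilting, and the requirement that both factors share the same homogeneity index $l$, which is indispensable for Corollary~\ref{corollary - A x B is n_1 + n_2 RF} to produce a $2$-representation finite tensor product.
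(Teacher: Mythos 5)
Your proposal is correct and follows essentially the same route as the paper: the paper's proof is a one-line combination of Lemma~\ref{lemma - finite tress product trans}, Proposition~\ref{proposition - product is division alg} and Theorem~\ref{theorem - jacobian 2apr tils of each other}, and your argument simply spells out the verifications that this combination leaves implicit. In particular, your explicit checks of self-injectivity (via Corollary~\ref{corollary - A x B is n_1 + n_2 RF} and Proposition~\ref{proposition - preprojective algebra of tensor species}), of the covered/enough-cuts/simply-connected conditions, of the preprojective cut $C_3$, and of basicness via the division-algebra hypothesis are exactly the intended content of the paper's terse proof.
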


\begin{proof}
	This follows by combining Lemma~\ref{lemma - finite tress product trans}, Proposition~\ref{proposition - product is division alg} and Theorem~\ref{theorem - jacobian 2apr tils of each other}.
\end{proof}

Similarly we have the following corollary by applying Proposition~\ref{proposition - F-conn morita equiv}.

\begin{mycor}\label{corollary - tensor product species morita transitive}
	Let $S^1$ and $S^2$ be two $l$-homogeneous representation finite species satisfying the conditions in Proposition~\ref{proposition - F-conn morita equiv} (1) or (2), then all truncated Jacobian algebras of $(\tilde{S}, \tilde{W})$ are iterated $2$-APR tilts of each other, where $(\tilde{S}, \tilde{W})$ is the Morita equivalent $\K$-species with potential given in Proposition~\ref{proposition - F-conn morita equiv}.
\end{mycor}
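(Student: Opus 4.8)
The plan is to reduce the statement to the main Theorem~\ref{theorem - jacobian 2apr tils of each other} applied to the Morita equivalent data $(\tilde{S}, \tilde{W})$ furnished by Proposition~\ref{proposition - F-conn morita equiv}, exactly as in the preceding corollary but with Proposition~\ref{proposition - F-conn morita equiv} playing the role of the tree argument. Concretely, I would verify the four hypotheses of that theorem for $(\tilde{S}, \tilde{W})$: that it is self-injective, simply connected, has enough cuts, and admits a preprojective cut. Three of these are almost immediate, and the only genuine work is the bookkeeping needed in case (2), where $\tilde{Q}$ is merely a disjoint union of simply connected components rather than connected.

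First I would establish self-injectivity. By hypothesis $S^1$ and $S^2$ are $l$-homogeneous representation finite, so each $T(S^k)$ is $l$-homogeneous $1$-representation finite; Corollary~\ref{corollary - A x B is n_1 + n_2 RF} with $d_1 = d_2 = 1$ then gives that $\Lambda = T(S^1)\otimes_\K T(S^2)$ is $l$-homogeneous and $2$-representation finite. By Proposition~\ref{proposition - preprojective algebra of tensor species} we have $\widehat{\Pi}_3(\Lambda)\cong \mathcal{P}(S(S^1,S^2), W(S^1,S^2))$, and since $\Lambda$ is $2$-representation finite its $3$-preprojective algebra is finite dimensional and self-injective by \cite[Corollary 3.7]{IO13Stablecateofhigherpreproj}. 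Hence $(S(S^1,S^2), W(S^1,S^2))$ is self-injective, and because $(\tilde{S}, \tilde{W})$ is Morita equivalent to it (Proposition~\ref{proposition - F-conn morita equiv}) and self-injectivity is a Morita invariant, $(\tilde{S}, \tilde{W})$ is self-injective as well.

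Next, Remark~\ref{remark - F = K} ensures that Proposition~\ref{proposition - F-conn morita equiv} applies to our representation finite species with $F = \K$, and it directly supplies the remaining combinatorial and topological inputs: $\tilde{Q}$ is covered, has enough cuts, and is simply connected in case (1) (resp.\ a disjoint union of simply connected components in case (2)). For the preprojective cut I would transport the standard cut $C_3$ of \eqref{eq - standard cuts}: by Proposition~\ref{proposition - preprojective algebra of tensor species} the cut $C_3$ satisfies $\mathcal{P}(S(S^1,S^2),W(S^1,S^2),C_3)\cong \Lambda$ while $\widehat{\Pi}_3(\Lambda)\cong \mathcal{P}(S(S^1,S^2),W(S^1,S^2))$ is graded by $g_{C_3}$, so $C_3$ is preprojective; passing through the idempotent $e$ of the Morita equivalence yields the corresponding cut $\tilde{C}_3$ on $\tilde{Q}$ with $\mathcal{P}(\tilde{S},\tilde{W},\tilde{C}_3)\cong e\widehat{\Pi}_3(\Lambda)e$ and $\widehat{\Pi}_3\big(\mathcal{P}(\tilde{S},\tilde{W},\tilde{C}_3)\big)\cong \mathcal{P}(\tilde{S},\tilde{W})$, so that $\tilde{C}_3$ is preprojective, as in the example following the definition of a preprojective cut. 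With all hypotheses in hand, in case (1) Theorem~\ref{theorem - jacobian 2apr tils of each other} applies verbatim and gives the conclusion.

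The one subtlety, and the step I expect to require the most care, is case (2), where the standing convention that quivers are connected is violated and Theorem~\ref{theorem - jacobian 2apr tils of each other} is not literally applicable to $\tilde{Q}$. The cleanest resolution is to apply the theorem componentwise: the algebra $T(\tilde{S})$ splits as a product $\bigoplus_{k=1}^n D_k'\otimes_\K \K(Q^1\tilde{\otimes}Q^2)$ indexed by the simply connected components of $\tilde{Q}$, the Jacobian algebra and its grading split accordingly, and a finite product of algebras is self-injective precisely when each factor is, so every component is itself a connected, self-injective, simply connected species with potential with enough cuts. Because cut-mutation at a strict source or sink is local to a single component and $\tilde{C}_3$ restricts to a preprojective cut on each component, Theorem~\ref{theorem - jacobian 2apr tils of each other} applies to each component separately; combining the resulting componentwise transitivity with the global transitivity already guaranteed by Proposition~\ref{proposition - F-conn morita equiv} shows that all cuts of $(\tilde{S},\tilde{W})$ are preprojective and that all truncated Jacobian algebras are iterated $2$-APR tilts of each other. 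Alternatively, one may observe that the proof of Theorem~\ref{theorem - jacobian 2apr tils of each other} only uses transitivity of cuts (supplied in both cases by Proposition~\ref{proposition - F-conn morita equiv}), the local cut-mutation/$2$-APR correspondence of Theorem~\ref{theorem - cut and apr correspondence}, and the existence of a single preprojective cut, none of which requires connectedness, so the argument carries over directly.
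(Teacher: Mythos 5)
Your proposal is correct and takes essentially the same route as the paper: the paper's proof (stated in one line, "by applying Proposition~\ref{proposition - F-conn morita equiv}", in analogy with the preceding corollary) is exactly your reduction, namely self-injectivity of $(\tilde{S},\tilde{W})$ via $2$-representation finiteness of $T(S^1)\otimes_\K T(S^2)$ and Morita invariance, the covered/enough-cuts/simply-connected properties and transitivity from Proposition~\ref{proposition - F-conn morita equiv}, the preprojective cut $\tilde{C}_3$ transported through the Morita idempotent as in the example following the definition of preprojective cut, and then Theorem~\ref{theorem - jacobian 2apr tils of each other}. Your careful componentwise treatment of case (2), where $\tilde{Q}$ is disconnected and the theorem's simple-connectedness hypothesis is not literally met, fills in a point the paper passes over silently and is consistent with its intended argument.
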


Consider the species in Figure~\ref{Figure - Dynkin Diagrams} with $F = \K$. They are all representation finite and Corollary~\ref{Corollary - l homogeneous species} determines which are $l$-homogeneous. Note that Corollary~\ref{corollary - tensor product species morita transitive} applies to any pair of them.

\begin{myex}
	Let us now compute an example where Theorem~\ref{theorem - jacobian 2apr tils of each other} can be applied. For this we use the same example constructed in Example~\ref{example - modify quiver}. Let $S$ be $\R$-species
	\begin{equation*}
		\C \xrightarrow{\C} \R.
	\end{equation*}
	Now $S$ is a $2$-homogeneous representation finite species, since $S$ is of type $B_2$, the algebra $T(S)\otimes_\R T(S)$ is a $2$-representation finite algebra (see \cite[Corollary 3.7]{soderberg2022preprojective} and \cite[Corollary 1.5]{herschend2011n}). According to the table \cite[Table 1]{soderberg2024mutation} the algebra $T(S)\otimes_\R T(S)$ is isomorphic to $\mathcal{P}(S', W, C)$, where $S'$ is the species
	\begin{equation*}
		\begin{tikzcd}
			\C_1 \arrow[rr, "\lsub{2}\C_1"] \arrow[dd, "\lsub{4}\C_1"] & & \C_2 \arrow[dl, swap, "\lsub{3}\C_2"] \\
			& \R_3 \arrow[ul, swap, "\lsub{1}\C_3"] \arrow[dr, swap, "\lsub{5}\C_3"] \\
			\C_4 \arrow[ur, swap, "\lsub{3}\C_4"] & & \C_5 \arrow[ll, "\lsub{4}\C_5"] \arrow[uu, "\lsub{2}\C_5"]
		\end{tikzcd}
	\end{equation*}
	over the quiver
	\begin{equation*}
		\begin{tikzcd}
			1 \arrow[rr, "\lsub{2}\alpha_1"] \arrow[dd, "\lsub{4}\alpha_1"] & & 2 \arrow[dl, swap, "\lsub{3}\alpha_2"] \\
			& 3 \arrow[ul, swap, "\lsub{1}\alpha_3"] \arrow[dr, swap, "\lsub{5}\alpha_3"] \\
			4 \arrow[ur, swap, "\lsub{3}\alpha_4"] & & 5 \arrow[ll, "\lsub{4}\alpha_5"] \arrow[uu, "\lsub{2}\alpha_5"]
		\end{tikzcd}
	\end{equation*}
	with the potential
	\begin{equation*}
		\begin{aligned}
			W =& \lsub{2}1_1\otimes \lsub{1}1^*_3\otimes \lsub{3}1_2 + \lsub{2}1_1\otimes \lsub{1}i^*_3\otimes \lsub{3}1_2 - \lsub{4}1_1\otimes \lsub{1}1^*_3\otimes \lsub{3}1_4 - \lsub{4}1_1\otimes \lsub{1}i^*_3\otimes \lsub{3}1_4 + \\
			&+ \lsub{2}1_5\otimes \lsub{5}1^*_3\otimes \lsub{3}1_2 + \lsub{2}1_5\otimes \lsub{5}i^*_3\otimes \lsub{3}1_2 - \lsub{4}1_5\otimes \lsub{5}1^*_3\otimes \lsub{3}1_4 + \lsub{4}1_5\otimes \lsub{5}i^*_3\otimes \lsub{3}1_4
		\end{aligned}
	\end{equation*}
	and the cut $C = \{\lsub{1}\alpha_3, \lsub{5}\alpha_3 \}$. By Proposition~\ref{proposition - F-conn morita equiv} $(S', W')$ is a simply connected species with potential, which is covered and has enough cuts. We display all cuts in Figure~\ref{figure - theorem 7.6 applied} and connected cuts via cut-mutation are noted with an edge.
	\begin{figure}[!ht]
	\begin{adjustbox}{width=\columnwidth,center}
		\begin{tikzcd}[bezier bounding box]
			& \Image{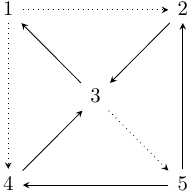} \ar[thick, -, dr] & & \Image{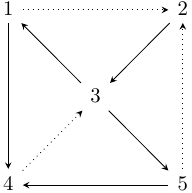} \ar[thick, -, dr] \\
			\Image{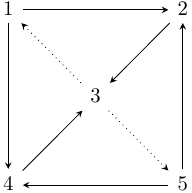} \ar[thick, -, dr] \ar[thick, -, ur] & & \Image{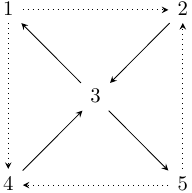} \ar[thick, -, dr] \ar[thick, -, ur] & & \Image{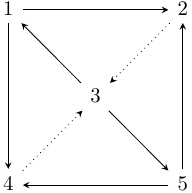} \ar[llll, thick, -, in=90, out=90, looseness=1.2] \\
			& \Image{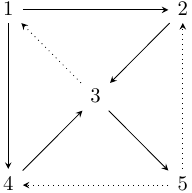} \ar[thick, -, ur] & & \Image{exthm76/4/4.pdf} \ar[thick, -, ur]
		\end{tikzcd}
	\end{adjustbox}
		\caption{Mutation lattice for $B_2\times B_2$}\label{figure - theorem 7.6 applied}
	\end{figure}
\end{myex}

\subsection{Tensor Products of Representation Finite Species}
In this section we compute more examples where Corollary~\ref{corollary - tensor product species morita transitive} is applied.

\begin{myex}\label{ex - example for intro}
	Let $S^1$ be the species $\R \xleftarrow{\R}\R \xrightarrow{\R}\R$ and $S^2$ be the species $\C\xrightarrow{\C}\R$. Following \cite[Example 6.11]{soderberg2024mutation} the species $S(S^1, S^2)$ and $W(S^1, S^2)$ are given as the following.
	\begin{equation*}
		\begin{tikzcd}[row sep = 0.3cm, column sep = 1cm]
			& \C \arrow[r, "\lsub{2}\C_1"] \arrow[dd, "\lsub{4}\C_1"] & \C \arrow[dd, "\lsub{5}\C_2"] & \C \arrow[l, "\lsub{2}\C_3"'] \arrow[dd, "\lsub{6}\C_3"] \\
			S(S^1, S^2): \\
			& \R \arrow[r, "\lsub{5}\R_4"] & \R \arrow[luu, "\lsub{1}\C_5"'] \arrow[ruu, "\lsub{3}\C_5"'] & \R \arrow[l, "\lsub{5}\R_6"']
		\end{tikzcd}
	\end{equation*}
	\begin{equation*}
		\begin{tikzcd}[row sep = 0.3cm, column sep = 1cm]
			& 1 \arrow[r] \arrow[dd] & 2 \arrow[dd] & 3 \arrow[l] \arrow[dd] \\ 
			Q^1\tilde\otimes Q^2: \\
			& 4 \arrow[r] & 5 \arrow[luu] \arrow[ruu] & 6 \arrow[l]
		\end{tikzcd}
	\end{equation*}
	\begin{equation}\label{eq - potential in example at end 1}
		\begin{aligned}
			W(S^1, S^2) =& \lsub{2}1_1\otimes \lsub{1}1_5\otimes \lsub{5}1_2 - \lsub{2}1_1\otimes \lsub{1}i_5\otimes \lsub{5}i_2 + \lsub{2}1_3\otimes \lsub{3}1_5\otimes \lsub{5}1_2 - \lsub{2}1_3\otimes \lsub{3}i_5\otimes \lsub{5}i_2 + \\
			&- \lsub{4}1_1\otimes \lsub{1}1_5\otimes \lsub{5}1_4 - \lsub{6}1_3\otimes \lsub{3}1_5\otimes \lsub{5}1_6.
		\end{aligned}
	\end{equation}
	Due to Proposition~\ref{proposition - preprojective algebra of tensor species}, $\Pi_3(\Lambda) = \mathcal{P}(S(S^1, S^2), W(S^1, S^2))$ where $\Lambda = T(S^1)\otimes_\R T(S^2)$. Since $S^1$ and $S^2$ are both $2$-homogeneous representation finite species, and they are of type $A_3$ and $B_2$ respectively, we can apply Corollary~\ref{corollary - tensor product species morita transitive}. All $2$-APR tilts are given in Figure~\ref{figure - Ex 1}, where connections are given by cut-mutation.
	\begin{figure}[!ht]
	\begin{adjustbox}{width=\columnwidth,center}
		\begin{tikzcd}[bezier bounding box]
			& & \Image{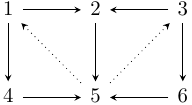} \ar[dll, -, thick] \ar[-, thick, d] \ar[-, thick, drr]  \\
			\Image{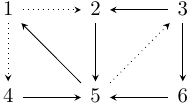} \ar[-, thick, d] \ar[-, thick, dr] & & \Image{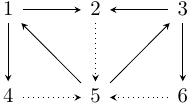} \ar[-, thick, dl] \ar[-, thick, d] \ar[-, thick, dr] & & \Image{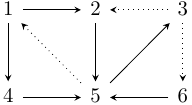} \ar[-, thick, d] \ar[-, thick, dl] \\
			\Image{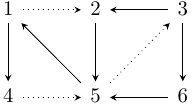} \ar[-, thick, d] & \Image{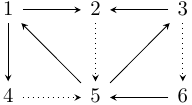} \ar[-, thick, dl] \ar[-, thick, dr] & \Image{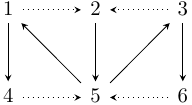} & \Image{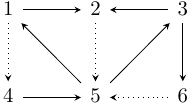} \ar[-, thick, dl] \ar[-, thick, dr] & \Image{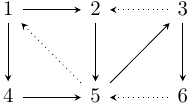} \ar[-, thick, d] \\
			\Image{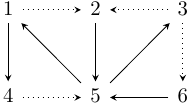} \ar[-, thick, from=urr, crossing over, out=210, in=0] & & \Image{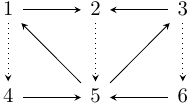} & & \Image{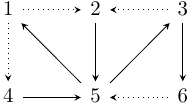} \ar[-, thick, from=ull, crossing over, out=-30, in=180] \\
			& & \Image{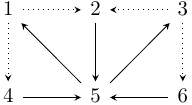} \ar[-, thick, uuull, out=190, in=230, looseness=1.6] \ar[-, thick, ull] \ar[-, thick, uuurr, out=-10, in=-50, looseness=1.6] \ar[-, thick, urr] \ar[-, thick, u]
		\end{tikzcd}
	\end{adjustbox}
		\caption{Mutation lattice for $A_3\times B_2$}\label{figure - Ex 1}
	\end{figure}
\end{myex}

\begin{myex}
	Let $F = \K$ and $G$ be a finite dimensional division algebra over $\K$. Let $S^1$ be the species
	\begin{equation*}
		\begin{tikzcd}
			F \ar[r, "F"] & F \ar[r, "F"] & F \ar[d, "F"] & F \ar[l, "F", swap] & F \ar[l, "F", swap] \\
			& & F
		\end{tikzcd}
	\end{equation*}
	of type $E_6$ and $S^2$ be the species
	\begin{equation*}
		\begin{tikzcd}
			G \ar[r, "G"] & G \ar[r, "G"] & F \ar[r, "F"] & F
		\end{tikzcd}
	\end{equation*}
	of type $F_4$. Both $S^1$ and $S^2$ are $6$-homogeneous by Corollary~\ref{Corollary - l homogeneous species} and representation finite by Theorem~\ref{theorem - dlab ringel}. Hence $T(S^1)\otimes_\K T(S^2)$ is a $2$-representation finite $\K$-algebra. By applying the canonical isomorphisms $F\otimes_F F \cong F$ and $F\otimes_F G \cong G$ and applying Proposition~\ref{proposition - preprojective algebra of tensor species} we get $\Pi_3(T(S^1)\otimes_\K T(S^2))\cong \mathcal{P}(S, W)$. Since $M_\alpha = G$ if and only if $D_{s(\alpha)} = G$ or $D_{t(\alpha)} = G$ for $\alpha\in Q_1$ and $M_\alpha = F$ otherwise, we can omit writing out $M_\alpha$ in the diagram of $S$. The species $S$ is given as in Figure~\ref{figure - species e6 x f4} over the quiver in Figure~\ref{figure - quiver e6 x f4}.
	\begin{figure}
	\begin{minipage}[c]{0.45\linewidth}
		\includegraphics[width=\linewidth]{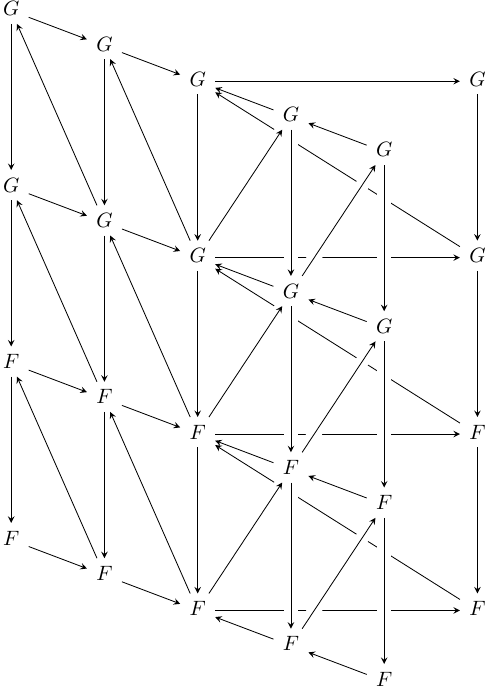}
	\caption{Species of type \\ $E_6\times F_4$}\label{figure - species e6 x f4}
	\end{minipage}
	\hfill
	\begin{minipage}[c]{0.45\linewidth}
		\includegraphics[width=\linewidth]{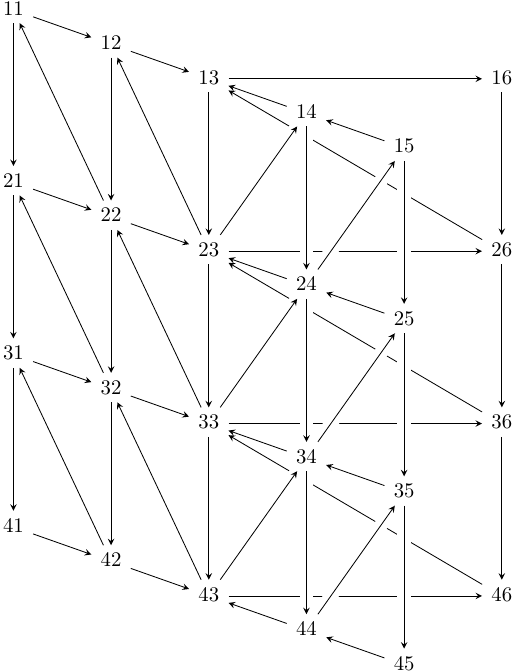}
	\caption{Quiver of type $E_6\times F_4$}\label{figure - quiver e6 x f4}
	\end{minipage}
	\end{figure}
	Using the notation that $\lsub{t(\alpha)}a_{s(\alpha)}\in M_\alpha$ for $\alpha\in Q_1$, the potential $W$ is
	\begin{equation*}
		\begin{aligned}
			W =& \sum_{\substack{i\in \{2, 4\} \\ j\in \{2, 3\}}} \left( \lsub{(i-1)j}1_{(i-1)(j-1)} \otimes \lsub{(i-1)(j-1)}1^*_{ij}\otimes \lsub{ij}1_{(i-1)j} - \lsub{i(j-1)}1_{(i-1)(j-1)} \otimes \lsub{(i-1)(j-1)}1^*_{ij}\otimes \lsub{ij}1_{i(j-1)} \right) + \\
			&+ \sum_{\substack{i\in \{2, 4\} \\ j\in \{3, 4\}}} \left( \lsub{(i-1)j}1_{(i-1)(j+1)} \otimes \lsub{(i-1)(j+1)}1^*_{ij}\otimes \lsub{ij}1_{(i-1)j} - \lsub{i(j+1)}1_{(i-1)(j+1)} \otimes \lsub{(i-1)(j+1)}1^*_{ij}\otimes \lsub{ij}1_{i(j+1)} \right) + \\
			&+\sum_{i\in \{2, 4\}} \left( \lsub{(i-1)6}1_{(i-1)3} \otimes \lsub{(i-1)3}1^*_{i6}\otimes \lsub{i6}1_{(i-1)6} - \lsub{i3}1_{(i-1)3} \otimes \lsub{(i-1)3}1_{i6}\otimes \lsub{i6}1_{i3} \right) + \\
			&+ \sum_{j\in \{2, 3\}} \left( \lsub{2j}1_{2(j-1)} \otimes \lsub{2(j-1)}1^*_{3j}\otimes \lsub{3j}1_{2j} + \lsub{2j}1_{2(j-1)} \otimes \lsub{2(j-1)}i^*_{3j}\otimes \lsub{3j}i_{2j} - \lsub{3(j-1)}1_{2(j-1)} \otimes \lsub{2(j-1)}1^*_{3j}\otimes \lsub{3j}1_{3(j-1)} \right) + \\
			&+ \sum_{j\in \{3, 4\}} \left( \lsub{2j}1_{2(j+1)} \otimes \lsub{2(j+1)}1^*_{3j}\otimes \lsub{3j}1_{2j} + \lsub{2j}1_{2(j+1)} \otimes \lsub{2(j+1)}i^*_{3j}\otimes \lsub{3j}i_{2j} - \lsub{3(j+1)}1_{2(j+1)} \otimes \lsub{2(j+1)}1^*_{3j}\otimes \lsub{3j}1_{3(j+1)} \right) + \\
			&+ \left( \lsub{26}1_{23} \otimes \lsub{23}1^*_{36}\otimes \lsub{36}1_{26} + \lsub{26}1_{23} \otimes \lsub{23}i^*_{36}\otimes \lsub{36}i_{26} - \lsub{33}1_{23} \otimes \lsub{23}1_{36}\otimes \lsub{36}1_{23} \right).
		\end{aligned}
	\end{equation*}
	To clarify, $Q_2$ consists of all $3$-cycles in $Q$. Choosing cuts yields $2$-representation finite $\K$-algebras. One may compute that there are a total of $16599$ of cuts for $(S, W)$. We can for example choose the cut $C$ which consist of all diagonal arrows, illustrated in Figure~\ref{figure - cut diagonal arrows}. Then the truncated Jacobian algebra $\mathcal{P}(S, W, C) \cong T(S^1)\otimes_\K T(S^2)$. Another example of a cut would be to pick all vertical arrows. It is straightforward to check that it is a cut, and therefore the truncated Jacobian algebra will be a $2$-representation finite $\K$-algebra. The cut is illustrated in Figure~\ref{figure - vertical arrow cut}.
	\begin{figure}
	\begin{minipage}[c]{0.45\linewidth}
		\includegraphics[width=\linewidth]{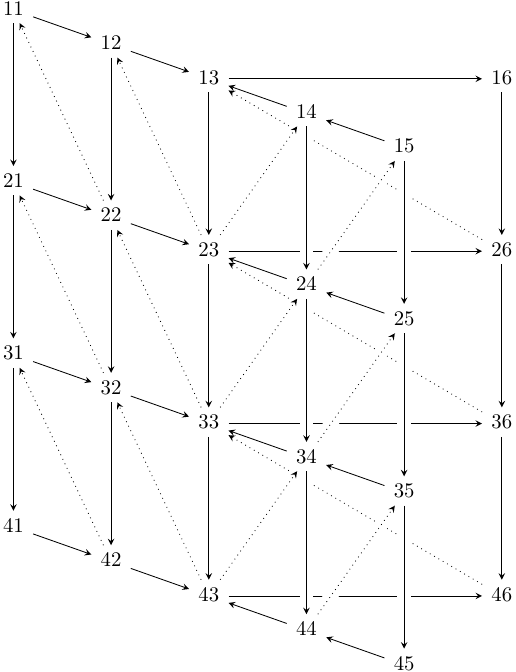}
	\caption{Cut with all diagonal arrows}\label{figure - cut diagonal arrows}
	\end{minipage}
	\hfill
	\begin{minipage}[c]{0.45\linewidth}
		\includegraphics[width=\linewidth]{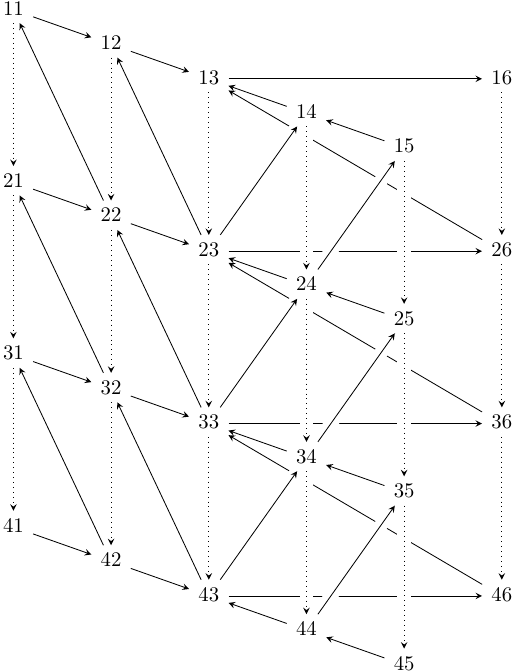}
	\caption{Cut with all vertical arrows}\label{figure - vertical arrow cut}
	\end{minipage}
	\end{figure}
\end{myex}

\clearpage
\bibliographystyle{alpha}
\bibliography{References}
\end{document}